\renewcommand\theequation{\thesection.\arabic{equation}}
\newcommand{\BC}{{\mathbb {C}}}
\newcommand{\BF}{{\mathbb {F}}}
\newcommand{\BQ}{{\mathbb {Q}}}
\newcommand{\BZ}{{\mathbb {Z}}}
\newcommand{\CH}{{\mathcal {H}}}
\newcommand{\CJ}{{\mathcal {J}}}
\newcommand{\CW}{{\mathcal {W}}}
\newcommand{\FY}{{\mathfrak {Y}}}
\newcommand{\Fm}{{\mathfrak {m}}}
\newcommand{\Fp}{{\mathfrak {p}}}
\newcommand{\ScO}{{\mathscr {O}}}
\newcommand{\ScW}{{\mathscr {W}}}
\newcommand{\ScZ}{{\mathscr {Z}}}
\newcommand{\End}{{\mathrm{End}}}
\newcommand{\GL}{{\mathrm{GL}}}
\newcommand{\Ind}{{\mathrm{Ind}}}
\newcommand{\Spec}{{\mathrm{Spec}}}
\newcommand{\SO}{{\mathrm{SO}}}
\newcommand{\Sp}{{\mathrm{Sp}}}
\newcommand{\Rep}{{\mathrm{Rep}}}
\DeclareMathOperator{\cInd}{c-Ind}
\newcommand{\zlb}{\overline{{\mathbb{Z}_{\ell}}}}
\newcommand{\qlb}{\overline{{\mathbb{Q}_{\ell}}}}
\newcommand{\wt}{\widetilde}
\newcommand{\bs}{\backslash}
\def\diag{{\rm diag}}
\newtheorem{thm}{Theorem}[section]
\newtheorem{cor}[thm]{Corollary}
\newtheorem{lem}[thm]{Lemma}
\newtheorem{prop}[thm]{Proposition}
\newtheorem {ques/conj}[thm]{Question/Conjecture}
\newtheorem{defn}[thm]{Definition}
\newtheorem{rmk}[thm]{Remark}
\newcommand{\Rmnum}[1]{\expandafter\@slowromancap\romannumeral #1@}
\def\bJ{{\boldsymbol{\CJ}}}
\def\bN{{\boldsymbol{N}}}
\def\br{{\boldsymbol{r}}}
\begin{document}
\renewcommand{\theequation}{\arabic{equation}}
\numberwithin{equation}{section}

\title[Local Converse and Descent]{On the Local Converse Theorem and the Descent Theorem in families}

\author{Baiying Liu}
\address{Department of Mathematics\\
Purdue University\\
150 N. University Street\\
West Lafayette, IN 47907 USA}
\email{liu2053@purdue.edu}

\author{Gilbert Moss}
\address{Department of Mathematics\\
University of Utah\\
155 S 1400 E ROOM 233\\
Salt Lake City, UT 84112 USA}
\email{moss@math.utah.edu}

\begin{abstract}
In this paper, we prove an analogue of Jacquet's conjecture on the local converse theorem for $\ell$-adic families of co-Whittaker representations of $\GL_n(F)$, where $F$ is a finite extension of $\BQ_p$ and $\ell \neq p$. We also prove an analogue of Jacquet's conjecture for a descent theorem, which asks for the smallest collection of gamma factors determining the subring of definition of an $\ell$-adic family. These two theorems are closely related to the local Langlands correspondence in $\ell$-adic families.
\end{abstract}

\date{\today}
\subjclass[2000]{Primary 11F70, 22E50; Secondary 11F85.}
\keywords{Local Converse Theorem, co-Whittaker Representations, $\ell$-adic Families}
\thanks{The first named author is partially supported by NSF grant DMS-1702218 and by a start-up fund from the Department of Mathematics at Purdue University. The second named author was supported by the FSMP postdoctoral fellowship in France.}
\maketitle

\section{Introduction}\label{intro}

Let $F$ be a $p$-adic field whose residue field has order $q$, and let $G_n:=\GL_n(F)$. If $A$ is a commutative ring with unit, denote by $\Rep_A(G_n)$ the category of $A[G_n]$-modules which are smooth: the stabilizer of any element is open. Given irreducible generic representations $\pi_1$ and $\pi_2$ in $\Rep_{\BC}(G_n)$, Jacquet, Piatetski-Shapiro, and Shalika in \cite{JPSS83} defined gamma factors $\gamma(\pi_i\times\tau, s,\psi)$, $i=1,2$, for irreducible generic $\tau\in \Rep_{\BC}(G_t)$, a non-trivial additive character $\psi$ of $F$, and a complex variable $s$. If $\pi_1$ is isomorphic to $\pi_2$, then 
$$\gamma(\pi_1\times \tau,s,\psi) = \gamma(\pi_2\times \tau,s,\psi)\,,$$
 for all irreducible generic $\tau\in \Rep_{\BC}(G_t)$, for all $t\geq 1$. It is a natural problem to identify the smallest collection of representations $\tau$ such that the converse statement holds. In \cite{JL70} and \cite{JPSS79}, it was shown that when $n=2$ and $3$ respectively, the implication $$\gamma(\pi_1\times \tau,s,\psi) = \gamma(\pi_2\times \tau,s,\psi)\implies \pi_1\cong\pi_2$$ holds even when $\tau$ runs only over characters of $G_1$. In \cite{H93}, it was shown that for general $n$, the same implication holds when $\tau$ runs only over irreducible generic objects in $\Rep_{\BC}(G_t)$ for $t=1, 2, \ldots, n-1$.
 In \cite{Ch96, Ch06, CPS99, HO15}, the range of $t$ was improved to be $t=1, 2, \ldots, n-2$. 
 In general, the converse statement was conjectured by Jacquet to hold when $t$ varies from $1$ to $\lfloor \frac{n}{2}\rfloor$. This conjecture was recently proved by Chai in \cite{Ch16}, and by Jacquet and the first named author in \cite{JL16}, independently, using different methods. Jiang, Nien and Stevens (\cite{JNS15}) also proposed an approach towards Jacquet's conjecture based on the construction of supercuspidal representations in \cite{BK93}.
 Following this approach, a large part of Jacquet's conjecture is proved in \cite{JNS15}, and a combination of the results in \cite{JNS15, ALSX16} proves Jacquet's conjecture for $G_n$, $n$ prime.
 The analogue of Jacquet's conjecture for irreducible generic representations of $\GL_n$ over Archimedean fields is proved by 
 Adrian and Takeda in \cite{AT17}. 
 There is also an analogue of Jacquet's conjecture for irreducible generic representations of $\GL_n$ over finite fields proved by Nien in \cite{N14}. 
Local converse problems for groups other $\GL_n$ are studied in \cite{B95, B97} ($\mathrm{U}(2,1)$, $\Sp_4$), \cite{JS03} ($\SO_{2n+1}$), \cite{Z15, Z17a, Z17b, Z17c} ($\mathrm{U}(1,1)$, $\mathrm{U}(2,2)$, $\Sp_{2r}$, 
 $\mathrm{U}_{r,r}$, $\mathrm{U}_{2r+1}$).

Let $\ell\neq p$ be a prime number, let $k$ be an algebraically closed field of characteristic $\ell$, and let $W(k)$ be the ring of Witt vectors of $k$. That is, $W(k)$ is the smallest complete discrete valuation ring of characteristic zero whose residue field is $k$ (for instance, if $k=\overline{\BF_{\ell}}$ then $W(k)$ is isomorphic to the $\ell$-adic completion of the ring of integers in the maximal unramified extension of $\BQ_{\ell}$). Let $A$ be a Noetherian $W(k)$-algebra. An object in $\Rep_A(G_n)$ is an $\ell$-adic family of representations in the sense of algebraic geometry: given $\Fp\in \Spec(A)$ with residue field $\kappa(\Fp):=A_{\Fp}/\Fp A_{\Fp}$, the fiber $V\otimes_A\kappa(\Fp)$ gives a representation of $G_n$ on a $\kappa(\Fp)$-vector space. In this paper we follow the method in \cite{JL16} to prove two analogues of Jacquet's conjecture in the setting of $\ell$-adic families.

If $\pi$ is a simple $A[G_n]$-module, then for any ideal $I$ of $A$, $I\cdot\pi$ is either $0$ or all of $\pi$, so to have families that encode congruences, we do not use irreducible representations as our basic objects. Ihara's Lemma, and its conjectural generalization beyond $\GL_2$, imply that for representations arising in the cohomology of Shimura varieties, all irreducible subrepresentations of the contragredient are generic after taking the fiber at a maximal ideal of the global Hecke algebra. This motivates us to work with ``co-Whittaker" objects, that is, representations in $\Rep_A(G_n)$ that are generic with multiplicity one, admissible, and such that every nonzero quotient is generic (see Section~\ref{section:cowhittaker} for precise definitions). 

For any co-Whittaker representation $\pi$ in $\Rep_A(G_n)$, we define its Whittaker model $\CW(\pi,\psi)$ as the image of any nonzero homomorphism $\pi \to \Ind_{U_n}^{G_n}\psi_A$, where $\psi:F\to W(k)^{\times}$ is a nontrivial character extended to the group $U_n$ of upper triangular matrices, and $\psi_A = \psi \otimes_{W(k)} A$. Not being irreducible, there may be several non-isomorphic co-Whittaker representations with the same Whittaker model. Two co-Whittaker representations $\pi_1$, $\pi_2$ are equivalent (in the sense of Section~\ref{section:cowhittaker}) if and only if they have have the same Whittaker model -- this amounts to saying their ``supercuspidal supports'' are the same (see Section~\ref{section:cowhittaker} and Lemma \ref{equivalencecriterion}). In \cite{M16}, the second named author constructed gamma factors $\gamma(\pi\times\tau,X,\psi)$ in $(A\otimes B)((X))$, where $\pi\in \Rep_A(G_n)$ and $\tau\in\Rep_B(G_t)$ are co-Whittaker, $A$, $B$ are arbitrary Noetherian $W(k)$-algebras, and $X$ is a formal variable, see Section \ref{section:gamma} for more details. 
When $A =\mathbb{C}$, let $X = q^{-s+\frac{n-t}{2}}$, then $\gamma(\pi \times \tau, X, \psi)$ is exactly the gamma factor defined in \cite{JPSS83}.
The local converse theorem for $t=n-1$ is proven in \cite{M16}. Our first main result is proving an analogue of Jacquet's conjecture for co-Whittaker representations.

\begin{thm}\label{main1}
Let $A$ be a reduced, $\ell$-torsion free, finite-type $W(k)$-algebra and let $\pi_1$, $\pi_2$ be co-Whittaker $A[\GL_n(F)]$-modules with the same central character. If
$$\gamma(\pi_1\times \tau,X,\psi)=\gamma(\pi_2\times \tau,X,\psi)\,,$$ 
for all irreducible generic integral representations $\tau\in \Rep_{\qlb}(G_t)$ with $1\leq t\leq \lfloor\frac{n}{2}\rfloor$, then $\CW(\pi_1,\psi)=\CW(\pi_2,\psi)$ (equivalently, $\pi_1$ and $\pi_2$ have the same supercuspidal support; see Section~\ref{section:cowhittaker}).
\end{thm}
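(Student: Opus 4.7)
The plan is to reduce Theorem~\ref{main1} to the classical Jacquet conjecture for irreducible generic $\qlb[G_n]$-representations, which is the transfer (via an abstract field isomorphism $\qlb \cong \BC$) of the results of \cite{Ch16, JL16}. Concretely, one localizes at generic points of $\Spec A$, embeds the resulting fibers into $\qlb$, and applies the classical theorem to the cosocles of the fibers.

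The first step is a fiber reduction. Since $A$ is reduced and $\ell$-torsion free, the natural map $A \hookrightarrow \prod_{\Fp} \kappa(\Fp)$ where $\Fp$ ranges over the minimal primes of $A$ is injective, and each residue field $\kappa(\Fp)$ has characteristic zero and embeds into $\qlb$. I would then verify that both the Whittaker model $\CW(\pi_i,\psi) \subset \Ind_{U_n}^{G_n} \psi_A$ and the gamma factor $\gamma(\pi_i \times \tau, X, \psi)$ behave well under base change $A \to \kappa(\Fp) \hookrightarrow \qlb$. Granting this, the hypothesis of equal gamma factors transfers to each characteristic-zero fiber, and it suffices to prove the Whittaker-model equality at each such fiber.

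Second, over $\qlb$ each co-Whittaker module $\pi_i'$ has a unique irreducible generic subquotient (its cosocle); the gamma factor $\gamma(\pi_i' \times \tau, X, \psi)$ agrees with the gamma factor of the cosocle, and $\CW(\pi_i',\psi)$ is determined by the supercuspidal support of the cosocle. Thus the problem reduces to showing that two irreducible generic $\qlb[G_n]$-representations with matching gamma factors against all integral generic $\tau$ for $1 \le t \le \lfloor n/2\rfloor$ are isomorphic. To replace ``integral'' by ``arbitrary'' in the hypothesis, I would observe that every irreducible generic $\tau$ admits an unramified twist that is integral, and unramified twists are absorbed into the formal variable $X$ (a shift $X \mapsto q^s X$ corresponds to twisting by $|\det|^s$), so identities of rational functions in $X$ for all integral test representations imply the same for all generic $\tau$. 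The classical theorem of \cite{Ch16, JL16} then applies to conclude that the cosocles are isomorphic, yielding the equality of Whittaker models.

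The main obstacle, in my view, is the fiber reduction: carefully verifying that equality of Whittaker submodules in $\Rep_A(G_n)$ can be detected fiber-by-fiber at generic points of $\Spec A$. This depends on flatness and base-change compatibility for the Whittaker functor and for the co-Whittaker modules themselves, together with compatibility of the gamma factor construction of \cite{M16} with change of coefficients. A related technical subtlety is that the hypothesis pairs $\pi_i \in \Rep_A(G_n)$ with $\tau \in \Rep_{\qlb}(G_t)$, so one must make sense of the gamma factor in $(A \otimes_{W(k)} \qlb)((X))$ and track its localization behavior along primes of $A$. Once these bookkeeping issues are dispatched, the geometric reduction to generic fibers together with the classical theorem completes the argument.
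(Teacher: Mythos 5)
Your proposal breaks on the step where you try to upgrade the ``integral $\tau$'' hypothesis to the ``all $\tau$'' hypothesis needed by the classical theorem of Chai and Jacquet--Liu. The claim that every irreducible generic $\qlb$-representation of $G_t$ has an integral unramified twist is false. An irreducible $\qlb$-representation of $G_t$ is integral if and only if each supercuspidal factor in its support is integral, and twisting by an unramified character shifts \emph{all} factors simultaneously. For a concrete counterexample, take $t=2$ and the unramified principal series $\tau = i_B^{G_2}(\chi_1\otimes\chi_2)$ with $\chi_1(\varpi)=1$ and $\chi_2(\varpi)=\ell$; this is irreducible and generic (since $\ell^{\pm 1}\neq q^{\pm 1}$), yet no unramified twist is integral: integrality would require $\alpha\in\zlb^\times$ and $\ell\alpha\in\zlb^\times$ simultaneously for the twist parameter $\alpha$, which is impossible. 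So after passing to a fiber $\kappa(\Fp)$, the hypothesis you are able to transport gives equality of gamma factors only for integral test representations, which is strictly weaker than what \cite{Ch16,JL16} require. In fact, the remark after the theorem in the paper observes that the integral-only version, even in the single-fiber case $A=\BC$, is already a strengthening of the classical Jacquet conjecture; it cannot be \emph{derived} from the classical statement by any twisting device, so a proof by reduction to fibers plus the classical theorem cannot work.

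The paper does not reduce to fibers for the main argument. Instead, it runs the double-coset (``height'') induction of \cite{JL16} directly over the ring $A$, using the decomposition $G_n = \dot\bigcup_{i=0}^{n-1} U\alpha^i P$: Lemma~\ref{lem7} gets agreement at height $t$ from hypothesis $\CH_t$, and Proposition~\ref{prop3} propagates agreement up through heights $[n/2]$ to $n-1$ via the functional equation and Fourier inversion over $W(k)$-algebras. The crucial technical input that replaces the classical harmonic analysis is Theorem~\ref{vanishingthm} (from \cite{M16}), which shows that a nonzero element of $\cInd_{U_n}^{G_n}\psi_A$ already pairs nontrivially with a Whittaker function of some \emph{integral} $\qlb$-representation; this is proved via the integral Bernstein center, not Plancherel theory, and is exactly where the hypotheses on $A$ (reduced, $\ell$-torsion free, finite-type) enter. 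That vanishing result is what makes the integral-only hypothesis sufficient, and it has no substitute in your reduction strategy. You correctly identify that the fiber/base-change bookkeeping matters, but the real difficulty you would face after that step is recovering a converse theorem over $\qlb$ with only integral test representations, and that is the content of the paper's argument rather than a corollary of the classical one.
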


Recall that $\tau\in \Rep_{\qlb}(G_t)$ is \emph{integral} if it contains a stable $\zlb$-sublattice. Choose an isomorphism $\BC \cong \qlb$. Then in the special case of Theorem~\ref{main1} where $A=\BC$, we obtain a slightly stronger form of Jacquet's conjecture, where $\tau$ need only vary over irreducible generic representations that are integral. 

If $n\geq2$, we prove in Proposition~\ref{centralcharacter} that the condition on the central character is unnecessary, namely, the equalities of $\GL_1$-twist $\gamma$-factors imply $\pi_1$ and $\pi_2$ have the same central character. This is the analogue of \cite[Corollary 2.7]{JNS15}. 

Local converse theorems are especially useful in connection with the local Langlands correspondence. Converse theorems were used in \cite{H02} to show that equalities of twisted local factors uniquely characterize the local Langlands correspondence for $\GL_n$. Motivated by Ihara's Lemma and local-global compatibility results for $\GL_2$, Emerton and Helm conjectured in \cite{EH-families} a local Langlands correspondence for $\GL_n$ in $\ell$-adic families. Their conjecture assigns a co-Whittaker family $\pi(\rho)$ in $\Rep_A(G_n)$ to every $\ell$-adically continuous $A$-representation $\rho$ of the Weil group $W_F$. The family $\pi(\rho)$ ``interpolates local Langlands'' in the sense that, at generic points $\Fp$ of $\Spec(A)$, the fiber $\pi(\rho)\otimes_A\kappa(\Fp)$ corresponds to $\rho\otimes_A\kappa(\Fp)$ under a certain normalization of the classical local Langlands correspondence. They proved that, if $\pi(\rho)$ exists, it is the unique co-Whittaker family satisfying this interpolation property at the generic points. Recently, Helm and the second author (\cite{curtis, HM16}) proved the existence of $\pi(\rho)$, using local converse theorem and descent techniques in the co-Whittaker setting. Sharpening the techniques of \cite{M16} and \cite{HM16} to the level of Theorem~\ref{main1} will be useful in proving local Langlands correspondences in families beyond $\GL_n$.

We say a co-Whittaker $A[G_n]$-module \emph{descends} to a sub-$W(k)$-algebra $A'\subset A$ if there is a co-Whittaker $A'[G_n]$-module $\pi'$ such that $\pi$ is equivalent to $\pi'\otimes_{A'}A$. For $\pi\in \Rep_A(G_n)$ and $\tau\in \Rep_B(G_t)$ co-Whittaker, $\gamma(\pi\times\tau,X,\psi)$ defines an element of $(A\otimes B)((X))$, and if $\pi$ descends to $A'$, then $\gamma(\pi\times\tau,X,\psi)$ must have coefficients in the subring $A'\otimes B$. It is a natural problem to identify collections of representations $\tau$ over rings $B$ such that the converse statement holds. In \cite{HM16}, converse theorem techniques are used to prove a local gamma factor descent theorem with $t=n-1$, with $\tau$ being the compact induction $W_t:=\cInd_{U_t}^{G_t}\psi$, and with $B$ being $\ScZ_t$, the center of the category $\Rep_{W(k)}(G_{t})$. Our second main result sharpens this theorem to achieve another analogue of Jacquet's conjecture for descent.

\begin{thm}
\label{descent:intro}
Let $A$ be any Noetherian $W(k)$-algebra, let $A'\subset A$ be a sub-algebra, and suppose $\pi$ is a co-Whittaker $A[G_n]$-module whose central character is valued in $A'$. Assume $A$ is finitely generated as a module over $A'$. If $\gamma(\pi\times e'W_t,X,\psi)$ has coefficients in $A'\otimes e'\ScZ_t$ for all primitive idempotents $e'$ of $\ScZ_t$, and for $t=1,2\dots,\lfloor\frac{n}{2}\rfloor$, then $\pi$ descends to $A'$ (equivalently, the supercuspidal support of $\pi$ is valued in $A'$).
\end{thm}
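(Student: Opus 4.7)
The plan is to reduce Theorem~\ref{descent:intro} to Theorem~\ref{main1}, following and sharpening the strategy of \cite{HM16} (which handled the $t = n-1$ case). I would form $B := A\otimes_{A'} A$, Noetherian because $A$ is finitely generated as an $A'$-module, equipped with the two natural maps $p_1, p_2 : A \rightrightarrows B$, and introduce the pullbacks $\pi_i := \pi \otimes_{A, p_i} B$, which are co-Whittaker $B[G_n]$-modules. The first goal is to prove the equivalence $\pi_1 \equiv \pi_2$ of co-Whittaker $B$-modules; this is equivalent to saying the supercuspidal support map $\phi_\pi : \ScZ_n \to A$ of $\pi$ satisfies $p_1 \circ \phi_\pi = p_2 \circ \phi_\pi$, from which one concludes via a descent step (as in \cite{HM16}, using the finite generation of $A$ over $A'$) that $\phi_\pi(\ScZ_n) \subset A'$, i.e., $\pi$ descends to $A'$.

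To prove $\pi_1 \equiv \pi_2$ I would verify the hypotheses of Theorem~\ref{main1} for them. The central character of $\pi$ lies in $A'$, so $\pi_1$ and $\pi_2$ have the same central character over $B$. The assumption that the coefficients of $\gamma(\pi \times e'W_t, X, \psi)$ lie in $A' \otimes e'\ScZ_t$, combined with the equality $p_1|_{A'} = p_2|_{A'}$ and the base-change compatibility of gamma factors from \cite{M16}, gives
\[
\gamma(\pi_1 \times e'W_t, X, \psi) \;=\; \gamma(\pi_2 \times e'W_t, X, \psi)
\]
in $(B \otimes e'\ScZ_t)((X))$ for every primitive idempotent $e'$ of $\ScZ_t$ and every $t \leq \lfloor n/2 \rfloor$. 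Specializing along the $\qlb$-valued point of $\Spec(e'\ScZ_t)$ attached to the supercuspidal support of any integral irreducible generic $\tau \in \Rep_{\qlb}(G_t)$ in the Bernstein block of $e'$, and using that gamma factors of co-Whittaker pairs depend only on the supercuspidal support in each variable, one obtains $\gamma(\pi_1 \times \tau, X, \psi) = \gamma(\pi_2 \times \tau, X, \psi)$ in $(B \otimes \qlb)((X))$ for all such $\tau$.

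The main obstacle is that $B$ need not be reduced, $\ell$-torsion-free, or finite-type over $W(k)$, so Theorem~\ref{main1} cannot be invoked on $B$ directly. I would circumvent this by testing the equivalence $\pi_1 \equiv \pi_2$ after every base change $B \to R$ to a reduced, $\ell$-torsion-free, finite-type $W(k)$-algebra $R$ (for instance, reductions of finite-type $W(k)$-subquotients of $B$ at its $\qlb$-geometric points). For each such $R$, Theorem~\ref{main1} applied to $\pi_1 \otimes_B R$ and $\pi_2 \otimes_B R$ gives their equivalence over $R$, so the two induced supercuspidal support maps $\ScZ_n \to B$ agree after composing with every such $B \to R$. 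A sufficiently rich family of test algebras separates elements of $B$, so the two maps $\ScZ_n \to B$ coincide on the nose, yielding $\pi_1 \equiv \pi_2$ and completing the reduction to the descent conclusion of the first paragraph. The subtlest step will be this final assembly, where pointwise gamma-factor equalities must be promoted to equivalence at the level of the possibly non-reduced ring $B$, and the resulting equalizer statement must be converted into honest descent of $\pi$ to $A'$ in the absence of faithful flatness of $A$ over $A'$.
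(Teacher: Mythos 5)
Your proposal takes a genuinely different route from the paper, but it has a fatal gap. The paper does not reduce Theorem~\ref{descent:intro} to Theorem~\ref{main1} at all: it runs a height-by-height induction (Propositions~\ref{chen3.1analogue} and~\ref{JL3.6analogue}) directly on the $A$-valued Whittaker functions of $\pi$, showing at each stage that $W_v(\alpha^t)\in A'$ for $v\in V_0'$, and replaces the vanishing lemma \ref{vanishinglem} by the duality statement \ref{dualityprop} (from \cite{HM16}), replaces Fourier inversion by a Fourier ``descent'' formula (Lemma~\ref{fourierinversiondescent}), and uses Proposition~\ref{rationalityfinite} to maintain rationality across the functional equation. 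Crucially, every step works for \emph{arbitrary} Noetherian $W(k)$-algebras $A$, because the vanishing-type input is Theorem~\ref{dualityprop} rather than Theorem~\ref{vanishingthm}, and the latter is where the reduced, $\ell$-torsion-free, finite-type hypotheses of Theorem~\ref{main1} come from.

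The problem with your reduction is exactly that mismatch of hypotheses. You form $B=A\otimes_{A'}A$ and want to conclude that the two supercuspidal-support maps $\ScZ_n\rightrightarrows B$ agree by testing after maps $B\to R$ into reduced, $\ell$-torsion-free, finite-type $W(k)$-algebras $R$ (so that Theorem~\ref{main1} applies). But the hypothesis of Theorem~\ref{descent:intro} allows \emph{any} Noetherian $W(k)$-algebra $A$ --- in particular $A$ may have $\ell$-torsion (e.g.\ $A=k$) or nilpotents, and $B$ is generally worse than $A$. Every map $B\to R$ with $R$ reduced and $\ell$-torsion-free kills all nilpotents and all $\ell$-torsion in $B$, so such $R$ do \emph{not} separate elements of $B$: a nilpotent or $\ell$-torsion discrepancy between the two supercuspidal-support maps would be invisible to your test family. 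The ``sufficiently rich family of test algebras'' you invoke does not exist in the generality required, so the assembly step from pointwise gamma-factor equalities to $\pi_1\equiv\pi_2$ over $B$ breaks down. (You also flag, correctly, that passing from the equalizer condition $p_1\circ\phi_\pi = p_2\circ\phi_\pi$ to $\phi_\pi(\ScZ_n)\subset A'$ is not automatic without faithful flatness of $A$ over $A'$; the hypothesis is only module-finiteness, and that step would need its own argument.) To make a version of this strategy work you would first have to extend Theorem~\ref{main1}/Theorem~\ref{vanishingthm} beyond the reduced, $\ell$-torsion-free, finite-type case --- precisely the extension the authors say they leave to future work --- at which point the direct height-by-height argument the paper already gives is both simpler and already sufficient.
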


If $n\geq 2$, we prove in Proposition~\ref{centralcharactervaluedA'} that the condition on the central character is automatically implied by the gamma factor condition for $t=1$. 

The theory of gamma factors for $\ell$-adically continuous families of representations of $W_F$ was developed in \cite{HM15}. We remark that, by applying the local Langlands correspondence in families, as formulated in \cite{HM16}, the converse and descent theorems stated here immediately give  analogous theorems for gamma factors of $\ell$-adically continuous families of representations of the Weil group $W_F$.
We briefly state these results as follows, referring the reader to \cite[\S 2]{HM15} for all notations and definitions. 
An $\ell$-inertial type $\nu$ is a finite dimensional representation of the prime-to-$\ell$ part of inertia. To any $n$-dimensional $\ell$-inertial type $\nu$, there is associated a primitive idempotent $e_{\nu}$ of $\ScZ_n$ via Vigneras' mod-$\ell$ semisimple local Langlands correspondence (\cite{V01}, \cite[Prop 10.1]{curtis}). There is a ring $R^{\nu}$ and a universal $\ell$-adically continuous representation $\rho_{\nu}:W_F\to \GL_n(R^{\nu})$ corresponding under local Langlands in families to $e'_{\nu}W_n$.

\begin{cor}
\label{corollary:parameter}
\begin{enumerate}
\item Let $A$ be a reduced, $\ell$-torsion free, finite-type $W(k)$-algebra, and let $\rho_1,\rho_2:W_F\to \GL_n(A)$ be $\ell$-adically continuous representations in the sense of \cite[\S2]{HM15} with the same determinant. If
$$\gamma(\rho_1\otimes \sigma,X,\psi)=\gamma(\rho_2\otimes \sigma,X,\psi)\,,$$ for all $\ell$-adically continuous representations $\sigma:W_F\to \GL_t(\zlb)$ with $1\leq t\leq \lfloor\frac{n}{2}\rfloor$, then $\rho_1$ and $\rho_2$ have the same semisimplification.
\item Let $A$ be any Noetherian $W(k)$-algebra, let $A'\subset A$ be a sub-algebra, and suppose $\rho:W_F\to \GL_n(A)$ is an $\ell$-adically continuous representation whose determinant character is valued in $A'$. Assume $A$ is finitely generated as a module over $A'$. If $\gamma(\rho\otimes \rho_{\nu'}, X, \psi)$ has coefficients in $A'\otimes R_{\nu'}$ for all $t$-dimensional $\ell$-inertial types $\nu'$, for $1\leq t\leq \lfloor\frac{n}{2}\rfloor$, then $\rho$ descends to $A'$, that is,
$\rho(W_F)$ lies in $\GL_n(A')$. 
\end{enumerate}
\end{cor}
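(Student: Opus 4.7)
The plan is to deduce both parts of Corollary~\ref{corollary:parameter} from Theorems~\ref{main1} and~\ref{descent:intro} by transferring the hypotheses across the Emerton--Helm local Langlands correspondence in families, whose existence was established by Helm and the second author in \cite{curtis, HM16}. To any $\ell$-adically continuous $\rho: W_F \to \GL_n(A)$ this correspondence attaches a co-Whittaker $A[G_n]$-module $\pi(\rho)$, and the crucial compatibilities to be invoked are: (a) the central character of $\pi(\rho)$ matches the determinant of $\rho$ up to the fixed cyclotomic twist built into the normalization; (b) the supercuspidal support of $\pi(\rho)$ is determined by, and determines, the semisimplification class of $\rho$; and (c) for any $\ell$-adically continuous $\sigma: W_F \to \GL_t(B)$ one has the matching of gamma factors $\gamma(\rho \otimes \sigma, X, \psi) = \gamma(\pi(\rho) \times \pi(\sigma), X, \psi)$ in $(A \otimes B)((X))$, as developed in \cite{HM15, HM16}.

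For part (1), I would first apply the correspondence to obtain $\pi(\rho_1)$ and $\pi(\rho_2)$, noting that $\det \rho_1 = \det \rho_2$ implies they share a central character via (a). As $\sigma: W_F \to \GL_t(\zlb)$ ranges over $\ell$-adically continuous representations, $\pi(\sigma)$ ranges over all irreducible generic integral representations in $\Rep_{\qlb}(G_t)$, so by (c) the hypothesis of Corollary~\ref{corollary:parameter}(1) is precisely the hypothesis of Theorem~\ref{main1}. That theorem yields $\CW(\pi(\rho_1), \psi) = \CW(\pi(\rho_2), \psi)$, equivalently equal supercuspidal supports, and (b) then forces $\rho_1$ and $\rho_2$ to have equal semisimplifications. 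For part (2), recall that primitive idempotents of $\ScZ_t$ are in bijection with $t$-dimensional $\ell$-inertial types $\nu'$ via Vigneras' mod-$\ell$ semisimple local Langlands, and for each such $\nu'$ the correspondence in families identifies $e'_{\nu'} W_t$ with the universal parameter $\rho_{\nu'}$ under an isomorphism $R^{\nu'} \cong e'_{\nu'} \ScZ_t$. Applying (c) with $\sigma = \rho_{\nu'}$, the hypothesis that $\gamma(\rho \otimes \rho_{\nu'}, X, \psi)$ lies in $(A' \otimes R^{\nu'})((X))$ for every $\nu'$ translates exactly into the condition of Theorem~\ref{descent:intro} on $\pi(\rho)$, and the hypothesis on the determinant translates by (a) into the central character of $\pi(\rho)$ being $A'$-valued. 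Theorem~\ref{descent:intro} then furnishes a descent of $\pi(\rho)$ to $A'$, equivalently an $A'$-valued supercuspidal support, and (b) yields that $\rho$ descends to $A'$, i.e. $\rho(W_F) \subset \GL_n(A')$ after a suitable change of basis.

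The main obstacle is not in the deduction, which is formal given the dictionary, but in verifying that the four compatibilities (a)--(c) above are available in sufficient generality, with consistent normalizations, so that no spurious twist in $X$ or mismatch between the integrality and finiteness hypotheses on $A$ appears when passing from the Galois to the automorphic side. Each of these compatibilities is established in \cite{HM15, HM16}, and checking that the normalizations line up with those of \cite{M16} (used in Theorems~\ref{main1} and~\ref{descent:intro}) is the only real work; once that is done, the corollary is immediate.
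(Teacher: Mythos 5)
Your approach is exactly the paper's: the authors themselves treat Corollary~\ref{corollary:parameter} as an immediate consequence of Theorems~\ref{main1} and~\ref{descent:intro} via the local Langlands correspondence in families of \cite{HM16}, with no further argument given. Your (a)--(c) compatibilities correctly identify the dictionary being invoked; the only minor imprecision is the phrase that $\pi(\sigma)$ ``ranges over all irreducible generic integral representations'' --- what is actually needed, and what is true, is that every irreducible generic integral $\tau \in \Rep_{\qlb}(G_t)$ is equivalent to a specialization of $\pi(\sigma)$ for a suitable $\sigma$ valued in $\zlb$, so that the hypothesis of Theorem~\ref{main1} is met by base-change compatibility of the gamma factor.
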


In proving local converse theorems, there is a key vanishing result (Theorem~\ref{vanishingthm} in this paper) that is required to pass from known information on the Rankin--Selberg zeta integrals to desired information about the Whittaker function. This result was originally proven when $A=\BC$ in \cite{JS81} using harmonic analysis, and when $A$ is reduced, $\ell$-torsion free, and finite-type over $W(k)$ in \cite{M16}. The reduced and $\ell$-torsion free hypotheses were required in \cite{M16} to make use of certain algebro-geometric techniques, together with the theory of the integral Bernstein center, which is the reason why they appear in Theorem~\ref{main1}. We hope to remove these hypotheses in future work.

In Theorem~\ref{descent:intro}, the assumption that $A$ is finitely generated as a module over $A'$ is required to make use of \cite[Corollary 4.2]{HM16} (Proposition~\ref{rationalityfinite} in this paper). This technical result is needed because passing through the functional equation requires the gamma factor to be a rational function, whereas the technique of \cite{HM16}, which we exploit here, necessitates working with the gamma factor in its expansion as a power series. We hope to remove this complication in future work.

A natural question following Theorems \ref{main1}, \ref{descent:intro}, and Corollary \ref{corollary:parameter}, is whether the bound $\lfloor\frac{n}{2}\rfloor$ is sharp. 
The sharpness of $\lfloor\frac{n}{2}\rfloor$ for the local converse theorem when $A=\mathbb{C}$ is proved in \cite{ALST16} for the case of $n$ being prime and $p \geq \lfloor \frac{n}{2} \rfloor$. 
Thus, 
for $\ell$-adic families of co-Whittaker representations, we also expect the bound $\lfloor\frac{n}{2}\rfloor$ is sharp. We leave this discussion to future work. 

The structure of this paper is as follows. In section \ref{section:cowhittaker}, we introduce basic properties of co-Whittaker representations. In section \ref{section:gamma}, we briefly recall the theory of gamma factors in \cite{M16} and show that equality of $G_1$-twisted gamma factors implies equality of central characters. In Section 
\ref{section:vanishinglemmas}, we prove two basic lemmas, which play important roles in later sections. Theorem \ref{main1} will be proved in Section \ref{section:proofmain1}, with a proposition whose proof is deferred to  \ref{section:proofofprop3}. Theorem \ref{descent:intro} will be proved in Section \ref{section:descent}. 

\subsection*{Acknowledgements}

The authors would like to thank Matthew Emerton, David Helm, Herv\'{e} Jacquet, Dihua Jiang, and Shaun Stevens for their interest in this work, constant encouragement, and helpful conversations and suggestions.

\section{Co-Whittaker representations}
\label{section:cowhittaker}

Let $G=G_n=\GL_n(F)$. Recall that in $W(k)$, $p$ is invertible, there are all roots of unity of order prime to $\ell$, and we can fix an isomorphism $\overline{W(k)[\frac{1}{\ell}]}\cong\BC$, where $\overline{W(k)[\frac{1}{\ell}]}$ is an algebraic closure of the fraction field of $W(k)$. The base rings $A$ for our families will always have the structure of Noetherian $W(k)$-algebras.

This framework is natural when studying congruences mod $\ell$. For example, if $q\equiv 1$ mod $\ell$, there exist smooth characters $\chi_1, \chi_2:F^{\times}\to \zlb^{\times}$ such that $\chi_1$ is unramified but $\chi_2$ is ramified, and such that $\chi_1\equiv \chi_2$ mod $\Fm$, where $\Fm$ is the maximal ideal of $\zlb$. Let $A$ be the $W(k)$-algebra $\{(a,b)\in \zlb\times\zlb: a\equiv b \mod \Fm\}$. Then the congruence between $\chi_1$ and $\chi_2$ is captured by saying they interpolate in an $\ell$-adic family over $\Spec(A)= \{0\times \Fm, \Fm\times 0, \Fm\cdot A\}$. More precisely, the character $\chi: F^{\times}\to A^{\times}: a\mapsto (\chi_1(a),\chi_2(a))$ satisfies $\chi\otimes_A\kappa(0\times \Fm)\cong \chi_1$, $\chi\otimes_A\kappa(\Fm\times 0)\cong \chi_2$, and $\chi\otimes_A\kappa(\Fm)\cong \chi\mod\Fm$.

Let $B_n=T_nU_n$ be the standard Borel subgroup of $G_n$ consisting of upper triangular matrices with unipotent radical $U_n$ and $T_n$ the group of diagonal matrices. We fix a non-trivial additive character $\psi:F\to W(k)^{\times}$. Define a non-degenerate character $\psi_{U_n}$ on $U_n$ by 
$$\psi_{U_n}(u):=\psi\left(\sum_{i=1}^{n-1}u_{i,i+1}\right)\,,u\in U_n\,.$$
 We will drop the subscript and also refer to $\psi_{U_n}$ as $\psi$. If $A$ is a $W(k)$-algebra, let $\psi_A= \psi \otimes_{W(k)} A$. 

Define $V^{(n)}$ to be the $\psi$-coinvariants $V/V(U_n,\psi)$, where $V(U_n,\psi)$ is the submodule generated by $\{\psi(u)v-uv:u\in U_n,v\in V\}$. This functor is exact and, for any $A$-module $M$ there is a natural isomorphism 
$$(V\otimes_AM)^{(n)}\cong V^{(n)}\otimes_AM\,.$$

\begin{defn}
A smooth $A[G_n]$-module $V$ is co-Whittaker if the following conditions hold
\begin{enumerate}
\item $V$ is admissible as an $A[G_n]$-module,
\item $V^{(n)}$ is a free $A$-module of rank one,
\item if $Q$ is a quotient of $V$ such that $Q^{(n)}=0$, then $Q=0$.
\end{enumerate}
\end{defn}

For example, when $A=\BC$, $n=2$, and $B$ is the Borel subgroup, the normalized parabolic induction $i_B^G(\chi)$, where $\chi=\chi_1\otimes\chi_2$ is a character of $T$, is co-Whittaker. 

For another example, if $\chi_1$ and $\chi_2$ vary over unramified characters, this defines a geometric family over $\Spec(\BC[T_1^{\pm 1},T_2^{\pm 1}])$. More precisely, let $\chi_{univ}:T\to \BC[T_1^{\pm 1},T_2^{\pm 1}]^{\times}$ send $\diag(a,b)$ to $T_1^{v_F(a)}T_2^{v_F(b)}$, and consider the normalized induction $i_B^G(\chi_{univ})$. On points $\Fp=(T_1-x_1,T_2-x_2)$, the fiber $i_B^G(\chi_{univ})\otimes\kappa(\Fp)$ is irreducible on the open subset of points where $x_1x_2^{-1}\neq q^{\pm 1}$. At points where $x_1x_2^{-1}= q^{\pm 1}$ the fiber $i_B^G(\chi_{univ})\otimes\kappa(\Fp)$ is reducible, but has a unique irreducible generic quotient, which is a twist of the Steinberg representation by an unramified character of $F^{\times}$. It follows that $i_B^G(\chi_{univ})\otimes\kappa(\Fp)$ is co-Whittaker as a $\BC[T_1^{\pm 1},T_2^{\pm 1}][G_2]$-module.

If $V$ and $V'$ are co-Whittaker, any nonzero $G$-equivariant map $V\to V'$ is surjective, as otherwise the cokernel would be a nongeneric quotient. In this case $V$ is said to dominate $V'$. We say $V$ and $V'$ are {\it equivalent} if there exists a co-Whittaker $A[G_n]$-module $V''$ dominating both $V$ and $V'$. This is an equivalence relation on isomorphism classes of co-Whittaker modules.

In \cite{h_whitt}, Helm constructs a co-Whittaker module which is ``universal'' up to this notion of equivalence. The key tool is the integral Bernstein center of $G_n$, i.e. the center of the category $\Rep_{W(k)}(G_n)$.

The center of an abelian category is the endomorphism ring of the identity functor, in other words the ring of natural transformations from the identity functor to itself. It acts on every object in the category in a way compatible with all morphisms. We denote by $\ScZ_n$ the center of $\Rep_{W(k)}(G_n)$. 

For any co-Whittaker $A[G_n]$-module $V$, the map $A\to \End_{A[G_n]}(V)$ is an isomorphism (c.f. \cite[Prop 6.2]{h_whitt}), and thus there exists a map $f_V:\ScZ_n \to A$, which we call the \emph{supercuspidal support} of $V$. Note that $V$ also admits a central character $\omega_V:F^{\times}\to A^{\times}$.

A primitive idempotent $e$ of $\ScZ_n$ gives rise to a direct factor category $e\Rep_{W(k)}(G_n)$, which is the full subcategory of $\Rep_{W(k)}(G_n)$ on which $e$ acts as the identity. As described in \cite{h_bern}, the primitive idempotents in $\ScZ_n$ are in bijection with inertial equivalence classes of pairs $(L,\pi)$, where $L$ is a Levi subgroup of $G_n$ and $\pi$ is an irreducible supercuspidal $k$-representation of $L$. If $e$ is the idempotent corresponding to the pair $(L,\pi)$, then a representation $V$ in $\Rep_{W(k)}(G_n)$ lies in $e\Rep_{W(k)}(G_n)$ if and only if every simple subquotient of $V$ has mod-$\ell$ inertial supercuspidal support given by $(L,\pi)$ in the sense of \cite[Def 4.12]{h_bern}.

\begin{thm}[\cite{h_bern}, Thm 10.8]
Let $e$ be any primitive idempotent of $\ScZ_n$.
The ring $e\ScZ_n$ is a finitely generated, reduced, $\ell$-torsion free $W(k)$-algebra.
\end{thm}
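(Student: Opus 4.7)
The plan is to identify the block $e\Rep_{W(k)}(G_n)$ with the module category over an explicit Hecke algebra, and then read off the three ring-theoretic properties from Bernstein's presentation of its center. First, for the primitive idempotent $e$ corresponding to an inertial class $(L,\pi)$, I would invoke the theory of types for $\GL_n(F)$ (Bushnell--Kutzko over $\BC$, adapted to the integral setting by Vigneras and S\'echerre--Stevens) to produce a compact open subgroup $K \subset G_n$ and a smooth $W(k)$-representation $\lambda$ of $K$ such that $\cInd_K^{G_n}\lambda$ is a projective generator of the block $e\Rep_{W(k)}(G_n)$. The functor $V \mapsto \Hom_K(\lambda,V)$ then gives an equivalence of categories between $e\Rep_{W(k)}(G_n)$ and right modules over the Hecke algebra $\CH(G_n,\lambda) := \End_{G_n}(\cInd_K^{G_n}\lambda)^{\mathrm{op}}$, under which $e\ScZ_n$ is identified with the center of $\CH(G_n,\lambda)$.

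Next, I would identify $\CH(G_n,\lambda)$ with an extended affine Hecke algebra of type $A$ with parameters in $W(k)$. Over $\BC$ this is Bushnell--Kutzko's theorem for $\GL_n$, and the integral analogue rests on the theory of simple types for $\GL_n(F)$ over $W(k)$; crucially the Hecke parameters are powers of $q$, which are units in $W(k)$ because $q$ is prime to $\ell$. By the Bernstein presentation, such a Hecke algebra contains a commutative subalgebra isomorphic to a Laurent polynomial ring $W(k)[X_1^{\pm 1},\ldots,X_d^{\pm 1}]$, and its center is precisely the invariant subring $W(k)[X_1^{\pm 1},\ldots,X_d^{\pm 1}]^{W'}$ under the action of a finite group $W'$ built from symmetric groups permuting the generators (reflecting the unramified principal-series-type structure of the inertial class).

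Finally, the three conclusions follow formally from this description. A Laurent polynomial ring over $W(k)$ is manifestly reduced, $\ell$-torsion free, and finitely generated as a $W(k)$-algebra; reducedness and $\ell$-torsion freeness pass to subrings, and finite generation of the invariant subring follows from Noether's finiteness theorem since $W(k)$ is Noetherian and $W'$ is finite. The main obstacle is precisely the integral input in the second step: while the isomorphism with an extended affine Hecke algebra is classical over $\BC$, in the $W(k)$-setting one must verify that Bushnell--Kutzko's types and their intertwining structure lift integrally, and that the resulting presentation of $\CH(G_n,\lambda)$ matches the complex one on the nose. This is exactly the structural analysis of blocks of $\Rep_{W(k)}(G_n)$ carried out in Helm's work and constitutes the technical heart of the cited theorem.
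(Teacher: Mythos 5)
Your proposed route --- identifying the block $e\Rep_{W(k)}(G_n)$ with modules over an integral affine Hecke algebra via a type $(K,\lambda)$, then reading off the ring-theoretic properties from a Bernstein presentation of its center --- is a genuinely different argument from the one in \cite{h_bern}. Helm's proof does not go through the Bushnell--Kutzko theory of types; one of the points of that paper is precisely to avoid it, since the integral theory of types for $\GL_n$ over $W(k)$ was not available in the needed form. Instead, Helm works directly with the center: he uses Bernstein's second adjunction for parabolic induction and restriction to reduce the structure of $e\ScZ_n$ to the centers of cuspidal blocks of Levi subgroups, which are then analyzed by more hands-on means (Clifford theory for compact induction, and a comparison with the $\qlb$-Bernstein center by $\ell$-torsion-freeness and reduction mod $\ell$). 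Your route, if carried out, would give the sharper payoff of an explicit Morita-style description of the whole block; Helm's route is self-contained and needs no integral type theory as input.

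Beyond this architectural difference, your second step hides a real subtlety. A primitive idempotent $e$ of $\ScZ_n$ corresponds to a \emph{mod-$\ell$ inertial} supercuspidal support $(L,\pi)$, and after base change to $\qlb$ the block $e\Rep_{W(k)}(G_n)$ generally meets several distinct $\qlb$-Bernstein components, since distinct supercuspidal $\qlb$-representations can share a mod-$\ell$ reduction. So even granting a type for the integral block, the Hecke algebra $\CH(G_n,\lambda)$ is strictly larger than the single extended affine Hecke algebra of type $A$ one sees over $\BC$, and the precise Laurent ring and finite group $W'$ in your claimed presentation of its center require nontrivial justification --- this is effectively the content of the cited theorem, not an input to it. Your final formal deductions (reducedness and $\ell$-torsion-freeness pass to subrings of a Laurent polynomial ring over $W(k)$; finite generation of the $W'$-invariants follows from Noether's finiteness theorem since $W'$ is finite and $W(k)$ is Noetherian) are correct once the structural input is secured.
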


Now let $A$ be a $W(k)$-algebra, and let $V$ be a co-Whittaker $A[G_n]$-module.  Suppose further that $V$ lies in $e\Rep_{W(k)}(G_n)$ for some primitive idempotent $e$ (so the supercuspidal support map $f_V$ factors through the projection $\ScZ_n\to e\ScZ_n$). Let $W_n$ be the smooth $W(k)[G_n]$-module $\cInd_{U_n}^{G_n} \psi$.  For any
primitive idempotent $e$ of $\ScZ_n$, we have an action of $e\ScZ_n$ on $eW_n$.

\begin{thm}[\cite{h_whitt}, Theorem 6.3]
\label{universalcowhitt} 
Let $e$ be any primitive idempotent of $\ScZ_n$.
The smooth $e\ScZ_n[G_n]$-module $eW_n$
is a co-Whittaker $e\ScZ_n[G_n]$-module. If $A$ is Noetherian and has an $e\ScZ_n$-algebra structure, the module $eW_n \otimes_{e\ScZ_n} A$
is a co-Whittaker $A[G_n]$-module. Conversely, 
$V$ is dominated by $eW_n \otimes_{e\ScZ_n, f_V} A$.
\end{thm}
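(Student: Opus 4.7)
The strategy is to establish the co-Whittaker structure of $eW_n$ over $e\ScZ_n$ in three stages: first verify the three defining conditions for $eW_n$ itself, then transfer them through the base change $e\ScZ_n \to A$, and finally prove the universal domination via Frobenius reciprocity.

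The key structural input is the identification $(eW_n)^{(n)}\cong e\ScZ_n$ as $e\ScZ_n$-modules. A direct Bruhat-cell computation in $U_n\bs G_n/U_n$ shows that only the big cell through the long Weyl element contributes nonzero $(U_n,\psi)$-coinvariants (the smaller cells vanish by a character-mismatch on $U_n\cap w U_n w^{-1}$), giving $W_n^{(n)}\cong\ScS(T_n,W(k))$, the smooth compactly supported $W(k)$-valued functions on the torus. The block-by-block rank-one refinement is more delicate and requires Bushnell-Kutzko type theory \cite{BK93} together with Vign\'{e}ras' integral mod-$\ell$ version: these identify each $e\ScZ_n$ with a Hecke algebra on the torus and match the block component of $\ScS(T_n,W(k))$ as a free rank-one module. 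With this identification in hand, admissibility of $eW_n$ over $e\ScZ_n$ follows from the integral Bernstein center finiteness \cite{h_bern} (for compact open $K$, $(eW_n)^K$ is finitely generated over $e\ScH(G_n,K)$, itself finitely generated over $e\ScZ_n$), and the cosocle condition follows because $eW_n$ is generated as an $e\ScZ_n[G_n]$-module by the Whittaker vector $\delta_1$ (the characteristic function of $U_n$ in the compact induction) whose class generates $(eW_n)^{(n)}$; any quotient with trivial coinvariants must therefore kill $\delta_1$ and hence the whole module.

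Base change to $A$ is routine: exactness of $(-)^{(n)}$ gives $(eW_n\otimes_{e\ScZ_n}A)^{(n)}\cong A$ free of rank one, Noetherianity of $A$ preserves admissibility, and the $\delta_1\otimes 1$-generation argument transfers the cosocle condition. For the final domination claim, observe that any co-Whittaker $V$ lies in a unique block $e\Rep_{W(k)}(G_n)$ (the block decomposition commutes with $(-)^{(n)}$ and rank-one freeness concentrates $V$ on a single block), so $f_V$ factors through $e\ScZ_n$. Frobenius reciprocity for compact induction yields
\[
\Hom_{A[G_n]}(eW_n\otimes_{e\ScZ_n,f_V}A, V) \cong V^{(U_n,\psi)},
\]
so the problem reduces to producing a nonzero Whittaker vector in $V$ whose coinvariant class generates $V^{(n)}\cong A$; any such vector induces a map whose cokernel has trivial coinvariants, which vanishes by the cosocle property of $V$, so the map is surjective.

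The main obstacle is the rank-one freeness $(eW_n)^{(n)}\cong e\ScZ_n$: this is the integral refinement of uniqueness of Whittaker models, requiring Bushnell-Kutzko type theory and Vign\'{e}ras' mod-$\ell$ constructions to describe $\ScZ_n$ concretely as a Hecke algebra and match it with the Schwartz-space description of $W_n^{(n)}$ coming from the Bruhat decomposition. A secondary but nontrivial subtlety is ensuring that every co-Whittaker $V$ admits a Whittaker vector whose coinvariant class generates $V^{(n)}$; this is the dual manifestation of the co-Whittaker property and requires a careful adjoint-functor argument involving the admissibility hypothesis and the block structure of $V$.
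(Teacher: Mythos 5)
This theorem is not proved in the paper at all: it is imported verbatim from \cite{h_whitt} (Helm, \emph{Whittaker models and the integral Bernstein center for $\GL_n$}, Theorem 6.3) and used as a black box. So there is no ``paper's proof'' against which to compare; your proposal is an attempt to reconstruct Helm's argument, which goes well beyond what the present paper does. It is worth flagging this, because the statement is a deep structural result whose proof occupies a significant part of \cite{h_whitt} and in turn rests on the main theorems of \cite{h_bern}.

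With that caveat, your reconstruction has the right overall architecture (rank-one freeness of $(eW_n)^{(n)}$ over $e\ScZ_n$ as the crux, then admissibility, the cosocle condition, base change, and finally domination). But two steps as written are not correct as stated. First, the claim that a Bruhat-cell computation in $U_n\backslash G_n/U_n$ shows that only the big cell contributes and yields $W_n^{(n)}\cong\ScS(T_n,W(k))$ is false. Already for $n=2$ the cells $U t U$ for central $t$ contribute a nonzero graded piece to the $(U,\psi)$-coinvariants, and, more fundamentally, the target of the isomorphism is $e\ScZ_n$, which for the unramified block is a symmetric-polynomial algebra $W(k)[X_1^{\pm},\dots,X_n^{\pm}]^{S_n}$ and bears no resemblance to the Schwartz space $\ScS(T_n,W(k))$. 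Over $W(k)$ (as opposed to $\BC$) the cell analysis is further complicated by $\ell$-torsion phenomena: the ``character-mismatch'' argument you invoke only kills a cell when the relevant difference of character values is a \emph{unit}. These issues are not cosmetic; they are precisely what makes the identification $(eW_n)^{(n)}\cong e\ScZ_n$ a theorem rather than a computation.

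Second, the adjunction you invoke for the domination step and the cosocle step is not Frobenius reciprocity for compact induction, because $U_n$ is a closed but \emph{not open} subgroup of $G_n$; the elementary adjunction $\Hom_{G}(\cInd_H^G V,W)\cong\Hom_H(V,W|_H)$ requires $H$ open. What is true, and what Helm actually uses, is a much deeper co-representability statement $\Hom_{G_n}(W_n,M)\cong M^{(n)}$, which comes from the Bernstein--Zelevinsky/Gelfand--Kazhdan theory of derivatives and its integral refinement. Relatedly, your cosocle argument as written (``any quotient with trivial coinvariants must kill $\delta_1$'') has a gap: knowing that the class of $\delta_1$ dies in $Q^{(n)}$ does not by itself imply that $\delta_1$ dies in $Q$. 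The correct argument runs through the co-representability isomorphism above, which you would need to state and justify separately. If you intend to fill in this proof, the honest route is to follow \cite{h_whitt} directly rather than re-derive these facts from scratch.
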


We thus say that, up to the equivalence relation induced by dominance, $eW_n$ is
the universal co-Whittaker module in $e\Rep_{W(k)}(G_n)$.

By definition a co-Whittaker module admits a nonzero Whittaker functional $\iota: V\to \Ind_{U_n}^{G_n}\psi_A$ whose image, denoted by $\ScW(V,\psi)$ and called the ($A$-valued) \emph{Whittaker model of $V$} with respect to $\psi$, is independent of the choice of Whittaker functional. 
Fix a nonzero Whittaker functional $\iota$, for $v \in V$, let 
$$W_v=\iota(v) \in \ScW(V,\psi)\,.$$

\begin{lem}
\label{equivalencecriterion}
Let $A$ be a Noetherian ring and suppose $V_1$ and $V_2$ are two co-Whittaker $A[G_n]$-modules. The following are equivalent:
\begin{enumerate}
\item There exists $W$ in $\ScW(V_1,\psi)\cap \ScW(V_2,\psi)$ such that $W(g)\in A^{\times}$ for some $g\in G$.
\item $\ScW(V_1,\psi)= \ScW(V_2,\psi)$.
\item $f_{V_1}= f_{V_2}$.
\item $V_1$ and $V_2$ are equivalent.
\end{enumerate}
\end{lem}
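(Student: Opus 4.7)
The plan is to prove $(1)\Leftrightarrow(2)$ using the co-Whittaker axioms to translate pointwise-unit values into cyclicity, to prove $(2)\Leftrightarrow(3)$ via the universal co-Whittaker module of Theorem~\ref{universalcowhitt}, and to prove $(3)\Leftrightarrow(4)$ by the functoriality of the $\ScZ_n$-action along surjections. The key observation is that, with the normalization making $\iota_i$ factor through an $A$-linear isomorphism $V_i^{(n)}\cong A$, the value $W_v(g)$ agrees with the image in $V_i^{(n)}\cong A$ of a suitable translate of $v$; in particular, $W_v(g)\in A^\times$ forces that translate to generate $V_i^{(n)}$ as an $A$-module.

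For $(2)\Rightarrow(1)$, I would pick any $v\in V_1$ whose image in $V_1^{(n)}$ is a unit — such a $v$ exists because $V_1^{(n)}$ is free of rank one — so that $W_v(1)\in A^\times$ and $W_v$ lies in $\ScW(V_1,\psi)=\ScW(V_2,\psi)$, yielding (1) with $g=1$. For $(1)\Rightarrow(2)$, let $W$ and $g$ be as in (1), and pick $v_i\in V_i$ with $\iota_i(v_i)=W$. The observation above produces a translate of $v_i$ with unit image in $V_i^{(n)}$. Any quotient of $V_i$ annihilating that translate has vanishing $(n)$-coinvariants, hence is zero by co-Whittaker axiom (iii). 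Therefore $v_i$ generates $V_i$ as an $A[G_n]$-module, and $\ScW(V_i,\psi)=A[G_n]\cdot W$ is independent of $i$.

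For $(2)\Leftrightarrow(3)$, note that $\ScW(V_i,\psi)\cong V_i/\ker\iota_i$ is itself co-Whittaker (nonzero, and a quotient of the co-Whittaker module $V_i$); the action of $\ScZ_n$ descends along quotients, so $f_{V_i}=f_{\ScW(V_i,\psi)}$, yielding $(2)\Rightarrow(3)$. Conversely, if $f_{V_1}=f_{V_2}=:f$, let $e$ be the unique primitive idempotent of $\ScZ_n$ with $V_i\in e\Rep_{W(k)}(G_n)$. By Theorem~\ref{universalcowhitt} the universal module $U:=eW_n\otimes_{e\ScZ_n, f}A$ dominates both $V_1$ and $V_2$. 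A dominance is a surjection of co-Whittaker modules, and any Whittaker functional on the quotient pulls back to one on the source with the same image, so $\ScW(V_1,\psi)=\ScW(U,\psi)=\ScW(V_2,\psi)$, proving $(3)\Rightarrow(2)$. The same $U$ witnesses $(3)\Rightarrow(4)$, while $(4)\Rightarrow(3)$ is immediate: any surjection $V''\twoheadrightarrow V_i$ forces the scalar $f_{V''}(z)$ to act on $V_i$ as $f_{V_i}(z)$ for every $z\in\ScZ_n$, giving $f_{V_1}=f_{V''}=f_{V_2}$.

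The main technical point is the passage in $(1)\Rightarrow(2)$ from a pointwise unit condition on a Whittaker function to cyclicity of its defining vector, which is where co-Whittaker axiom (iii) — every nonzero quotient is generic — does the real work. Once that is in hand, the remaining implications are essentially formal consequences of universality in Theorem~\ref{universalcowhitt} and the functoriality of the Bernstein center.
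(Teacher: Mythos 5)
Your proposal is correct in substance, and on the hardest implication it takes a genuinely different and arguably more elementary route than the paper. The paper proves $(1)\Rightarrow(2)$ by setting $I=\ScW(V_1,\psi)\cap \ScW(V_2,\psi)$, showing fiber-by-fiber over $\Spec A$ that $(\ScW(V_i,\psi)/I)^{(n)}\otimes\kappa(\Fp)$ vanishes, and invoking Nakayama to conclude $I=\ScW(V_i,\psi)$. You instead use the unit value $W(g)\in A^\times$ to exhibit $gv_i\in V_i$ with unit image in $V_i^{(n)}\cong A$, and then co-Whittaker axiom (iii) forces $gv_i$ to be a cyclic vector, giving $\ScW(V_i,\psi)=A[G_n]\cdot W$ for both $i$. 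This avoids localizing at primes entirely. For $(2)\Rightarrow(1)$ the paper appeals to the Kirillov submodule $\cInd_{U_n}^{P_n}\psi_A$ sitting inside the Whittaker model; your observation that one can just lift a generator of $V_1^{(n)}\cong A$ and evaluate at the identity is more direct. For $(2)\Leftrightarrow(3)$ the paper cites \cite[Prop 6.2]{M16}, while you reprove it via functoriality of the $\ScZ_n$-action along surjections and faithfulness of the $A$-action on a co-Whittaker module; this relies on the (true but unstated) fact that $\ScW(V_i,\psi)$ is itself co-Whittaker.

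One small inaccuracy in $(3)\Rightarrow(4)$: you write ``let $e$ be the unique primitive idempotent of $\ScZ_n$ with $V_i\in e\Rep_{W(k)}(G_n)$,'' but in general a co-Whittaker $A[G_n]$-module decomposes as a finite direct sum $\bigoplus_j e_jV_i$ over several blocks (finiteness uses that $A$ is Noetherian), and Theorem~\ref{universalcowhitt} as stated applies only to a single block. The paper's proof handles this by decomposing first and then applying the universal-module argument on each factor. Note also that the hypothesis $f_{V_1}=f_{V_2}$ guarantees the two modules live in the same set of blocks (a primitive idempotent $e$ acts nontrivially on $V_i$ precisely when $f_{V_i}(e)\neq 0$), so the blockwise argument goes through. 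This is an easy patch and does not affect the architecture of your proof.
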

\begin{proof}
First the equivalence of (3) and (4). Any co-Whittaker $A[G_n]$-module $V$ is a direct sum of the subrepresentations $e_iV\in e_i\Rep_{W(k)}(G_n)$; this sum is finite since $A$ is Noetherian. The equivalence then follows from Theorem \ref{universalcowhitt}. Next we show (1)$\implies$(2). Let $I=\ScW(V_1,\psi)\cap \ScW(V_2,\psi)$. The assumption that $W$ takes values in $A^{\times}$ guarantees that $I\otimes\kappa(\Fp)$ is nonzero for all $\Fp\in \Spec(A)$, and $(I\otimes\kappa(\Fp))^{(n)}=(\ScW(V_1,\psi)\otimes\kappa(\Fp))^{(n)}$. The finitely generated $A$-module $(\ScW(V_1,\psi)/I)^{(n)}\otimes\kappa(\Fp)$ is therefore zero for all $\Fp$, and by Nakayama's lemma the localizations $(\ScW(V_1,\psi)/I)^{(n)}_{\Fp}$ are zero for all $\Fp$, hence $(\ScW(V_1,\psi)/I)^{(n)}=0$. Since $\ScW(V_1,\psi)$ is co-Whittaker, we get $I=\ScW(V_1,\psi)$, and the parallel argument gives the same for $V_2$. For (2)$\implies$(1) we note that upon restriction to $P_n$, $\ScW(V_i,\psi)$ contains $\cInd_{U_n}^{P_n}\psi_A$, which has functions valued in $A^{\times}$. The equivalence of $(3)$ and $(2)$ were proven in \cite[Prop 6.2]{M16}.
\end{proof}

There is a duality operation on co-Whittaker modules which interpolates the contragredient across a co-Whittaker family (\cite[Prop 2.6]{HM16}). If $V$ is a smooth $W(k)[G_n]$-module, let $V^{\iota}$ denote the $W(k)[G_n]$-module with the same underlying $W(k)$-module structure, and for which the $G_n$ action, which we will denote by $g\cdot v$, is given by $g\cdot v = g^{\iota}v$, where $g^{\iota}={ ^tg}^{-1}$. This duality has a very concrete interpretation in terms of Whittaker functions. Let 
$$\omega_{n,m} = \begin{pmatrix}
I_{n-m}& 0 \\
0 & \omega_m
\end{pmatrix},$$
 where $\omega_m$ is the longest Weyl element of $G_m$. For any function $W$ on $G_n$, let $\wt{W}(g)=W(\omega_n g^{\iota})$. If $W$ is in $\ScW(V,\psi)$, then $\wt{W}$ is in $\ScW(V^{\iota},\psi^{-1})$.

\section{Rankin-Selberg theory and gamma factors}
\label{section:gamma}

Let $A$ and $B$ be Noetherian $W(k)$-algebras and let $R = A\otimes_{W(k)}B$. Let $V$ and $V'$ be $A[G_n]$- and $B[G_m]$-modules respectively, where $m<n$, and suppose both $V$ and $V'$ are of Whittaker type. For $W\in \CW(V,\psi)$ and $W'\in \CW(V', \psi^{-1})$, 
and for $0\leq j \leq n-m-1$, we define the formal series with coefficients in $R$ ($X$ is a formal variable): 
\begin{align*}
& \ \Psi(W,W',X;j)\\
:= & \ \sum_{r\in \BZ}\int_{M_{j,m}(F)}\int_{N_m\backslash\{g\in G_m:v(\det g)=r\}}                    \left(W\left(\begin{smallmatrix}g &&\\x&I_j &\\&&I_{n-m-j} \end{smallmatrix}\right)\otimes W'(g)\right)X^rdg dx\,.
\end{align*}

Let $S$ be the multiplicative system in $R[X, X^{-1}]$ consists of all polynomials whose leading and trailing coefficients are units.

\begin{thm}[Moss \cite{M16}]
\label{fnleqn}
\begin{enumerate}
\item Both $\Psi(W,W',X)$ and $\Psi(W,W',X;j)$ are elements in $S^{-1}(R[X, X^{-1}])$. 

\item There exists a unique element $\gamma (V \times V', X, \psi) \in S^{-1}(R[X, X^{-1}])$ such that 
\begin{align*}
& \ \Psi(W, W', X; j) \gamma(V \times V', X, \psi) \omega_{V'}(-1)^{n-1} 
\\
= & \ \Psi(\omega_{n,m}\wt{W}, \wt{W'}, \frac{q^{n-m-1}}{X}; n-m-1-j)\,,
\end{align*}
for any $W \in \CW(V,\psi)$, $W' \in \CW(V', \psi)$ and for any $0 \leq j \leq n-m-1$. 
\end{enumerate}
\end{thm}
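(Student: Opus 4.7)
The plan is to mimic the classical Jacquet--Piatetski-Shapiro--Shalika construction, but replace complex-analytic input with algebraic input available over the Noetherian base $R = A \otimes_{W(k)} B$. Part (1) is the rationality of the zeta integrals, and part (2) is the existence and uniqueness of the gamma factor via a functional equation.

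\textbf{Part (1): rationality.} First I would verify that each coefficient of $X^r$ in $\Psi(W,W',X;j)$ is a well-defined element of $R$. Smoothness of $W$ and $W'$ gives $K$-invariance for a compact open subgroup $K$, so the inner integral over $N_m \backslash \{g \in G_m : v(\det g) = r\}$ reduces modulo $K$ to a finite sum. The outer $M_{j,m}(F)$-integral converges because the function $x \mapsto W\!\left(\begin{smallmatrix} g & & \\ x & I_j & \\ & & I_{n-m-j}\end{smallmatrix}\right)$ is supported on a compact set depending on $g$; this in turn reduces to the corresponding property of $\cInd_{U_n}^{P_n}\psi_A$ inside $\CW(V,\psi)$, which follows from $V^{(n)}$ being free of rank one together with the structure of the restriction to the mirabolic $P_n$. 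Next I would bound the support in $r$ from below, using the same mirabolic structure plus Iwasawa decomposition on $G_m$; this yields $\Psi(W,W',X;j) \in R[[X]][X^{-1}]$. To upgrade from a Laurent series to an element of $S^{-1}(R[X,X^{-1}])$, I would invoke the Bernstein--Zelevinsky derivative filtration adapted to co-Whittaker families: translating $W$ by diagonal elements $\diag(\varpi^r, 1, \ldots, 1)$ generates a submodule of $\CW(V,\psi)$ whose degrees satisfy a finite $R$-linear recurrence in $r$, and standard generating-function arguments then convert this recurrence into rationality in $X$. The leading and trailing coefficients of the denominator become units after localization because the top derivative is controlled by the rank-one condition on $V^{(n)}$.

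\textbf{Part (2): the gamma factor.} Here I would argue by uniqueness of a certain bilinear pairing. Both sides of the desired identity give rise to $R$-bilinear maps
\[
B_j, \ B'_j : \CW(V,\psi) \times \CW(V',\psi^{-1}) \longrightarrow S^{-1}(R[X, X^{-1}])
\]
with the same $G_m$-quasi-invariance: under the embedding $G_m \hookrightarrow G_n$ sending $g \mapsto \diag(g, I_{n-m})$, translating $(W, W')$ by $g$ scales each pairing by the same cocycle involving $|\det g|$ tracked through $X$. The analogue of the JPSS uniqueness of such bilinear pairings, proved in the family setting by spreading out over $\Spec(R)$ and reducing to generic fibers where the classical argument applies, forces $B'_j$ and $B_j$ to differ by a single scalar in $\Frac(R)((X))$. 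Comparing the two expressions against a test pair $(W_0, W'_0)$ that makes $B_j$ a unit in the localization, one sees this scalar lies in $S^{-1}(R[X, X^{-1}])$ and is independent of $j$; up to the sign $\omega_{V'}(-1)^{n-1}$ it is $\gamma(V \times V', X, \psi)$.

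\textbf{Main obstacle.} The hardest step is the rationality in part (1): the classical proof relies on analytic continuation in the complex variable $s$ and on asymptotic estimates for Whittaker functions, neither of which is available over a general Noetherian $W(k)$-algebra. The substitute is a purely algebraic derivative theory for co-Whittaker families together with finiteness properties coming from the integral Bernstein center, and establishing the required finite-generation and recurrence is the substantive input. Once rationality is in hand, the uniqueness argument in part (2) can be checked fiberwise and becomes essentially bookkeeping given the classical JPSS result at each $\Spec \kappa(\Fp)$.
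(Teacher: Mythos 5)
This theorem is not proved in the present paper; it is cited wholesale from Moss \cite{M16}, so the comparison must be against the argument there, where the structural pivot is to first establish everything over the universal co-Whittaker modules $eW_n$ and $e'W_m$ living over the Bernstein-center components $e\ScZ_n \otimes e'\ScZ_m$, and only afterwards to base change to arbitrary Noetherian $W(k)$-algebras $A$ and $B$. Your proposal never makes this reduction, and that omission is where it breaks.

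For part (2), the "spreading out over $\Spec(R)$ and reducing to generic fibers" step is not a valid argument for a general Noetherian $W(k)$-algebra $R = A\otimes B$: $R$ may be non-reduced, may have no embedding into a product of fields, and even if reduced one needs flatness or $\ell$-torsion-freeness to pull a statement from generic points back to all of $\Spec(R)$. Also, $\Frac(R)$ need not exist. The fix (and what Moss actually does) is to prove the multiplicity-one/uniqueness statement and the functional equation over $e\ScZ_n \otimes e'\ScZ_m$, which by Helm's theorem (quoted in the paper as the assertion that $e\ScZ_n$ is finite type, reduced, and $\ell$-torsion free) is precisely the kind of ring over which a generic-point argument can be made rigorous, and then deduce the general case by functoriality of the zeta integrals and gamma factor under ring homomorphisms. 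Without this intermediate step your bilinear-form uniqueness claim is not established.

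For part (1), your rationality mechanism ("translating $W$ by $\diag(\varpi^r,1,\dots,1)$ generates a submodule whose degrees satisfy a finite $R$-linear recurrence") is asserted rather than derived, and it is not the argument used in \cite{M16}. The translates do produce shift relations $\Psi(\pi(a)W,W',X;j) = c(a,X)\Psi(W,W',X;j)$ for central $a$, but these only relate zeta integrals of different Whittaker functions; extracting a finite recurrence for the coefficients of a single $\Psi$ requires a finiteness statement (admissibility of $eW_n$ over $e\ScZ_n$, or finite generation of the fractional ideal of $S^{-1}R[X,X^{-1}]$ generated by all the $\Psi(W,W',X;j)$) that you do not supply. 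Moss's actual route is to show that this fractional ideal is finitely generated and contains $1$; your "generating-function argument" gestures at the conclusion but skips the substantive input. You are right that this is the hardest step and that the Bernstein center is the key to the needed finiteness, but the proposal stops short of the actual argument.
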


\begin{prop}\label{prop:central}
Let $A$ be a Noetherian $W(k)$-algebra, let $\pi$ be a co-Whittaker $A[G_n]$-module with $n\geq 2$. Then there exists an integer $m_{\pi}$ such that for any character $\chi:F^{\times}\to W(k)^{\times}$ of conductor $m\geq m_{\pi}$, and any $c\in \Fp^{-m}$ satisfying $\chi(1+x)=\psi(cx)$ for any 
$x\in \Fp^{[m/2]+1}$, we have
$$\gamma(\pi\times \chi,X,\psi)=\omega_{\pi}(c)^{-1}\gamma(1_A\times \chi, X,\psi)^n\,,$$ where $1_A$ denotes the trivial character of $F^{\times}$.
\end{prop}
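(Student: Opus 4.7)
The plan is to reduce to the universal co-Whittaker module and then apply the classical stability of gamma factors at each generic point, in the spirit of \cite[Corollary 2.7]{JNS15}. By Theorem~\ref{universalcowhitt}, the supercuspidal support $f_\pi:\ScZ_n\to A$ factors through $e\ScZ_n$ for a unique primitive idempotent $e$, and $\pi$ is dominated by $eW_n\otimes_{e\ScZ_n,f_\pi}A$. Since the gamma factor depends only on the Whittaker model (equivalently, on the equivalence class of $\pi$, by Lemma~\ref{equivalencecriterion}) and is compatible with base change of the coefficient ring, and since the central character likewise base-changes through $f_\pi$, it suffices to prove the identity in the case $A=e\ScZ_n$ and $\pi=eW_n$.

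Next, I would exploit that $e\ScZ_n$ is a reduced, $\ell$-torsion free, finitely generated $W(k)$-algebra (\cite[Thm 10.8]{h_bern}). The natural map $e\ScZ_n\hookrightarrow\prod_{\Fp}\kappa(\Fp)$, where $\Fp$ ranges over minimal primes, is injective, so the desired identity in $S^{-1}(e\ScZ_n[X,X^{-1}])$ can be checked after specialization to each $\kappa(\Fp)$. Fixing an embedding $\kappa(\Fp)\hookrightarrow\overline{W(k)[1/\ell]}\cong\BC$, the fiber $eW_n\otimes_{e\ScZ_n}\kappa(\Fp)$ has Whittaker model equal to that of an irreducible generic complex representation $\sigma_\Fp$, and both sides of the proposition specialize to the corresponding complex gamma factors by the functoriality of the Rankin--Selberg construction recalled in Section~\ref{section:gamma}.

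Finally, at each such complex point, the identity is precisely the classical stability of gamma factors of Jacquet--Shalika (as used, for example, in \cite[\S 2]{JNS15}): for $\chi$ of sufficiently large conductor $m\geq m_{\sigma_\Fp}$, one has $\gamma(\sigma_\Fp\times\chi,X,\psi)=\omega_{\sigma_\Fp}(c)^{-1}\gamma(1\times\chi,X,\psi)^n$. The main obstacle is to choose the threshold $m_\pi$ \emph{uniformly} across the infinitely many minimal primes of $e\ScZ_n$. I expect this to follow from the fact that all the $\sigma_\Fp$ lie in the single Bernstein block determined by $e$, so that their depth is uniformly bounded in terms of the inertial pair $(L,\rho)$ labelling $e$; the classical stability bound depends only on this depth invariant. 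A uniform $m_e$ obtained in this way then serves as $m_\pi$ for every co-Whittaker $A[G_n]$-module whose supercuspidal support factors through $e\ScZ_n$.
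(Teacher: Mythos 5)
Your overall strategy is the same as the paper's: reduce via Theorem~\ref{universalcowhitt} to the universal co-Whittaker module over $e\ScZ_n$, use that $e\ScZ_n$ is reduced so the diagonal map into $\prod_\Fp\kappa(\Fp)$ over minimal primes is injective, and at each such prime invoke the classical stability of gamma factors from \cite[Prop.~2.6]{JNS15}. (The paper actually proves the case of reduced, $\ell$-torsion-free, finite-type $A$ first and only passes through $e\ScZ_n$ at the very end to handle general $A$, but that is a cosmetic difference.)

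There is, however, one genuine error in your write-up: the assertion that $e\ScZ_n$ has ``infinitely many minimal primes.'' By \cite[Thm.~10.8]{h_bern}, $e\ScZ_n$ is a \emph{finitely generated} $W(k)$-algebra, hence Noetherian, and a Noetherian ring has only \emph{finitely many} minimal primes. (The same finiteness is used in the paper, where $A$ is taken to be reduced, $\ell$-torsion-free and finite type over $W(k)$ and the minimal primes are enumerated $\Fp_1,\dots,\Fp_r$.) The uniformity you worry about is therefore immediate: one simply sets $m_\pi = \max_i m_{\sigma_{\Fp_i}}$ over the finite list. Your proposed workaround via a depth bound uniform over the Bernstein block determined by $e$ is plausible but unnecessary, and as stated it is an unproven claim (you ``expect'' it rather than justify it). Replace the appeal to uniform depth with the observation that there are finitely many minimal primes and your argument closes.
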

\begin{proof}
To begin, assume that $A$ is reduced and $\ell$-torsion free. There are finitely many minimal prime ideals $\Fp_1,\dots,\Fp_r$ of $A$. If $\kappa(\Fp_i)$ denotes the residue field $\text{Frac}(A/\Fp_i)$, then $\kappa(\Fp_i)$ has characteristic zero and contains $W(k)$. As $\pi$ is defined over a subalgebra which is finite type over $W(k)$, we may assume without loss of generality that $A$ is finite-type over $W(k)$ and choose an isomorphism $\overline{\kappa(\Fp_i)}\cong \BC$. Let $\pi_{i,0}$ denote the cosocle of $\pi\otimes_A\kappa(\Fp_i)$, which is absolutely irreducible generic by definition. 

In \cite[Proposition 2.6]{JNS15}, they prove the analogous proposition for irreducible generic representations of $G_n$ over $\BC$. Therefore, for all $i$, there exists $m_{\pi_{i,0}}$ such that
$$\gamma(\pi_{i,0}\times\chi,X,\psi)=\omega_{\pi_{i,0}}(c)^{-1}\gamma(1_{\overline{\kappa(\Fp_i)}}\times\chi,X,\psi)^n\,,$$ 
for any character $\chi:F^{\times}\to W(k)^{\times}$ of conductor $m\geq m_{\pi_{i,0}}$, and any $c\in \Fp^{-m}$ satisfying $\chi(1+x)=\psi(cx)$ for any 
$x\in \Fp^{[m/2]+1}$. On the other hand, $$\gamma(\pi,X,\psi)\equiv\gamma(\pi_{i,0}\times\chi,X,\psi) \mod\Fp_i\,,$$ 
by the compatibility of the gamma factor with homomorphisms of the base ring. For the same reason, we also have
$$\omega_{\pi}(c)^{-1}\gamma(1_A\times\chi,X,\psi)^n\equiv\omega_{\pi_{i,0}}(c)^{-1}\gamma(1_{\overline{\kappa(\Fp_i)}}\times\chi,X,\psi)^n\mod \Fp_i\,.$$ 
Therefore, for $m$ larger than the maximum of all $m_{\pi_{i,0}}$, the difference
$$\gamma(\pi,X,\psi)-\omega_{\pi}(c)^{-1}\gamma(1_A\times\chi,X,\psi)^n$$ lies in $\Fp_i$ for all $i$. Since $A$ is reduced, $\bigcap_i\Fp_i=0$, hence this difference is zero.

To go beyond the case where $A$ is reduced and $\ell$-torsion free we recall that each component $e\ScZ_n$ of the integral Bernstein center is reduced and $\ell$-torsion free. Without loss of generality, we may assume that $e\pi=\pi$ for a primitive idempotent $e$ of $\ScZ_n$. By Theorem~\ref{universalcowhitt}, $\pi$ is the base change, from $e\ScZ_n$ to $A$, of the co-Whittaker $e\ScZ_n[G_n]$-module $W_n$. We now apply the preceding paragraph to the co-Whittaker $e\ScZ_n[G_n]$-module $W_n$ to conclude that
$$\gamma(W_n\times\chi,X,\psi) = \omega_{W_n}(c)^{-1}\gamma(1_A\times\chi,X,\psi)^n\,,$$ 
where $\omega_{W_n}:F^{\times}\to e\ScZ_n$ denotes the central character of $W_n$. In general, there is a homomorphism $f_{\pi}:e\ScZ_n\to A$ such that 
$$\gamma(\pi\times\chi,X,\psi) = f_{\pi}(\gamma(W_n\times\chi,X,\psi)\,,$$ 
and similarly for $\omega_\pi$ and $\gamma(1_A\times\chi,X,\psi)^n$. The equal quantities over $e\ScZ_n$ map to equal quantities over $A$, which proves the theorem.
\end{proof}

The following result is an analogue of \cite[Corollary 2.7]{JNS15}.

\begin{prop}\label{centralcharacter}
Let $A$ be a Noetherian $W(k)$-algebra, let $\pi_1, \pi_2$ be co-Whittaker $A[G_n]$-modules with $n\geq 2$. Assume that 
$$\gamma(\pi_1\times \chi,X,\psi)=\gamma(\pi_2 \times \chi,X,\psi)\,,$$
for any character $\chi:F^{\times}\to W(k)^{\times}$. 
Then $\omega_{\pi_1} = \omega_{\pi_2}$. 
\end{prop}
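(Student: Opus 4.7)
The plan is to reduce the hypothesis, via Proposition~\ref{prop:central}, to the equality $\omega_{\pi_1}(c) = \omega_{\pi_2}(c)$ for all $c \in F^{\times}$ of sufficiently negative valuation, and then to propagate this equality to all of $F^{\times}$ by multiplicativity of the quotient character $\omega_{\pi_1}\omega_{\pi_2}^{-1}$.

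First, fix $m_0 := \max(m_{\pi_1}, m_{\pi_2})$, where $m_{\pi_i}$ is the integer supplied by Proposition~\ref{prop:central}. For any character $\chi : F^{\times} \to W(k)^{\times}$ of conductor $m \geq m_0$ and any compatible $c \in \Fp^{-m}$, applying the proposition to both $\pi_i$ gives
\begin{equation*}
\gamma(\pi_i \times \chi, X, \psi) = \omega_{\pi_i}(c)^{-1}\,\gamma(1_A \times \chi, X, \psi)^n, \quad i = 1, 2.
\end{equation*}
The hypothesis equates the two left-hand sides. Since $\chi$ is ramified, $\gamma(1_A \times \chi, X, \psi)$ is a classical $\GL_1 \times \GL_1$ Tate factor, equal to a monomial in $X$ times a Gauss sum whose $\ell$-adic valuation is zero (because $q$ is prime to $\ell$), so it is a unit. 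Thus $\gamma(1_A \times \chi, X, \psi)^n$ can be canceled, leaving $\omega_{\pi_1}(c) = \omega_{\pi_2}(c)$.

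Next I would show that every $c \in F^{\times}$ with $v(c) \leq -m_0$ arises this way. Given $c$ with $v(c) = -m$, $m \geq m_0$, the formula $1 + x \mapsto \psi(cx)$ defines a $W(k)^{\times}$-valued character $\chi_0$ of $1 + \Fp^{[m/2]+1}$ (well-defined since $2([m/2]+1) > m$ forces the quadratic term $cxy$ to lie in $\CO_F$). Because $\CO_F^{\times}/(1 + \Fp^{[m/2]+1})$ is finite of order prime to $\ell$ and $W(k)^{\times}$ contains all prime-to-$\ell$ roots of unity, $\chi_0$ extends to a character of $\CO_F^{\times}$ and then arbitrarily to $F^{\times} = \CO_F^{\times} \cdot \varpi^{\BZ}$. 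The resulting $\chi$ has conductor exactly $m$: its restriction to $1 + \Fp^m$ is trivial (as $\psi$ vanishes on $\CO_F$), while its restriction to $1 + \Fp^{m-1}$ is nontrivial (as $\psi$ has conductor zero and $v(c) = -m$).

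Set $\eta := \omega_{\pi_1}\omega_{\pi_2}^{-1}$. The previous steps give $\eta(c) = 1$ for every $c \in F^{\times}$ with $v(c) \leq -m_0$. For an arbitrary $a \in F^{\times}$, choose $b \in F^{\times}$ with $v(b) \leq -m_0 - \max(0, v(a))$; then both $v(b) \leq -m_0$ and $v(ab) \leq -m_0$, and multiplicativity yields $\eta(a) = \eta(ab)\,\eta(b)^{-1} = 1$. Hence $\omega_{\pi_1} = \omega_{\pi_2}$. The main technical point is the character extension step; once that is in place, the rest is a direct application of Proposition~\ref{prop:central} together with an elementary manipulation of $\eta$.
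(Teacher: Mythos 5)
Your proof follows the same approach as the paper's: reduce via Proposition~\ref{prop:central} to $\omega_{\pi_1}(c)=\omega_{\pi_2}(c)$ for all $c$ of sufficiently negative valuation, then propagate by multiplicativity. In fact your write-up is slightly cleaner than the paper's in one place: you invoke the integer $m_{\pi_i}$ directly from the \emph{statement} of Proposition~\ref{prop:central}, whereas the paper reaches back into that proposition's \emph{proof} for the auxiliary integers $m_{\pi_{i,j,0}}$, which is unnecessary. Your step of explicitly cancelling $\gamma(1_A\times\chi,X,\psi)^n$ (justified since it is a unit; cf. \cite[Cor.\ 5.6]{M16}) is also spelled out more carefully than in the paper, which elides it.

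One inaccuracy worth flagging: in justifying that $\chi_0$ extends from $1+\Fp^{[m/2]+1}$ to $\CO_F^\times$, you assert that $\CO_F^\times/(1+\Fp^{[m/2]+1})$ has order prime to $\ell$. That group has order $(q-1)q^{[m/2]}$, and while $q^{[m/2]}$ is prime to $\ell$, the factor $q-1$ need not be. So the stated reason can fail when $\ell\mid q-1$. The extension still exists, but for a different reason: the character $1+x\mapsto\psi(cx)$ takes values in $\mu_{p^\infty}(W(k))$ (since $\psi$ factors through a $p$-group quotient $\Fp^{-N}/\Fp^{N'}$), and $\mu_{p^\infty}$ is a divisible, hence injective, abelian group, so the extension to $\CO_F^\times$, and then to $F^\times$, exists unconditionally. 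The paper simply asserts the existence of $\chi_c$ without proof, so you were right to try to justify it, but the justification should use divisibility of the target rather than coprimality of the quotient's order. This does not affect the correctness of the overall argument.
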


\begin{proof}
Let $m_{\pi_{1,i,0}}, m_{\pi_{2,i,0}}$ be the numbers given in the proof of Proposition \ref{prop:central} for $\pi_1$ and $\pi_2$, respectively, $i=1,\dots,r$. Let $m_0$ the max of $\{m_{\pi_{1,i,0}}, m_{\pi_{2,i,0}}, i=1,\dots,r\}$. 

For any $c \in \Fp^{-m} \bs \Fp^{1-m}$, with $m \geq m_0$, there exists a character $\chi_c$ of conductor $m$ such that $\chi_c(1 + x) = \psi(cx)$ for $x \in \Fp^{[m/2]+1}$; thus Proposition \ref{prop:central} implies
$\omega_{\pi_1}(c)=\omega_{\pi_2}(c)$. 
Since any element of $F^{\times}$ can be expressed as the quotient of two elements of valuation at most $-m$, we deduce that $\omega_{\pi_1}=\omega_{\pi_2}$.
\end{proof}

\section{Two lemmas}
\label{section:vanishinglemmas}

In this section, we prove two lemmas, which will play important roles in later sections. 

For two $W(k)$-algebras $A$, $B$, and $\phi_1\in \cInd_{U_n}^{G_n}\psi_A$, $\phi_2\in \Ind_{U_n}^{G_n}\psi_B^{-1}$, we denote by $\langle \phi_1,\phi_2\rangle$ the element
$$\int_{U_n\backslash G_n}\phi_1(x)\otimes \phi_2(x) dx\in A\otimes_{W(k)}B\,.$$ 
Under certain hypotheses on $A$, we can detect the vanishing of $\phi_1$ by letting $\phi_2$ run over the collection of all Whittaker functions valued in $B=\zlb$.
We recall the following theorem in \cite{M16}. 

\begin{thm}[\cite{M16} Thm 6.4]
\label{vanishingthm}
Suppose $A$ is a finite-type, reduced, $\ell$-torsion free $W(k)$-algebra. Suppose $H\neq 0$ is an element of $\cInd_{U_n}^{G_n}\psi_A$. Then there exists an irreducible generic integral $\qlb$-representation $V'$ with $\zlb$-integral structure $V$, such that there is a Whittaker function $W\in \ScW(V^{\iota},\psi_{\zlb}^{-1})$ satisfying $\langle H,W\rangle \neq 0$ in $A\otimes_{W(k)}\zlb$.
\end{thm}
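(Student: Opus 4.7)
First I would reduce to the classical Jacquet--Shalika local nondegeneracy theorem over $\BC$, and then upgrade the resulting complex representation to an integral $\qlb$-representation using the structure of the integral Bernstein center. Since $A$ is reduced, the intersection of its minimal primes vanishes, and since $A$ is $\ell$-torsion free, each minimal prime $\Fp$ has residue field $\kappa(\Fp)$ of characteristic zero. As $H\neq 0$, there exist some $g\in G_n$ and some minimal prime $\Fp$ with $H(g)\notin \Fp$, so the image $\bar H$ of $H$ in $\cInd_{U_n}^{G_n}\psi_{\kappa(\Fp)}$ is nonzero; by functoriality of the pairing, any $W$ with $\langle \bar H, W\rangle\neq 0$ in $\kappa(\Fp)\otimes_{W(k)}\zlb$ yields the required nonvanishing in $A\otimes_{W(k)}\zlb$. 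After embedding $\kappa(\Fp)\hookrightarrow\BC$ compatibly with the fixed isomorphism $\overline{W(k)[1/\ell]}\cong\BC$, we view $H$ as a nonzero element of $\cInd_{U_n}^{G_n}\psi_{\BC}$, and the classical result of \cite{JS81} produces an irreducible generic complex representation $\pi$ and $W\in\ScW(\pi,\psi^{-1})$ with $\langle H, W\rangle\neq 0$.

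The main obstacle is the integrality requirement: the theorem demands that $\pi$ admit a $\zlb$-lattice and that $W$ be $\zlb$-valued. To address this, I would use the Bernstein decomposition $\cInd_{U_n}^{G_n}\psi_{W(k)} = \bigoplus_e eW_n$ over primitive idempotents $e\in\ScZ_n$ to write $H = \sum_e H_e$ and fix some $e$ with $H_e\neq 0$. Each $eW_n$ is the universal co-Whittaker module over $e\ScZ_n$, which by Helm's theorem quoted above is a finite-type, reduced, $\ell$-torsion free $W(k)$-algebra, and every $\zlb$-valued point $x\colon e\ScZ_n\to\zlb$ specializes $eW_n$ to the $\zlb$-integral Whittaker model of an integral generic $\qlb$-representation. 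Thus the task reduces to locating a good $\zlb$-point of $\Spec(e\ScZ_n)$.

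The hardest step is then producing the nonvanishing at an integral point: for each section $W\in eW_n$, the assignment $x\mapsto \langle H_e, W\otimes_{e\ScZ_n,x}1\rangle$ is an algebraic function on $\Spec(e\ScZ_n\otimes_{W(k)}A\otimes_{W(k)}\zlb)$, and applying Jacquet--Shalika at a $\BC$-point compatible with the chosen embedding shows this function is not identically zero. Invoking the Zariski density of $\zlb$-valued closed points in a finite-type reduced $W(k)$-algebra then forces nonvanishing at some $\zlb$-point, yielding the desired integral representation $V'$ and integral Whittaker function $W$. The delicate technical point is matching the $\BC$-point produced by Jacquet--Shalika to a specialization of the universal family $eW_n$ in a way that preserves nonvanishing of the pairing; equivalently, one must verify that the family of pairings obtained by specializing $eW_n$ over $\Spec(e\ScZ_n)$ really controls the pairing against arbitrary Whittaker functions of complex generic representations.
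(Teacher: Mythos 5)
Note first that the paper itself does not prove this result: the statement is imported verbatim as \cite[Thm.~6.4]{M16}, so there is no in-house argument to compare against. Your outline is nonetheless a sensible reconstruction, and the high-level plan --- reduce to a characteristic-zero residue field $\kappa(\Fp)\hookrightarrow\BC$ of $A$ using reducedness and $\ell$-torsion freeness, invoke the classical nondegeneracy of \cite{JS81} over $\BC$, and then pass to integral data through the universal co-Whittaker module $eW_n$ over $e\ScZ_n$ --- is consistent with what the paper says about \cite{M16}, namely that it relies on ``algebro-geometric techniques, together with the theory of the integral Bernstein center.''

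The step you flag as ``the delicate technical point'' is, however, a genuine gap rather than a formality, and the way you set it up blurs a distinction that matters. You speak as if $eW_n = e\cInd_{U_n}^{G_n}\psi$ is itself a Whittaker model that specializes to the $\zlb$-integral Whittaker model of a $\qlb$-representation; but $eW_n$ is the universal co-Whittaker module (a compact induction), while the functions $W$ one pairs $H$ against live in $\Ind_{U_n}^{G_n}\psi^{-1}$, where $\ScW(V^{\iota},\psi_{\zlb}^{-1})$ sits. There is also a sign issue: $H$ and $W$ must transform under $U_n$ by $\psi$ and $\psi^{-1}$ respectively for the integral over $U_n\backslash G_n$ to make sense, so one must either work with $\cInd_{U_n}^{G_n}\psi^{-1}$ throughout or systematically apply $W\mapsto\wt{W}$; your draft glosses over which character is attached to which side. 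What must actually be verified, and what your proposal only names, is that the Whittaker model of the specialization of $eW_n$ at an arbitrary $\BC$-point of $\Spec(e\ScZ_n)$ coincides with the Whittaker model of the corresponding irreducible generic $\BC$-representation (so that the Jacquet--Shalika function $W_{\BC}$ really arises from an $e\ScZ_n$-valued Whittaker function), and that this specialization commutes with the pairing $\langle H_e, -\rangle$. Once that bridge is built, your density argument --- a nonzero element of the reduced, $\ell$-torsion free, finite-type $W(k)$-algebra $A\otimes_{W(k)}e\ScZ_n$ retains nonzero image at some $\zlb$-point of $\Spec(e\ScZ_n)$ --- does close the loop, but as it stands the proposal identifies the crux without carrying it out.
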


The following lemma is an analogue of 
\cite[Lemma 2.4]{JL16}. 

\begin{lem}
\label{vanishinglem}
Let $A$ be as in Thm \ref{vanishingthm} and let $\pi_1$ and $\pi_2$ be two co-Whittaker $A[G_n]$-modules. Suppose $t\leq n-2$ and $j$ with $0\leq j\leq t$. Suppose that $W^1$ and $W^2$ are elements in the Whittaker models of $\pi_1$ and $\pi_2$, respectively. Suppose further that for all irreducible generic integral representations $\tau$ in $\Rep_{\qlb}(G_{n-t-1})$ we have
$$\Psi(X,W^1,W';j)=\Psi(X,W^2,W';j)$$ for all $W'\in\ScW(\tau,\psi_{\qlb}^{-1})$. Then
$$\int W^1\begin{pmatrix}
I_{n-t-1} & 0 & 0 \\
X & I_j & 0\\
0 & 0 & I_{t+1-j}
\end{pmatrix} dX = \int W^2\begin{pmatrix}
I_{n-t-1} & 0 & 0 \\
X & I_j & 0\\
0 & 0 & I_{t+1-j}
\end{pmatrix} dX\,,$$
where the integrals are over $X \in M_{j \times (n-t-1)}(F)$. 
\end{lem}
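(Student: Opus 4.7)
The plan is to reduce the lemma to a vanishing statement for an auxiliary function in $\cInd_{U_m}^{G_m}\psi_A$ (with $m = n-t-1$) and then invoke Theorem~\ref{vanishingthm}. Set $W := W^1 - W^2$ and define
$$H(g) := \int_{M_{j,m}(F)} W\begin{pmatrix} g & & \\ x & I_j & \\ & & I_{n-m-j}\end{pmatrix} dx\,,\quad g\in G_m\,.$$
Evaluating at $g = I_m$ recovers the quantity we want to show is zero, so it suffices to prove $H \equiv 0$.

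First I would check that $H$ lies in $\cInd_{U_m}^{G_m}\psi_A$. The transformation property $H(ug) = \psi(u)H(g)$ for $u \in U_m$ follows by embedding $u$ as $\diag(u, I_j, I_{n-m-j}) \in U_n$ and using the left $U_n$-equivariance of $W$; one checks $\psi_{U_n}$ restricts to $\psi_{U_m}$ under this embedding. Compact support of $H$ modulo $U_m$ follows from the standard decay properties of Whittaker functions for co-Whittaker representations restricted to the mirabolic (recall that $\ScW(\pi_i,\psi)|_{P_n}$ contains $\cInd_{U_n}^{P_n}\psi_A$, and in fact the relevant ``bottom row'' structure makes Whittaker functions on co-Whittaker modules compactly supported in the mirabolic direction modulo $U_n$). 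Next, by unfolding the definition of $\Psi$ from Section~\ref{section:gamma}, one has
$$\Psi(W,W',X;j) = \sum_{r\in\BZ} X^r \int_{N_m\backslash \{g\in G_m : v(\det g) = r\}} H(g)\, W'(g)\, dg\,.$$
Since $H$ is compactly supported modulo $U_m$, only finitely many $r$ contribute; and since, by hypothesis, $\Psi(W,W',X;j) = 0$ as a formal series, every coefficient vanishes. Summing over $r$ yields $\langle H, W'\rangle = 0$ in $A \otimes_{W(k)} \qlb$ for every $W' \in \ScW(\tau,\psi^{-1})$ and every irreducible generic integral $\tau \in \Rep_{\qlb}(G_m)$.

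Finally I apply Theorem~\ref{vanishingthm}. If $H \neq 0$, it would produce some irreducible generic integral $\qlb$-representation $V'$ of $G_m$ with integral structure $V$ and a Whittaker function $W' \in \ScW(V^{\iota},\psi_{\zlb}^{-1})$ with $\langle H, W'\rangle \neq 0$. But the $\iota$-twist preserves irreducibility, genericity, and integrality, so $V^{\iota}$ is itself one of the representations $\tau$ covered by our hypothesis; this contradicts the vanishing established in the previous paragraph. Hence $H = 0$, and evaluating at $g = I_m$ gives the claim. The main obstacle is the verification that $H$ lies in $\cInd_{U_m}^{G_m}\psi_A$ rather than merely $\Ind_{U_m}^{G_m}\psi_A$: the compact support requires using the co-Whittaker hypothesis on $\pi_1,\pi_2$ (specifically the behavior on the mirabolic) rather than merely the smoothness and Whittaker equivariance of $W^1, W^2$. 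Matching the class of test representations $\tau$ in the hypothesis with those arising from Theorem~\ref{vanishingthm} via the $\iota$-twist is straightforward but must be noted explicitly.
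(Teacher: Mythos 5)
Your overall strategy---reduce to a statement about a single test function $H$ on $G_{n-t-1}$, show that $\langle H,W'\rangle$ vanishes for all relevant $W'$, then invoke Theorem~\ref{vanishingthm}---matches the paper, and your observation that the $\iota$-twist of the representation produced by Theorem~\ref{vanishingthm} is again irreducible, generic, and integral (and hence covered by the hypothesis) is exactly the paper's argument. However, there is a genuine gap in the middle step.

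You need $H$ to lie in $\cInd_{U_m}^{G_m}\psi_A$ (compactly supported modulo $U_m$, $m=n-t-1$), and you assert this follows from the mirabolic restriction properties of co-Whittaker modules. This is not justified and is false for general elements of the Whittaker model. The fact that $\ScW(\pi_i,\psi)|_{P_n}$ \emph{contains} $\cInd_{U_n}^{P_n}\psi_A$ says nothing about arbitrary $W^1,W^2$; it is not true that every Whittaker function of a co-Whittaker (or even of an irreducible generic) module is compactly supported modulo $U_n$ upon restriction to the mirabolic --- consider the spherical Whittaker function of an unramified principal series of $G_2$, whose restriction to $\diag(F^\times,1)$ is supported on all of $\{|a|\leq 1\}$. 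More to the point, the restriction of a Whittaker function on $G_n$ to $\diag(g,I_{n-m})$ with $g\in G_m$, $m<n$, forces (via the Iwasawa decomposition and the $\psi$-equivariance) only \emph{upper} bounds on the torus entries; there is no lower bound, so $H$ need not be compactly supported modulo $U_m$. Your definition of $H$ by integrating out $x$ does not repair this: the $x$-integral is over a compact set for each fixed $g$, so it does not change which $g$ lie in the support.

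The paper avoids this entirely by never forming the full $H$. Instead, for each $r\in\BZ$ it sets $H_r := \bigl(W^1(\,\cdot\,)-W^2(\,\cdot\,)\bigr)\Phi_r(g)$, where $\Phi_r$ is the characteristic function of $\{g : v(\det g)=r\}$ (and, in the case $0<j\leq t$, it has first replaced $W^i$ by the new Whittaker function $W_0^i = \int_\Omega \rho(n_x)W^i\,dx$, reducing to $j=0$). For each fixed $r$, the upper bound on the torus entries \emph{together with} the determinant constraint $\prod|a_i|=q^{-r}$ forces all $|a_i|$ to be bounded below as well, so $H_r$ \emph{is} compactly supported modulo $U_{n-t-1}$. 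The coefficient of $X^r$ in $\Psi$ being zero is exactly $\langle H_r,W'\rangle=0$, and applying Theorem~\ref{vanishingthm} to each $H_r$ separately gives $H_r=0$ for all $r$, hence the conclusion. Replacing your single application of the vanishing theorem to $H$ with these coefficient-by-coefficient applications to $H_r$ closes the gap; the rest of your argument, in particular the $\iota$-twist matching, is correct.
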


\begin{proof}
For $j=0$, the assumption is that 
\begin{align}
\begin{split}
\ & \sum_r\int_{(U_{n-t-1} \bs G_{n-t-1})^r} W^1\begin{pmatrix}
g & 0 \\
0 & I_{t+1}
\end{pmatrix}\otimes W'(g) X^r dg\\
= \ & \sum_r\int_{(U_{n-t-1} \bs G_{n-t-1})^r} W^2\begin{pmatrix}
g & 0 \\
0 & I_{t+1}
\end{pmatrix} \otimes W'(g) X^r dg\,,
\end{split}
\end{align}
for all $W'$, where $(U_{n-t-1} \bs G_{n-t-1})^r$ is the subset of $U_{n-t-1} \bs G_{n-t-1}$ consisting of elements whose determinant has valuation $r$. The conclusion is that $W^1(I_n)=W^2(I_n)$. 
Indeed, recall that given $r>0$ the relations
\[ \lvert \det g\rvert=r\,,\, W^i  \left(\begin{array}{cc} g & 0 \\ 0 & I_{t+1} \end{array}\right) \neq 0\]
imply that $g$ is in a set compact modulo $U_{n-t-1}$. Taking $H_r$ to be $$\left(W^1 \left(\begin{array}{cc} g & 0 \\ 0 & I_{t+1} \end{array}\right)-W^2 \left(\begin{array}{cc} g & 0 \\ 0 & I_{t+1} \end{array}\right)\right)\Phi_r(g)\,,$$ where $\Phi_r$ is the characteristic function of $(U_{n-t-1} \bs G_{n-t-1})^r$, we have, by assumption, that $\langle H_r,W'\rangle=0$ for all Whittaker functions $W'$ of irreducible generic integral representations $\tau$, and hence of the contragredient representations $\tau^{\iota}$. On the other hand, by Theorem \ref{vanishingthm}, if $H_r$ were nonzero, there would be some $V'$, $V$ as in Theorem \ref{vanishingthm} such that $\langle H_r,W'\rangle\neq 0$ in $A\otimes_{W(k)}\zlb$ for some $W'\in \ScW(V^{\iota},\psi_{\zlb}^{-1})$. Then taking $W'\otimes 1$ in $\ScW((V')^{\iota},\psi_{\qlb}^{-1})$, we have $\langle H_r,W'\otimes 1\rangle=\langle H_r,W'\rangle\otimes 1\neq 0$, a contradiction, so $H_r$ must equal zero. Since $H_r=0$ for all $r$, we then have $W^1-W^2=0$ on $G_{n-t-1}$.

For $0 < j \leq t$, one observes that there is a compact subset $\Omega$ of $M_{j \times (n-t-1)}(F)$ such that for all $g \in G_{n-t-1}$ and $i=1,2$, 
$$W^i \begin{pmatrix}
g & 0 & 0 \\
X & I_j & 0\\
0 & 0 & I_{t+1-j}
\end{pmatrix}
\neq 0$$
implies that $X \in \Omega$. 
Thus, for $i=1,2$, there is an element $W^i_0 \in \CW(\pi_i, \psi)$ such that for all $g \in G_{n-t-1}$
$$\int_{M_{j \times (n-t-1)}(F)} W^i \begin{pmatrix}
g & 0 & 0 \\
X & I_j & 0\\
0 & 0 & I_{t+1-j}
\end{pmatrix} dX= W^i_0 \begin{pmatrix}
g & 0 & 0 \\
0 & I_j & 0\\
0 & 0 & I_{t+1-j}
\end{pmatrix}.$$
We are therefore reduced to the case $j=0$. 
\end{proof}

\begin{rmk}
There is a gap in the proof of \cite[Theorem 1.1]{M16}. \emph{A priori}, infinitely many different extensions $\ScO$ of $W(k)$ could be necessary as $W_i$ and $m$ vary in \cite[\S 6.1]{M16}. Thus, the argument in that paper only proves the following slightly weaker result: ``if $\gamma(V_1\times V',X,\psi)=\gamma(V_2\times V',X,\psi)$ for all irreducible generic integral representations $V'$ of $G_{n-1}$ over $\qlb$, then $V_1$ and $V_2$ have the same supercuspidal support.'' The authors believe \cite[Theorem 1.1]{M16} is correct as stated; this will be addressed in future work.
\end{rmk}

At the cost of taking larger rings $B=e'\ScZ_n$, for primitive idempotents $e'$ of $\ScZ_n$, but without any hypotheses on $A$, the collection of Whittaker functions over $B$ can detect a subring in which $\phi_1$ takes values, essentially by duality.
We recall the following result in \cite{HM16}. 

\begin{thm}[Cor 3.6, \cite{HM16}]
\label{dualityprop}
Let $A'$ be a $W(k)$-subalgebra of $A$, and suppose $H \neq 0$ is an element of $\cInd_{U_n}^{G_n}\psi_A$. If $\langle H, W'\rangle$ lies in $A'\otimes e'\ScZ_n$ for all primitive idempotents $e'$ of $\ScZ_n$ and all $W' \in \CW(e'W_n,\psi^{-1})$, then $H$ lies in $\cInd_{U_n}^{G_n}\psi_{A'}$.
\end{thm}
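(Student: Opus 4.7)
The plan is a local duality argument: detect the pointwise values of $H$ by pairing against Whittaker functions of the universal co-Whittaker modules $e'W_n$, and then show these values lie in $A'$ using faithful flatness of the Bernstein center.

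First, I would reduce to a single Bernstein component. The decomposition $\cInd_{U_n}^{G_n}\psi = \bigoplus_{e'} e'W_n$ over primitive idempotents $e'$ of $\ScZ_n$ yields a finite decomposition $H = \sum_{e'} e'H$, since $H$ is smooth and compactly supported modulo $U_n$. The hypothesis is preserved upon replacing $H$ by $e'H$, so I may assume $H$ lies in $e'\cInd_{U_n}^{G_n}\psi_A = e'W_n \otimes_{W(k)} A$ for a single primitive idempotent $e'$.

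Next, I would exploit the universal property of $e'W_n$. By Theorem~\ref{universalcowhitt}, $e'W_n$ is co-Whittaker as an $e'\ScZ_n[G_n]$-module, so $\CW(e'W_n, \psi^{-1})$ consists of $e'\ScZ_n$-valued functions on $G_n$. Choose a compact open subgroup $K$ small enough that $H$ is right $K$-invariant, and pick representatives $g_1, \ldots, g_r \in G_n$ for the finitely many cosets in $U_n \backslash G_n / K$ on which $H$ is nonzero. For each index $j$, I would construct $W'_j \in \CW(e'W_n, \psi^{-1})$ supported on $U_n g_j K$ with $W'_j(g_j) = e'$ (the identity of $e'\ScZ_n$). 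Since $q$ is a unit in $W(k)$, $\vol(K) \in W(k)^{\times}$, and
\begin{align*}
\langle H, W'_j\rangle = \vol(K) \cdot H(g_j) \otimes e' \ \in\ A \otimes_{W(k)} e'\ScZ_n,
\end{align*}
which by hypothesis lies in $A' \otimes_{W(k)} e'\ScZ_n$.

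Finally, I would extract the value $H(g_j)$. Under the $W(k)$-algebra structure map $W(k) \to e'\ScZ_n$, the element $e'$ is the image of $1$, so $H(g_j)\otimes e' = H(g_j) \otimes 1$ under the natural identification. The ring $e'\ScZ_n$ is $\ell$-torsion-free (hence flat over the DVR $W(k)$) and admits nonzero characteristic-$\ell$ residue fields coming from the inertial pair $(L, \pi)$ classifying $e'$, so it is faithfully flat over $W(k)$. Applying faithful flatness to $M = A/A'$, the natural map $M \to M \otimes_{W(k)} e'\ScZ_n$ is injective, which forces $H(g_j) \in A'$ for every $j$, so $H \in \cInd_{U_n}^{G_n}\psi_{A'}$. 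The main obstacle is the middle step: producing test Whittaker functions $W'_j$ supported on a prescribed double coset and taking the prescribed value $e'$ at $g_j$. This requires exploiting the structure of $e'W_n$ as the universal co-Whittaker $e'\ScZ_n[G_n]$-module and the concrete realization of its Whittaker model as $e'\ScZ_n$-valued functions; once this construction is in hand, the rest of the argument is formal.
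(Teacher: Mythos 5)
The statement you are proving is quoted from \cite{HM16} (Corollary 3.6), so the present paper does not supply a proof; I'll assess your argument on its own terms. Your overall outline — reduce to a single Bernstein component, recover the pointwise values of $H$ by pairing against carefully chosen test Whittaker functions, then use faithful flatness of $e'\ScZ_n$ over $W(k)$ to push $H(g_j)$ into $A'$ — is a reasonable shape, and the final flatness step is carried out correctly. But the key step, constructing $W'_j$, cannot work as stated.

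You ask for $W'_j \in \CW(e'W_n,\psi^{-1})$ supported on a single double coset $U_n g_j K$ with $W'_j(g_j)=e'$. This is impossible. Since $e'W_n$ is co-Whittaker over $e'\ScZ_n$, the natural map $e'\ScZ_n\to\End_{e'\ScZ_n[G_n]}(e'W_n)$ is an isomorphism, so the center $Z_n$ of $G_n$ acts on $e'W_n$ (and on its Whittaker model) by a character $\omega:Z_n\to (e'\ScZ_n)^{\times}$ valued in units. Hence for any Whittaker function $W'$ with $W'(g_j)\ne 0$, one has $W'(zg_j)=\omega(z)W'(g_j)\ne 0$ for every $z\in Z_n$; since $Z_n\cap U_n=\{1\}$ and $Z_n\cap K$ is compact, $zg_j$ eventually leaves $U_n g_j K$, so the support of $W'$ is never compact modulo $U_n$. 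In other words, $\CW(e'W_n,\psi^{-1})\cap\cInd_{U_n}^{G_n}\psi^{-1}=0$, and the test functions you need do not exist. The weaker requirement $W'_j(g_i)=\delta_{ij}e'$ runs into the same obstruction whenever two of the $g_i$ differ by a central element. The Kirillov-model fact that $\CW(e'W_n,\psi^{-1})\vert_{P_n}\supset\cInd_{U_n}^{P_n}\psi^{-1}_{e'\ScZ_n}$ does not resolve this: $P_n$ has trivial intersection with $Z_n$, and a Whittaker function is not freely prescribable on $G_n\setminus P_n Z_n$ from its restriction to $P_n$.

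This is not a repairable cosmetic issue but a different (and infeasible) route. The actual argument in \cite{HM16} avoids constructing test functions altogether: it establishes a completeness statement for the pairing $\langle\cdot,\cdot\rangle$ against $\CW(e'W_n,\psi^{-1})$ using the structure of the integral Bernstein center (much in the spirit of Theorem~\ref{vanishingthm}, which is also a completeness-of-Whittaker-models result proved by nontrivial algebro-geometric input, not by explicit test vectors), and then deduces the descent statement by a formal flatness argument applied to the $W(k)$-module $A/A'$ — your step 5 is essentially the correct endgame, but it has to be fed by that abstract completeness theorem rather than by hand-built test functions.
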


\begin{lem}
\label{vanishinglemdescent}
Let $A$ be any Noetherian $W(k)$-algebra and $A'\subset A$ a sub-algebra, and let $\pi$ be a co-Whittaker $A[G_n]$-module. Suppose $t\leq n-2$ and $0\leq j\leq t$. Suppose that for $W$ in $\CW(\pi,\psi)$, the power series $\Psi(W,W',X;j)$ takes coefficients in $A'$ for all primitive idempotents $e'$ of $\ScZ_{n-t-1}$, and all $W' \in \CW(e'W_{n-t-1},\psi^{-1})$. Then $$\int W\begin{pmatrix}
I_{n-t-1} & 0 & 0 \\
X & I_j & 0\\
0 & 0 & I_{t+1-j}
\end{pmatrix} dX \text{ is in }A'\,,$$
where the integrals are over $X \in M_{j \times (n-t-1)}(F)$. 
\end{lem}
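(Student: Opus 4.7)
The proof will parallel the structure of Lemma~\ref{vanishinglem}, replacing the appeal to Theorem~\ref{vanishingthm} (which detects vanishing via pairings against $\qlb$-valued Whittaker functions) with an appeal to Theorem~\ref{dualityprop} (which detects descent of coefficients via pairings against universal Whittaker families).

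First I would reduce to the case $j=0$. Exactly as in the proof of Lemma~\ref{vanishinglem}, there is a compact subset $\Omega \subset M_{j\times(n-t-1)}(F)$ such that
$W\begin{pmatrix} g & 0 & 0 \\ X & I_j & 0 \\ 0 & 0 & I_{t+1-j} \end{pmatrix}$ vanishes unless $X \in \Omega$, so the integral
$$W_0\begin{pmatrix} g & 0 \\ 0 & I_{t+1} \end{pmatrix} := \int_{M_{j\times(n-t-1)}(F)} W\begin{pmatrix} g & 0 & 0 \\ X & I_j & 0 \\ 0 & 0 & I_{t+1-j} \end{pmatrix} dX$$
defines an element $W_0 \in \CW(\pi,\psi)$, and by construction $\Psi(W,W',X;j) = \Psi(W_0,W',X;0)$. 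The hypothesis transfers to $W_0$ at $j=0$, and the desired conclusion becomes $W_0(I_n) \in A'$, so it suffices to treat the case $j=0$.

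Assume $j=0$. For each $r \in \BZ$ let $\Phi_r$ denote the characteristic function of $\{g \in G_{n-t-1}: v(\det g) = r\}$ and set
$$H_r(g) := W\begin{pmatrix} g & 0 \\ 0 & I_{t+1} \end{pmatrix} \Phi_r(g).$$
Then $H_r$ is $(U_{n-t-1},\psi)$-equivariant on the left, and by the standard fact that Whittaker functions of co-Whittaker modules have support compact modulo $U_{n-t-1}$ on each valuation stratum, $H_r$ lies in $\cInd_{U_{n-t-1}}^{G_{n-t-1}}\psi_A$. Unwinding definitions, the coefficient of $X^r$ in $\Psi(W,W',X;0)$ is
$$\langle H_r,W'\rangle = \int_{U_{n-t-1}\backslash G_{n-t-1}} H_r(g) \otimes W'(g)\, dg,$$
which by hypothesis lies in $A' \otimes e'\ScZ_{n-t-1}$ for every primitive idempotent $e'$ of $\ScZ_{n-t-1}$ and every $W' \in \CW(e'W_{n-t-1},\psi^{-1})$. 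Applying Theorem~\ref{dualityprop} to each $H_r$ yields $H_r \in \cInd_{U_{n-t-1}}^{G_{n-t-1}}\psi_{A'}$. Evaluating at $g = I_{n-t-1}$ with $r=0$ gives $W(I_n) = H_0(I_{n-t-1}) \in A'$, which is the desired conclusion.

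The main technical point is simply verifying that each $H_r$ genuinely lies in the compactly induced space $\cInd_{U_{n-t-1}}^{G_{n-t-1}}\psi_A$, which is the same finiteness property of Whittaker functions used in the proof of Lemma~\ref{vanishinglem}. Once that is in hand, Theorem~\ref{dualityprop} converts the descent hypothesis on the power-series coefficients into a descent statement for the compactly induced function $H_r$, and evaluation at the identity closes the argument. No new ingredients beyond Theorem~\ref{dualityprop} and the reduction already present in Lemma~\ref{vanishinglem} are required.
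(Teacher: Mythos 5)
Your proof is correct and follows essentially the same route as the paper: you define $H_r$ by restricting $W$ to the stratum where $\det$ has valuation $r$, observe that $H_r\in\cInd_{U_{n-t-1}}^{G_{n-t-1}}\psi_A$, identify $\langle H_r,W'\rangle$ with the $X^r$-coefficient of $\Psi(W,W',X;0)$, and then apply Theorem~\ref{dualityprop} to conclude $H_r$ is $A'$-valued. The only cosmetic difference is that you perform the reduction to $j=0$ up front, whereas the paper handles $j=0$ first and then reduces the general case to it at the end; the substance is identical.
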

\begin{proof}
For $j=0$, the assumption is that 
\begin{align}
\ & \sum_r\int_{(U_{n-t-1} \bs G_{n-t-1})^r} W\begin{pmatrix}
g & 0 \\
0 & I_{t+1}
\end{pmatrix}\otimes W'(g) X^r dg
\end{align}
is in $A'[[X]][X^{-1}]$ for all $W'$, where $(U_{n-t-1} \bs G_{n-t-1})^r$ is the subset of $U_{n-t-1} \bs G_{n-t-1}$ consisting of elements whose determinant has valuation $r$. The conclusion is that $W(I_n)$ is in $A'$. Indeed, recall that given $r>0$ the relations
\[ \lvert \det g\rvert=r\,,\, W  \left(\begin{array}{cc} g & 0 \\ 0 & I_{t+1} \end{array}\right) \neq 0\]
imply that $g$ is in a set compact modulo $U_{n-t-1}$. Taking $$H_r = W \left(\begin{smallmatrix} g & 0 \\ 0 & I_{t+1} \end{smallmatrix}\right)\Phi_r(g)\,,$$
where $\Phi_r$ is the characteristic function of $(U_{n-t-1} \bs G_{n-t-1})^r$, we have, by assumption, that $\langle H_r,W'\rangle\in A'\otimes e'\ScZ_{n-t-1}$ for all Whittaker functions $W'$ of $e'W_{n-t-1}$, and for all $e'$. By Proposition \ref{dualityprop} $H_r$ must take values in $A'$. Since this is true for all $r$, we then have $W \left(\begin{smallmatrix} g & 0 \\ 0 & I_{t+1} \end{smallmatrix}\right)$ is in $A'$.

For $0 < j \leq t$, one observes that there is a compact subset $\Omega$ of $M_{j \times (n-t-1)}(F)$ such that for all $g \in G_{n-t-1}$, 
$$W\begin{pmatrix}
g & 0 & 0 \\
X & I_j & 0\\
0 & 0 & I_{t+1-j}
\end{pmatrix}
\neq 0$$
implies that $X \in \Omega$. 
Thus, there is an element $W_0 \in \CW(\pi, \psi)$ such that for all $g \in G_{n-t-1}$
$$\int_{M_{j \times (n-t-1)}(F)} W \begin{pmatrix}
g & 0 & 0 \\
X & I_j & 0\\
0 & 0 & I_{t+1-j}
\end{pmatrix} dX= W_0 \begin{pmatrix}
g & 0 & 0 \\
0 & I_j & 0\\
0 & 0 & I_{t+1-j}
\end{pmatrix}.$$
We are therefore reduced to the case $j=0$. 
\end{proof}

\section{Proof of Theorem \ref{main1}}
\label{section:proofmain1}

In this section, we prove Theorem \ref{main1}, following the proof of the \cite[Theorem 1.3]{JL16}.

Let $A$ be a finite-type $W(k)$-algebra which is reduced and $\ell$-torsion free. 
Let $\pi_1$ and $\pi_2$ be co-Whittaker $A[G_n]$-modules with the same central character $\omega$. 
Let $P$ be the maximal parabolic subgroup of $G_n$ with Levi subgroup $G_{n-1} \times G_1$. 
Let $Z=Z_n$ and $U=U_n$, and $V_0$ be the canonical sub-module of $\pi_i$ which is isomorphic to c-${ \rm Ind}_{ZU}^{P} \omega\psi_A$ (see \cite[Lemma 3.1]{HM16}). We have
\begin{equation}\label{sec3equ1}
W^1_v(p)=W^2_v(p)\,,\, \forall p \in P\,,\,\forall v \in V_0\,,
\end{equation}
\begin{equation}\label{sec3equ2}
W^i_v(gp)=W^i_{\rho(p)v}(g)\,,\, \forall g \in G_n\,,\, \forall p \in P\,,\, \forall v \in V_0\,,\, i=1,2\,.
\end{equation}

We recall the decomposition of $G_n$ into double cosets of $U$ and $P$ as in \cite{Ch06}:
$$G_n = \dot\bigcup_{i=0}^{n-1}U \alpha^i P\,,\text{ where }\alpha=\begin{pmatrix}
0 & I_{n-1}\\
1 & 0
\end{pmatrix}.$$
Note that $\alpha^i=\left(\begin{smallmatrix}
0 & I_{n-i}\\
I_i & 0
\end{smallmatrix}\right)$, in particular, $\alpha^0=\alpha^n=I_n$. 

\begin{defn}\label{defn}
For each double coset $U \alpha^i P$, $0 \leq i \leq n-1$, we call $i$ the {\it height} of the double coset. We say that $\pi_1$ and $\pi_2$ {\it agree} at height $i$ if 
$$
W^1_v(g)=W^2_v(g)\,,\, \forall g \in U\alpha^i P\,,\, \forall v \in V_0\,.$$
\end{defn}

By \eqref{sec3equ1}, $\pi_1$ and $\pi_2$ agree at height 0.
The following lemma, which is the analogue of \cite[Lemma 3.1]{Ch06} with the same proof, gives a characterization of $\pi_1$ and $\pi_2$ agreeing at height $i$.

\begin{lem}\label{lem5}
$\pi_1$ and $\pi_2$ agree at height $i$
if and only if
 $$W^1_v(\alpha^i)=W^2_v(\alpha^i)\,,\, \forall v \in V_0\,.$$
\end{lem}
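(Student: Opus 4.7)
The plan is straightforward: reduce the test for agreement on the entire double coset $U\alpha^i P$ to the single representative $\alpha^i$, using the Whittaker equivariance on the left under $U$ and the identity \eqref{sec3equ2} to shift $P$-action on the right into an action on $v \in V_0$.

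For the ``only if'' direction, taking $g = \alpha^i$ (which lies in $U\alpha^i P$ via $u = p = I_n$) immediately yields $W^1_v(\alpha^i) = W^2_v(\alpha^i)$ for all $v \in V_0$.

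For the ``if'' direction, suppose $W^1_v(\alpha^i) = W^2_v(\alpha^i)$ for all $v \in V_0$. Let $g \in U\alpha^i P$ and decompose $g = u\alpha^i p$ with $u \in U$ and $p \in P$. The Whittaker transformation property under $U$, together with \eqref{sec3equ2}, gives
$$W^i_v(u\alpha^i p) \;=\; \psi(u)\, W^i_v(\alpha^i p) \;=\; \psi(u)\, W^i_{\rho(p)v}(\alpha^i), \qquad i=1,2.$$
The key observation is that $V_0 \cong \mathrm{c\text{-}Ind}_{ZU}^{P}\omega\psi_A$ is by construction a $P$-submodule of $\pi_i$, so $\rho(p)v \in V_0$. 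Applying the hypothesis to the vector $\rho(p)v \in V_0$ yields $W^1_{\rho(p)v}(\alpha^i) = W^2_{\rho(p)v}(\alpha^i)$, and multiplying through by $\psi(u)$ gives $W^1_v(g) = W^2_v(g)$ on all of $U\alpha^i P$.

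There is essentially no obstacle here: once the stability of $V_0$ under $P$ is noted, the argument is a one-line manipulation using the two equivariance properties, exactly parallel to \cite[Lemma 3.1]{Ch06}. The only thing to be careful of is not confusing the $U$-left action (which introduces the character $\psi$) with the $P$-right action (which gets absorbed into a new vector of $V_0$ via \eqref{sec3equ2}).
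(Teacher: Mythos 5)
Your proof is correct and follows exactly the route the paper intends: the paper states this lemma is proved just as \cite[Lemma 3.1]{Ch06}, and that argument is precisely the reduction you give, using left $U$-equivariance of Whittaker functions together with \eqref{sec3equ2} to push the $P$-action into a new vector of $V_0$. The one thing worth making explicit (which you correctly note) is that $V_0\cong\cInd_{ZU}^P\omega\psi_A$ is a $P$-stable subspace, so that $\rho(p)v\in V_0$ and the hypothesis applies to it.
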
 

\begin{defn}
For $1 \leq t \leq n-1$, we say $\pi_1$ and $\pi_2$ satisfy hypothesis $\CH_t$ if $\gamma(\pi_1\times \tau,X,\psi)=\gamma(\pi_2\times \tau,X,\psi)$ for all irreducible generic integral representations $\tau\in \Rep_{\qlb}(G_t)$. We say $\pi_1$ and $\pi_2$ satisfy hypothesis $\CH_{\leq s}$ if they satisfy $\CH_t$ for all $t\leq s$.
\end{defn}

The following lemma is the analogue of \cite[Proposition 3.1]{Ch06} with same proof. 

\begin{lem}\label{lem7}
Let $t$ with $1 \leq t \leq n-1$. If $\pi_1$ and $\pi_2$ satisfy hypothesis $\CH_t$, then
they agree at height $t$. 
\end{lem}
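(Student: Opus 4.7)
The plan is to follow \cite[Proposition~3.1]{Ch06} by inducting on $t$: the base case $t=0$ is \eqref{sec3equ1}, and I would inductively assume agreement at heights $<t$. By Lemma~\ref{lem5}, it would suffice to show $W^1_v(\alpha^t)=W^2_v(\alpha^t)$ for every $v\in V_0$.

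First, I would observe that for any irreducible generic integral $\tau\in\Rep_{\qlb}(G_t)$, any $W'\in\CW(\tau,\psi^{-1})$, and any $0\leq j\leq n-t-1$, the integrand of $\Psi(W^i_v,W',X;j)$ involves matrices
\[
\begin{pmatrix} g & 0 & 0 \\ x & I_j & 0 \\ 0 & 0 & I_{n-t-j} \end{pmatrix}, \qquad g\in G_t,\ x\in M_{j\times t}(F),
\]
whose last row is $(0,\ldots,0,1)$ (since $n-t-j\geq 1$), so they lie in $P$; hence \eqref{sec3equ1} yields $\Psi(W^1_v,W',X;j)=\Psi(W^2_v,W',X;j)$. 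Next, I would apply Theorem~\ref{fnleqn} together with $\CH_t$ to deduce $\Psi(\omega_{n,t}\wt{W^1_v},\wt{W'},Y;j')=\Psi(\omega_{n,t}\wt{W^2_v},\wt{W'},Y;j')$ for every $0\leq j'\leq n-t-1$ and every $\wt{W'}\in\CW(\tau^\iota,\psi)$. Varying $\tau^\iota$ over all irreducible generic integral $\qlb$-representations of $G_t$ and applying Lemma~\ref{vanishinglem} (with $\psi\leftrightarrow\psi^{-1}$ and its parameter $t$ set to $n-t-1$), I would conclude that, for each $0\leq j'\leq n-t-1$,
\[
\int_{M_{j'\times t}(F)}\omega_{n,t}\wt{W^1_v}\begin{pmatrix}I_t & 0 & 0\\ X & I_{j'} & 0\\ 0 & 0 & I_{n-t-j'}\end{pmatrix}dX = \int_{M_{j'\times t}(F)}\omega_{n,t}\wt{W^2_v}\begin{pmatrix}I_t & 0 & 0\\ X & I_{j'} & 0\\ 0 & 0 & I_{n-t-j'}\end{pmatrix}dX.
\]

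Finally, using $\omega_{n,t}\wt{W^i_v}(h)=W^i_v(\omega_n\omega_{n,t}h^\iota)$, I would rewrite each integrand as $W^i_v$ evaluated on a matrix that, under the Bruhat decomposition $G=\bigsqcup_s U\alpha^sP$, is sorted by height. The inductive hypothesis eliminates strata with $s<t$, while the Whittaker equivariance $W^i_v(ug)=\psi(u)W^i_v(g)$ and the $V_0$-equivariance $W^i_v(gp)=W^i_{\rho(p)v}(g)$ (together with the $P$-stability of $V_0$) collapse the remaining $U\alpha^tP$ contribution to a scalar multiple of $W^1_v(\alpha^t)-W^2_v(\alpha^t)$. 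The combinatorial bookkeeping of this last step — showing that precisely the $\alpha^t$-stratum survives after combining all $j'$ and integrating over $X$ — is the main obstacle and is the heart of Chai's argument; verifying that the paper's co-Whittaker and $\ell$-adic family setup imposes no new complications (beyond those already addressed by Lemma~\ref{vanishinglem}) is the remaining verification.
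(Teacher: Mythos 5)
The opening steps of your proposal match what the paper is doing (the paper notes the proof is the ``same proof'' as Chen's Proposition~3.1): the integrand matrices in $\Psi(W^i_v,W',X;j)$ lie in $P$ because their last row is $(0,\ldots,0,1)$ (taking $j=n-t-1$ already suffices), so \eqref{sec3equ1} gives $\Psi(W^1_v,W',X;j)=\Psi(W^2_v,W',X;j)$; the functional equation of Theorem~\ref{fnleqn} together with $\CH_t$ then transfers this to the dual side; and Lemma~\ref{vanishinglem} applies with the parameter substitution you describe ($t\mapsto n-t-1$, which needs $t\ge 1$, with $\psi$ and $\psi^{-1}$ swapped). So you have correctly identified the mechanism.

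What goes wrong is the setup and the ending. The induction on $t$ and the ``inductive hypothesis'' of agreement at heights $<t$ are both spurious: the lemma assumes only $\CH_t$ for the single $t$ in question, and Chen's Proposition~3.1 proof uses no such induction (you may be conflating it with Proposition~\ref{prop3}, which is the statement that does propagate agreement upward through the heights). More importantly, the final step is not a Bruhat-stratification or Chai-style combinatorial argument; it is a one-line evaluation. Applying Lemma~\ref{vanishinglem} with $j'=0$ yields $\omega_{n,t}\wt{W^1_v}(I_n)=\omega_{n,t}\wt{W^2_v}(I_n)$ for all $v\in V_0$. Your own unwinding formula gives $\omega_{n,t}\wt{W^i_v}(I_n)=W^i_v(\omega_n\omega_{n,t})$, and one checks $\omega_n\omega_{n,t}=\alpha^t p$ with $p=\left(\begin{smallmatrix}\omega_t&0\\ 0&I_{n-t}\end{smallmatrix}\right)\in P$; by the equivariance \eqref{sec3equ2}, replacing $v$ by $\rho(p)^{-1}v$ (a bijection of $V_0$) converts this into $W^1_v(\alpha^t)=W^2_v(\alpha^t)$ for all $v\in V_0$, which by Lemma~\ref{lem5} is exactly agreement at height $t$. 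No strata need to be eliminated, no lower heights are invoked, and the ``main obstacle'' you flag does not exist.
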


The following proposition is an analogue of \cite[Proposition 3.6]{JL16}, which allows us to prove Theorem \ref{main1}    
 inductively. 

\begin{prop}\label{prop3}
Assume that $\pi_1$ and $\pi_2$ satisfy hypothesis $\CH_{\leq [\frac{n}{2}]}$. Let $t$ with $[\frac{n}{2}] \leq t \leq n-2$. Suppose that for any $s$ with $0 \leq s \leq t$, the representations $\pi_1$ and $\pi_2$ agree at height $s$. Then they agree at height $t+1$.  
\end{prop}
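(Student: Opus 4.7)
The plan is to adapt the strategy of the proof of \cite[Proposition 3.6]{JL16}, replacing the harmonic-analytic vanishing arguments used there by Lemma \ref{vanishinglem}. Set $m = n-t-1$. Since $\lfloor n/2 \rfloor \le t \le n-2$, we have $1 \le m \le \lfloor n/2 \rfloor$, so hypothesis $\CH_{\le \lfloor n/2 \rfloor}$ yields
\begin{equation*}
\gamma(\pi_1 \times \tau, X, \psi) = \gamma(\pi_2 \times \tau, X, \psi)
\end{equation*}
for every irreducible generic integral $\tau \in \Rep_{\qlb}(G_m)$. By Lemma \ref{lem5}, proving agreement at height $t+1$ amounts to showing $W^1_v(\alpha^{t+1}) = W^2_v(\alpha^{t+1})$ for every $v \in V_0$.

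First I would invoke Theorem \ref{fnleqn}: for any $v \in V_0$, any $W' \in \CW(\tau, \psi_{\qlb}^{-1})$, and any $0 \le j \le t$, the equality of gamma factors together with the common central character turns the functional equation into the equivalence
\begin{align*}
& \Psi(W^1_v, W', X; j) = \Psi(W^2_v, W', X; j) \\
& \qquad \Longleftrightarrow \Psi(\omega_{n,m}\wt{W^1_v}, \wt{W'}, q^{t}/X; t-j) = \Psi(\omega_{n,m}\wt{W^2_v}, \wt{W'}, q^{t}/X; t-j).
\end{align*}
Next I would use the assumed agreement at heights $0,1,\ldots,t$ to verify one side of this equivalence automatically. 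Concretely, the decomposition $G_n = \bigcup_{i=0}^{n-1} U\alpha^i P$ partitions the zeta integral into pieces indexed by height, and the hypothesis forces the pieces of heights $0,\ldots,t$ to cancel in the difference $\Psi(W^1_v,W',X;j) - \Psi(W^2_v,W',X;j)$.

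At that point I would choose $j$ carefully and exploit the $P$-equivariance relation (\ref{sec3equ2}) so that, on the dual side of the functional equation, only the double coset $U\alpha^{t+1}P$ contributes to the difference between $\pi_1$ and $\pi_2$. Applying Lemma \ref{vanishinglem} to the resulting equality, valid for every $W'$ in the Whittaker model of every irreducible generic integral $\qlb$-representation of $G_m$, one concludes that a specific partial unipotent integral of $W^1_{v'} - W^2_{v'}$ vanishes for all $v' \in V_0$. Specializing $v' = \rho(p) v$ for suitable $p \in P$ and invoking Lemma \ref{lem5} and the $P$-equivariance then collapses that integral to the single value $W^1_v(\alpha^{t+1}) - W^2_v(\alpha^{t+1})$, establishing the claim.

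The main obstacle is the combinatorial bookkeeping in the middle step: after applying the involution $W \mapsto \omega_{n,m}\wt{W}$, the Bruhat strata $U\alpha^i P$ reshuffle, and one must verify that the strata of heights $\le t$ on the original side correspond, after the involution, to contributions that are automatically controlled, leaving $U\alpha^{t+1}P$ as the sole uncontrolled contribution on the dual side. This is exactly where the bound $\lfloor n/2 \rfloor$ enters: only for $t \ge \lfloor n/2 \rfloor$ is $m = n-t-1$ small enough that $\CH_m$ is available, and only then do the ``known'' and ``unknown'' heights line up correctly across the functional equation. The final passage from the integrated identity back to a pointwise equality of Whittaker functions is carried out by Lemma \ref{vanishinglem}, which is where the hypothesis that $A$ is reduced and $\ell$-torsion free ultimately enters through Theorem \ref{vanishingthm}.
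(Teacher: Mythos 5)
Your overall shape matches the paper's: establish an equality of zeta integrals on the ``tilde'' side from agreement at heights $\le t$, pass through the functional equation (available because $n-t-1 \le \lfloor n/2\rfloor$), and then apply Lemma~\ref{vanishinglem}. However, there are two points where the proposal skips real work, and the second one is a genuine gap.

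First, you describe the input as ``the Bruhat strata of heights $0,\ldots,t$ cancel in the difference.'' The precise fact that makes this work is Lemma~\ref{lem2} (quoted from \cite[Lemma~3.5]{JL16}): for $\lfloor n/2\rfloor \le t \le n-2$, agreement at heights $0,\ldots,t$ implies the concrete pointwise identity (\ref{lem2eqn}) on a specific family of matrices. The paper then conjugates by $\omega_n$ and applies the involution $W\mapsto\wt{W}$ to rewrite that pointwise identity directly as an equality of the integrands on the tilde side with $j = 2t+2-n$. This is the part your ``bookkeeping in the middle step'' gestures at, and it is indeed a nontrivial computation, but it is not where the main difficulty lies.

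Second, and this is the gap: after applying Lemma~\ref{vanishinglem}, one obtains only the integrated identity
\[
\int_{M_{(n-t-2)\times(n-t-1)}(F)} X^1_v\begin{pmatrix} I_{n-t-1}&0&0\\ X&I_{n-t-2}&0\\ 0&0&I_{2t+3-n}\end{pmatrix}dX
= \int_{M_{(n-t-2)\times(n-t-1)}(F)} X^2_v\begin{pmatrix} I_{n-t-1}&0&0\\ X&I_{n-t-2}&0\\ 0&0&I_{2t+3-n}\end{pmatrix}dX
\]
for all $v\in V_0$, where $X^i_v=\rho(\alpha^{t+1})W^i_v$. You assert that ``specializing $v'=\rho(p)v$ for suitable $p\in P$ and invoking Lemma~\ref{lem5} and $P$-equivariance then collapses that integral to the single value $W^1_v(\alpha^{t+1})-W^2_v(\alpha^{t+1})$.'' That is false as stated: a single application of $P$-equivariance does not collapse the integral. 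Going from this integrated identity to the pointwise identity $X^1_v(I_n)=X^2_v(I_n)$ is the content of Lemma~\ref{lem3} (the analogue of \cite[Lemma~4.1]{JL16}), which is itself a substantial argument: one decomposes the unipotent domain $\mathfrak{X}$ as a product of root subgroups, pairs each factor $X_a$ (resp.\ $Z_b$) with a dual subgroup $Y_a\subset U\cap\alpha^{t+1}P\alpha^{-(t+1)}$ (resp.\ $T_b$), and runs a three-step descending/ascending induction, each step of which uses a Fourier inversion argument over the coefficient ring (Lemma~\ref{inversionformula}) to strip off one factor at a time. The $P$-equivariance you mention supplies the one-parameter families needed to form the Fourier transforms, but it does not by itself eliminate any integration variable. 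You also need to check that Fourier inversion works over the $W(k)$-algebra $A$, which is a technical point the paper handles explicitly. Without Lemma~\ref{lem3} (or an equivalent argument), the proposal does not establish $W^1_v(\alpha^{t+1})=W^2_v(\alpha^{t+1})$ and hence does not prove the proposition.
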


Before proving the proposition, we apply it to the proof of our main result as follows.

\textbf{Proof of Theorem \ref{main1}}. 
Assume that $\pi_1$ and $\pi_2$ satisfy hypothesis $\CH_{\leq [\frac{n}{2}]}$. By Lemma \ref{lem7}, $\pi_1$ and $\pi_2$ agree at heights $1, 2, \ldots, [\frac{n}{2}]$. Note that by \eqref{sec3equ1}, $\pi_1$ and $\pi_2$ already agree at height 0.
Applying Proposition \ref{prop3} repeatedly for $t$ from $[\frac{n}{2}]$ to $n-2$, we obtain that $\pi_1$ and $\pi_2$ also agree at heights $[\frac{n}{2}]+1, \ldots, n-1$. 
Hence, $\pi_1$ and $\pi_2$ agree at all the heights $0, 1, \ldots, n-1$, that is, $W^1_v(g)=W^2_v(g)$, for all $g \in G_n$ and for all $v \in V_0$. Since there is some $v\in V_0$ such that $W_v^i(g)\in A^{\times}$ for some $g$, we have $\ScW(\pi_1,\psi)=\ScW(\pi_2,\psi)$ by Lemma \ref{equivalencecriterion}. This completes the proof of Theorem \ref{main1}. \qed

Therefore, we only need to prove Proposition \ref{prop3}, which will be done in Section 6.

\section{Proof of Proposition \ref{prop3}}
\label{section:proofofprop3}


First, we recall \cite[Lemma 3.5]{JL16}, which characterize certain supports of Whittaker functions $W_v^1, W_v^2$, for $v \in V_0$. 

\begin{lem}[\cite{JL16}, Lemma 3.5]\label{lem2}
Let $t$ with $[\frac{n}{2}] \leq t \leq n-2$. Suppose that for any $s$ with $0 \leq s \leq t$ the representations $\pi_1$ and $\pi_2$ agree at height $s$. Then the following equality holds
for all $X \in M_{(n-t-1) \times (2t+2-n)}(F)$, all $g \in G_{n-t-1}$, and all $v \in V_0$:
\begin{equation}
\label{lem2eqn}
W_v^1 \begin{pmatrix}
I_{n-t-1} & 0 & 0\\
0 & I_{2t+2-n} & 0\\
0 & X & g
\end{pmatrix}=W_v^2 \begin{pmatrix}
I_{n-t-1} & 0 & 0\\
0 & I_{2t+2-n} & 0\\
0 & X & g
\end{pmatrix}.
\end{equation}
\end{lem}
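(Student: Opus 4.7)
The plan is to reduce the lemma to a purely combinatorial statement: the matrix
$$h = h(X,g) := \begin{pmatrix} I_m & 0 & 0 \\ 0 & I_{n-2m} & 0 \\ 0 & X & g \end{pmatrix}, \qquad m := n-t-1,$$
lies in $\bigcup_{s=0}^{t} U\alpha^s P$ for every $g\in G_m = G_{n-t-1}$ and every $X \in M_{m\times(n-2m)}(F)$. Once this is established, Definition~\ref{defn} together with the hypothesis that $\pi_1$ and $\pi_2$ agree at heights $0,1,\ldots,t$ immediately gives $W_v^1(h)=W_v^2(h)$ for all $v\in V_0$, which is exactly \eqref{lem2eqn}.

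To pin down heights concretely, I would use the model $G_n/P \cong \{\text{hyperplanes in }F^n\}$ given by $g\mapsto gV_0$, where $V_0 := \mathrm{span}(e_1,\ldots,e_{n-1})$ is the hyperplane stabilized by the standard upper parabolic $P$. A direct check shows $\alpha^iV_0 = \{x_{n-i}=0\}$ for $0\le i\le n-1$. If $gV_0$ is cut out by a linear form $\ell=\sum_j c_j x_j$, then $\ell g$ must vanish on $V_0$, hence is proportional to $e_n^T$; equivalently, $\ell$ is proportional to $e_n^T g^{-1}$, that is, to the last row of $g^{-1}$. The right action of $u^{-1}$ on row vectors (for $u\in U$ upper unitriangular) is itself upper unitriangular, and therefore preserves the position of the first nonzero entry. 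Consequently, in the Bruhat decomposition $G_n=\dot\bigcup_{i=0}^{n-1}U\alpha^iP$, the height of $g$ equals $n-p$, where $p$ denotes the position of the first nonzero entry of the last row of $g^{-1}$.

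It remains to compute this invariant for $h$. Writing $h = \diag(I_m, h')$ with $h' = \left(\begin{smallmatrix} I_{n-2m} & 0 \\ X & g \end{smallmatrix}\right) \in G_{n-m}$, block inversion yields $(h')^{-1} = \left(\begin{smallmatrix} I_{n-2m} & 0 \\ -g^{-1}X & g^{-1} \end{smallmatrix}\right)$, whence
$$e_n^T h^{-1} \;=\; \bigl(\,\underbrace{0,\ldots,0}_{m}\,,\; -r_m\,X,\; r_m\,\bigr)\;\in\; F^n,$$
where $r_m \in F^m$ is the last row of $g^{-1}$, a nonzero vector since $g$ is invertible. The leading block of $m$ zeros forces the first nonzero entry of $e_n^T h^{-1}$ to sit at some position $p\ge m+1 = n-t$, giving height $n-p\le t$. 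Hence $h\in U\alpha^sP$ for some $s\in\{0,1,\ldots,t\}$, completing the argument. I do not expect any substantive obstacle: the proof is entirely combinatorial, relying only on the standard description of $U$-orbits on $G_n/P$ via the Bruhat decomposition and on elementary block inversion; the hypothesis $t\ge\lfloor n/2\rfloor$ is not actually needed for this lemma (it ensures $n-2m\ge 0$ so the displayed matrix makes sense, and is relevant only when Lemma~\ref{lem2} is invoked in Proposition~\ref{prop3}).
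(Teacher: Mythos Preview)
Your proposal is correct and takes essentially the same approach as the paper: as noted in the proof of Proposition~\ref{JL3.6analogue}, the content of \cite[Lemma 3.5]{JL16} is precisely that the matrix $h(X,g)$ lies in $U\alpha^{s}P$ with $s\le t$, and your computation of the height via the first nonzero entry of the last row of $h^{-1}$ is a clean way to see this.
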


\textbf{Proof of Proposition \ref{prop3}}.

The proof is similar to that of \cite[Proposition 3.6]{JL16}. 

Fix any pair $(X,g)$ as in Lemma~\ref{lem2}. Then, from (\ref{lem2eqn}) we get
\begin{equation*}
W_v^1 \left(\omega_n \omega_n \begin{pmatrix}
I_{n-t-1} & 0 & 0\\
0 & I_{2t+2-n} & 0\\
0 & X & g
\end{pmatrix}\right)=W_v^2 \left(\omega_n \omega_n \begin{pmatrix}
I_{n-t-1} & 0 & 0\\
0 & I_{2t+2-n} & 0\\
0 & X & g
\end{pmatrix}\right),
\end{equation*}
that is, 
\begin{equation*}
W_v^1 \left(\omega_n \begin{pmatrix}
g_1 & X_1 & 0\\
0 & I_{2t+2-n} & 0\\
0 & 0 & I_{n-t-1}
\end{pmatrix}\omega_n\right)=W_v^2 \left(\omega_n \begin{pmatrix}
g_1 & X_1 & 0\\
0 & I_{2t+2-n} & 0\\
0 & 0 & I_{n-t-1}
\end{pmatrix}\omega_n\right),
\end{equation*}
where $g_1 = \omega_{n-t-1} g \omega_{n-t-1}$, $X_1=\omega_{n-t-1} X \omega_{2t+2-n}$. 

Note that 
$$\omega_n = 
\begin{pmatrix}
\omega_{n-t-1} & 0\\
0 & I_{t+1}
\end{pmatrix} \omega_{n,n-t-1} \alpha^{t+1}\,,\text{ with }\omega_{n,n-t-1}:=\begin{pmatrix}
I_{n-t-1} & 0\\
0 & \omega_{t+1}
\end{pmatrix}.$$
Hence,
\begin{align*}
\ & W_v^1 \left(\omega_n \begin{pmatrix}
g_2 & X_1 & 0\\
0 & I_{2t+2-n} & 0\\
0 & 0 & I_{n-t-1}
\end{pmatrix}\omega_{n,n-t-1} \alpha^{t+1}\right)\\
= \ &  W_v^2 \left(\omega_n \begin{pmatrix}
g_2 & X_1 & 0\\
0 & I_{2t+2-n} & 0\\
0 & 0 & I_{n-t-1}
\end{pmatrix}\omega_{n,n-t-1} \alpha^{t+1}\right),
\end{align*}
where $g_2 = \omega_{n-t-1} g$, $X_1=\omega_{n-t-1} X \omega_{2t+2-n}$. 

Let $X_v^i= \rho(\alpha^{t+1}) W^i_{v}$. Then
\begin{align*}
& X_v^1 \left(\omega_n \begin{pmatrix}
g_2 & X_1 & 0\\
0 & I_{2t+2-n} & 0\\
0 & 0 & I_{n-t-1}
\end{pmatrix}\omega_{n,n-t-1}\right)\\
= \ & X_v^2 \left(\omega_n \begin{pmatrix}
g_2 & X_1 & 0\\
0 & I_{2t+2-n} & 0\\
0 & 0 & I_{n-t-1}
\end{pmatrix}\omega_{n,n-t-1}\right).
\end{align*}
Recall that $\wt{X^i_v}(g)=X^i_v(\omega_n {}^t g^{-1})$. 
Then, 
\begin{align*}
& \wt{X_v^1} \left(\begin{pmatrix}
g_3 & 0 & 0\\
X_2 & I_{2t+2-n} & 0\\
0 & 0 & I_{n-t-1}
\end{pmatrix}\omega_{n,n-t-1}\right)\\
= \ & \wt{X_v^2} \left(\begin{pmatrix}
g_3 & 0 & 0\\
X_2 & I_{2t+2-n} & 0\\
0 & 0 & I_{n-t-1}
\end{pmatrix}\omega_{n,n-t-1}\right),
\end{align*}
where $g_3 = \omega_{n-t-1} {}^t g^{-1}$, $X_2=-\omega_{2t+2-n} {}^t X \omega_{n-t-1} g_1$. 

Therefore, 
\begin{align*}
& \wt{X_v^1} \left(\begin{pmatrix}
g & 0 & 0\\
X & I_{2t+2-n} & 0\\
0 & 0 & I_{n-t-1}
\end{pmatrix}\omega_{n,n-t-1}\right)\\
= \ & \wt{X_v^2} \left(\begin{pmatrix}
g & 0 & 0\\
X & I_{2t+2-n} & 0\\
0 & 0 & I_{n-t-1}
\end{pmatrix}\omega_{n,n-t-1}\right),
\end{align*}
for all $X \in M_{(2t+2-n) \times (n-t-1)}(F)$, all $g \in G_{n-t-1}$, and all $v \in V_0$. Then, by the definition of the zeta integral $\Psi$, 
we have the following equality:
\begin{align*}
& \Psi(\rho(\omega_{n,n-t-1})(\wt{X^1_v}), \wt{W_{\tau}}, \frac{q^t}{X}; 2t+2-n) \\
= \ & \Psi(\rho(\omega_{n,n-t-1})(\wt{X^2_v}), \wt{W_{\tau}}, \frac{q^t}{X}; 2t+2-n)\,,
\end{align*}
for all irreducible co-Whittaker $A'[G_{n-t-1}]$-modules $\tau$, all Whittaker functions $W_{\tau} \in \CW(\tau, \psi^{-1})$, and all $v \in V_0$. 

Since $\pi_1$ and $\pi_2$ satisfy hypothesis $\CH_{\leq [\frac{n}{2}]}$, and $n-t-1 \leq [\frac{n}{2}]$, 
by the functional equation in Theorem~\ref{fnleqn}, we have that 
$$\Psi(X^1_v, W_{\tau}, X; n-t-2) = \Psi(X^2_v, W_{\tau}, X; n-t-2)\,,$$
for all irreducible generic representations $\tau$ of $G_{n-t-1}$, all Whittaker functions $W_{\tau} \in \CW(\tau, \psi^{-1})$, and all $v \in V_0$.
Hence, by Lemma \ref{vanishinglem}, 
\begin{align*}
\int X^1_v \left(\begin{smallmatrix}
I_{n-t-1} & 0 & 0\\
X & I_{n-t-2} & 0\\
0 & 0 & I_{2t+3-n}
\end{smallmatrix}\right) dX
= \  \int X^2_v \left(\begin{smallmatrix}
I_{n-t-1} & 0 & 0\\
X & I_{n-t-2} & 0\\
0 & 0 & I_{2t+3-n}
\end{smallmatrix}\right) dX\,,
\end{align*}
for all $v \in V_0$, the integral being over $M_{(n-t-2) \times (n-t-1)}(F)$. 
We claim (Lemma \ref{lem3} below) that this identity implies in fact
$$X^1_v(I_n)=X^2_v(I_n)\,,\, \forall v \in V_0\,.$$
Taking this for granted at the moment we finish the proof. Indeed, we have then
$$W^1_v(\alpha^{t+1})=W^2_v(\alpha^{t+1})\,,\, \forall v \in V_0\,.$$
Therefore by Lemma \ref{lem5}, $\pi_1$ and $\pi_2$ agree at height $t+1$. 
This concludes the proof of Proposition \ref{prop3}. \qed

In the rest of this section we establish our claim, that is, we prove Lemma \ref{lem3} below, which is an analogue of \cite[Lemma 4.1]{JL16}. One of the basic formulas we will repeatedly use is the Fourier inversion formula. For completeness, we record the well-known fact that the Fourier inversion formula still holds in this setting.

Fix $\psi:F\to W(k)^{\times}$, let $\FY=F^k$ for some integer $k$, and let 
$$\langle,\rangle:\FY^2\to F$$
be a non-degenerate bilinear pairing on $\FY$. We define the Fourier transform 
$\widehat{\Phi}$ of $\Phi\in C_c^{\infty}(\FY,A)$ by 
$$\widehat{\Phi}(Y) := \int_{\FY}\Phi(X)\psi(\langle Y,X\rangle)dX\,,$$
 where $dX=d\mu(X)$ for a $W(k)$-valued Haar measure $\mu$ on $\FY$.

\begin{lem}[Fourier inversion formula]
\label{inversionformula}
Given $\Phi\in C_c^{\infty}(\FY,A)$, then $\widehat{\Phi}$ is also in $C_c^{\infty}(\FY,A)$ and there is a Haar measure $\mu$ on $\FY$ for which the Fourier inversion formula
$\widehat{\widehat{\Phi}}(X)=\Phi(-X)$, holds for all $\Phi\in C_c^{\infty}(\FY,A)$. If $\ell\neq 2$, this Haar measure is unique.
\end{lem}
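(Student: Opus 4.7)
The plan is to reduce the statement to an explicit computation on characteristic functions of cosets of a compact open subgroup $L \subset \FY$ (a ``lattice''), made available by the $A$-linearity of the Fourier transform. Since $\Phi \in C_c^{\infty}(\FY, A)$, one may choose $L$ small enough that $\Phi$ is invariant under translation by $L$, and since the support of $\Phi$ is compact it meets only finitely many $L$-cosets. This yields a finite expansion $\Phi = \sum_{i=1}^N c_i \mathbf{1}_{a_i + L}$ with $c_i \in A$ and $a_i \in \FY$, so it suffices to verify both conclusions of the lemma when $\Phi = \mathbf{1}_{a+L}$.

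The main computation is as follows. Define the dual lattice $L^{*} := \{Y \in \FY : \psi(\langle Y, X \rangle) = 1 \text{ for all } X \in L\}$, which is a compact open subgroup of $\FY$ because $\psi$ is nontrivial and $\langle,\rangle$ is non-degenerate. The substitution $X \mapsto X + a$ gives $\widehat{\mathbf{1}_{a+L}}(Y) = \psi(\langle Y, a \rangle)\,\widehat{\mathbf{1}_{L}}(Y)$. For $Y \in L^{*}$ the integrand is identically $1$ on $L$, so $\widehat{\mathbf{1}_{L}}(Y) = \mathrm{vol}_{\mu}(L)$. For $Y \notin L^{*}$, choose $X_0 \in L$ with $\psi(\langle Y, X_0 \rangle) \neq 1$; translation-invariance of $\mu$ under the shift $X_0$ yields $(1 - \psi(\langle Y, X_0\rangle))\,\widehat{\mathbf{1}_{L}}(Y) = 0$. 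The crucial input is that $\psi$ factors through a finite quotient of $F$ and therefore takes values in $p$-power roots of unity; since $p \neq \ell$, such roots are units in $W(k)$ and the differences $1 - \zeta$ for nontrivial $\zeta$ are themselves units. Hence $\widehat{\mathbf{1}_{L}}(Y) = 0$, and one obtains simultaneously $\widehat{\mathbf{1}_{L}} = \mathrm{vol}_{\mu}(L)\,\mathbf{1}_{L^{*}} \in C_c^{\infty}(\FY, A)$ together with the explicit formula for the transform.

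Iterating the formula, and using the biduality $(L^{*})^{*} = L$ (which follows from Pontryagin duality, or equivalently from the non-degeneracy of $\langle,\rangle$, non-triviality of $\psi$, and a count of indices), one obtains
$$\widehat{\widehat{\mathbf{1}_{a+L}}}(X) = \mathrm{vol}_{\mu}(L)\,\mathrm{vol}_{\mu}(L^{*})\,\mathbf{1}_{a+L}(-X).$$
A chain-of-inclusions argument, using $[L_1 : L_1 \cap L_2] = [(L_1 \cap L_2)^{*} : L_1^{*}]$, shows that $c(\mu) := \mathrm{vol}_{\mu}(L)\,\mathrm{vol}_{\mu}(L^{*})$ depends only on $(\mu, \psi)$, not on $L$. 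Rescaling $\mu$ by $c(\mu)^{-1/2}$, after adjoining the square root to $W(k)$ if necessary, produces a Haar measure for which the inversion formula holds on all $\Phi$. Any two such measures differ by a scalar $\lambda$ with $\lambda^2 = 1$, forcing $\lambda = \pm 1$; when $\ell \neq 2$ the two signs remain distinguishable modulo $\ell$, which is the content of the uniqueness clause. The main point I expect to require care is verifying that $1 - \psi(\cdot)$ is a unit in $W(k)$ at nontrivial arguments, since this is precisely where the hypothesis $p \neq \ell$ enters; past that, the argument proceeds in parallel with the classical complex-valued proof.
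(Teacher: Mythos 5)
The main body of your argument — reducing to characteristic functions of lattice cosets, computing $\widehat{\mathbf 1_{a+L}} = \psi(\langle\cdot,a\rangle)\,\mathrm{vol}_\mu(L)\,\mathbf 1_{L^*}$ by shifting by an $X_0$ with $\psi(\langle Y,X_0\rangle)\neq 1$, iterating with biduality $(L^*)^*=L$, and isolating the normalization constant $c(\mu)=\mathrm{vol}_\mu(L)\mathrm{vol}_\mu(L^*)$ — is the same route the paper takes; the paper simply compresses the vanishing of $\widehat{\mathbf 1_L}$ off $L^*$ into a citation to BH06 \S1.7. Your explicit remark that $1-\zeta$ is a unit of $W(k)$ because $\psi$ is valued in $p$-power roots of unity and $p\neq\ell$ is exactly the point that makes the classical argument go through over $W(k)$, and it is good to have it spelled out.

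Two points in your final paragraph need repair. First, ``adjoining the square root to $W(k)$ if necessary'' is not available: the measure is required to be $W(k)$-valued, since $\widehat\Phi$ must land in $C_c^\infty(\FY,A)$ with $A$ a $W(k)$-algebra. For $\ell\neq 2$ no adjunction is needed, because $k$ is algebraically closed and Hensel's lemma lifts square roots of units along $W(k)\to k$ when $\ell\neq 2$; this is where the paper falls back on the BH06 normalization $\mu(\ScO^k)=q^{kl/2}$. Second, and more substantively, your uniqueness argument does not reach the stated conclusion. You correctly deduce $\lambda^2=1$, hence $\lambda=\pm1$ in the domain $W(k)$, but $W(k)$ has characteristic zero, so $+1\neq -1$ for \emph{every} $\ell$, and $-\mu$ satisfies the same inversion identity as $\mu$. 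The clause ``distinguishable modulo $\ell$'' is not relevant: the measure takes values in $W(k)$, not in $k$, and in any case distinguishability of the two candidate measures would argue \emph{against} uniqueness, not for it. As written your argument yields uniqueness up to sign, uniformly in $\ell$, and does not locate where $\ell\neq2$ enters; some further convention on what is admitted as a ``Haar measure'' in the $W(k)$-valued setting (implicitly supplied by the BH06 normalization that the paper cites) is needed to pin down a single measure.
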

\begin{proof}
The $A$-module $C_c^{\infty}(\FY,A)$ is spanned by the characteristic functions $\Phi_j$ of $a+\varpi^j\FY$, $a\in \FY$, $j\in \BZ$. In addition, the map $X\mapsto \psi(\langle -,X\rangle)$ gives an isomorphism from $\FY$ to the set of characters $\FY\to W(k)^{\times}$ in the usual way (cf. \cite[1.7]{BH06}). The inversion formula follows from the identity $\int_{\FY}\psi(\langle A,B\rangle)dA=0$ unless $B=0$, in which case it is a unit $u$ in $W(k)$. One can choose a Haar measure, depending on $\psi$, such that $u=1$. If $\ell\neq 2$, we may proceed exactly as in \cite[23.1]{BH06}, choosing a square root of $q$ in $W(k)^{\times}$ and letting $\mu(\ScO^k)=q^{kl/2}$,  where $l$ is the level of $\psi$.
\end{proof}

The proof of Lemma \ref{lem3} below goes exactly as that of \cite[Lemma 4.1]{JL16}, we just give the outline. 

\begin{lem}\label{lem3}
Recall that $X_v^i= \rho(\alpha^{t+1}) W^i_{v}$, $i=1,2$. If 
\begin{align}\label{lem3equ1}
\begin{split}
& \int_{M_{(n-t-2) \times (n-t-1)}(F)} X^1_v \begin{pmatrix}
I_{n-t-1} & 0 & 0\\
X & I_{n-t-2} & 0\\
0 & 0 & I_{2t+3-n}
\end{pmatrix} dX \\
= \ & \int_{M_{(n-t-2) \times (n-t-1)}(F)} X^2_v \begin{pmatrix}
I_{n-t-1} & 0 & 0\\
X & I_{n-t-2} & 0\\
0 & 0 & I_{2t+3-n}
\end{pmatrix} dX\,,
\end{split}
\end{align} 
for all $v \in V_0$, 
then $X^1_v(I_n)=X^2_v(I_n)$, for all $v \in V_0$.
\end{lem}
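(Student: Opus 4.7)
The plan is to use Fourier inversion (Lemma~\ref{inversionformula}) to promote the equality of integrals in \eqref{lem3equ1} to the pointwise equality $X_v^1(I_n)=X_v^2(I_n)$. Since the assumption already gives equality of a particular integral of $X_v^i(m(X))$ for all $v\in V_0$, the idea is to use the $P$-module structure of $V_0$ to insert an additive character $\psi(\tr(YX))$ into the integrand, thereby producing a Fourier transform in $Y$. Once the Fourier transforms of $X\mapsto X_v^i(m(X))$ are shown to coincide for all $Y$, inversion will force the integrands themselves to agree, and evaluating at $X=0$ (where $m(0)=I_n$) will yield the conclusion.

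The first step is to construct, for each $Y\in M_{(n-t-1)\times(n-t-2)}(F)$, a unipotent element $p_Y\in P$ such that
\[
X_{\rho(p_Y)v}^i\begin{pmatrix} I_{n-t-1} & 0 & 0\\ X & I_{n-t-2} & 0\\ 0 & 0 & I_{2t+3-n}\end{pmatrix}=\psi(\tr(YX))\,X_v^i\begin{pmatrix} I_{n-t-1} & 0 & 0\\ X & I_{n-t-2} & 0\\ 0 & 0 & I_{2t+3-n}\end{pmatrix}.
\]
This is the standard trick: writing the left-hand side as $W_v^i(m(X)\alpha^{t+1}p_Y)$ using \eqref{sec3equ2}, one rewrites $m(X)\alpha^{t+1}p_Y$ as $u(X,Y)\cdot m(X)\alpha^{t+1}$ for some $u(X,Y)\in U_n$, and then the Whittaker equivariance of $W_v^i$ produces $\psi_{U_n}(u(X,Y))=\psi(\tr(YX))$. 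The element $p_Y$ is chosen inside the unipotent radical of $P$ in the appropriate block so that the commutation with $\alpha^{t+1}$ lands the $Y$-block into a diagonal-adjacent position of $U_n$.

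The second step is to apply \eqref{lem3equ1} with $v$ replaced by $\rho(p_Y)v$: one obtains that the two Fourier transforms
\[
\int_{M_{(n-t-2)\times(n-t-1)}(F)} X_v^i(m(X))\,\psi(\tr(YX))\,dX, \quad i=1,2,
\]
are equal for every $Y$ and every $v\in V_0$. After verifying that $X\mapsto X_v^i(m(X))$ lies in $C_c^{\infty}(M_{(n-t-2)\times(n-t-1)}(F),A)$—which follows from smoothness of $W_v^i$ and the standard asymptotic support properties of Whittaker functions on the mirabolic—Lemma~\ref{inversionformula} yields $X_v^1(m(X))=X_v^2(m(X))$ for all $X$. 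Setting $X=0$ gives $X_v^1(I_n)=X_v^2(I_n)$, as desired.

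The main obstacle is the first step: one must check that the required unipotent $p_Y$ actually lies in $P$ (rather than elsewhere in $G_n$), and that the commutation $m(X)\alpha^{t+1}p_Y=u(X,Y)m(X)\alpha^{t+1}$ does produce exactly the character $\psi(\tr(YX))$ on the superdiagonal. Because the integration block sits just above the ``mirabolic cut'' introduced by $\alpha^{t+1}$, this is delicate bookkeeping with block matrices but is essentially the same computation carried out in \cite[Lemma~4.1]{JL16}, and it goes through verbatim once one replaces scalars by $A$-values, using that $\psi$ is $W(k)^\times$-valued so all relevant characters make sense over $A$.
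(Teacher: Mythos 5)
Your high-level plan — use a unipotent element of $P$ to insert a Fourier kernel and then invert — is the right family of ideas, and it is essentially what the paper does. But the ``one-shot'' version you propose does not work: there is no single $p_Y\in P$ satisfying
\[
X^i_{\rho(p_Y)v}(m(X))=\psi(\tr(YX))\,X^i_v(m(X)) \qquad\text{for all } X,
\]
and the obstruction is precisely why the paper's proof (following \cite[Lemma 4.1]{JL16}) is an iterated, slice-by-slice Fourier argument rather than a single Fourier inversion.

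The problem is the commutation. Writing $\tilde p_Y=\alpha^{t+1}p_Y\alpha^{-(t+1)}$, your argument needs $m(X)\tilde p_Y=u(X,Y)\,m(X')$ with $u(X,Y)\in U_n$. The group $\mathfrak{X}$ generated by the $m(X)$ is a product of root subgroups $X_{e_a-e_b}$ with $n-t\le a\le 2(n-t)-3$ and $1\le b\le n-t-1$. To get the full Fourier kernel $\psi(\tr(YX))$ you would need $\tilde p_Y$ to contain both ``$Y_a$-type'' entries $\xi_{b,a+1}$ (for $b\le a-(n-t)+1$) and ``$T_b$-type'' entries $\xi_{b-1,a}$ (for $a\le b+n-t-2$), since neither family alone covers the dual of all of $\mathfrak X$. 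But these two families do not interact cleanly with $\mathfrak X$: a $Y_a$-type root space $X_{e_{b'}-e_{a'+1}}$ and an $\mathfrak X$-root $X_{e_a-e_b}$ with $a=a'+1$ and $b=b'$ produce, upon multiplying $m(X)\tilde p_Y$, the matrix unit $\xi_{a,b}\xi_{b,a}=\xi_{a,a}$ — a \emph{diagonal} entry. Then $m(X)\tilde p_Y$ is no longer of the form $u\,m(X')$ with $u\in U_n$, and the phase you extract is not an additive character. A concrete instance: take $n=5$, $t=2$, so $\mathfrak X$ is the line through $\xi_{3,1}$ and $\xi_{3,2}$, and the dual you would need has components along $\xi_{1,4}$ and $\xi_{1,3}$. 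The cross product $\xi_{3,1}\xi_{1,3}=\xi_{3,3}$ puts $1+x_1y_2$ on the diagonal of $m(X)\tilde p_Y$, so the decomposition you posit fails whenever $x_1y_2\neq 0$.

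The paper sidesteps this exactly by not trying to Fourier-invert over all of $\mathfrak X$ at once. It strips off one sub-block at a time (first $X_{2(n-t)-3}$, then descending $X_a$, then ascending $Z_b$), each time using only a single $Y_a$ or $T_b$ as the dual and applying Lemma~\ref{inversionformula} in that one slice. After a slice is stripped, the variables that would have produced diagonal cross-terms with the next dual are no longer being integrated, so the Bruhat-type factorization $m(X)\tilde p= u\, m(X')$ with $u\in U_n$ does hold for the remaining block, and the Fourier kernel comes out cleanly. Your step two (checking $X\mapsto X_v^i(m(X))$ is in $C_c^\infty$, applying Lemma~\ref{inversionformula}, evaluating at $X=0$) is fine in principle, but it can only be invoked one slice at a time, not once for the full matrix $X$. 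To repair the argument you would have to carry out the three-step iterated stripping as in the paper (or in \cite[Lemma 4.1]{JL16}), verifying at each stage that the relevant dual root subgroups lie in $U_n\cap\alpha^{t+1}P\alpha^{-(t+1)}$ and that no diagonal cross-terms survive.
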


\begin{proof}
Since $X_v^i= \rho(\alpha^{t+1}) W^i_{v}$, by \eqref{sec3equ2}, 
equality \eqref{lem3equ1} implies that 
\begin{align}\label{lem3equ2}
\begin{split}
& \int_{M_{(n-t-2) \times (n-t-1)}(F)} X^1_v \left(u \begin{pmatrix}
I_{n-t-1} & 0 & 0\\
X & I_{n-t-2} & 0\\
0 & 0 & I_{2t+3-n}
\end{pmatrix} p\right) dX \\
= \ & \int_{M_{(n-t-2) \times (n-t-1)}(F)} X^2_v \left(u \begin{pmatrix}
I_{n-t-1} & 0 & 0\\
X & I_{n-t-2} & 0\\
0 & 0 & I_{2t+3-n}
\end{pmatrix} p\right) dX\,,
\end{split}
\end{align} 
for all $u \in U$, all $p \in \alpha^{t+1} P (\alpha^{t+1})^{-1}$, and all $v \in V_0$. 
Recall that 
$$\alpha^{t+1} = \begin{pmatrix}
0 & I_{n-t-1}\\
I_{t+1} & 0
\end{pmatrix}\,.$$
Hence the $(n-t-1)$-th row of any $p$ in $\alpha^{t+1} P (\alpha^{t+1})^{-1}$ has the form $(0, \dots, 0, a, 0, \ldots, 0)$ with $a \neq 0$ in the $(n-t-1)$-th column. 
Conversely, this condition characterizes the elements of 
$\alpha^{t+1} P (\alpha^{t+1})^{-1}$.
We will use the relation \eqref{lem3equ2} only for  $p\in U\cap \alpha^{t+1} P (\alpha^{t+1})^{-1}$. 

We recall some notation from \cite[Lemma 4.1]{JL16}. 
We denote by $\xi_{i,j}$ the matrix whose only non-zero entry 
is $1$ in the $i$-th row and $j$-th column. Thus $\xi_{i,j} \xi_{j',k}= \delta _{j,j'} \xi_{i,k} \,$.
Given a root $\alpha$ (positive or negative) we denote by $X_\alpha$ the corresponding root subgroup. Thus if $\alpha=e_i-e_j$, for any $a\in F$, the element $I_n+ a \xi_{i,j}$ is in $X_\alpha$.

Set 
\[\mathfrak{X}= \left\{\left( \begin{array}{c c c} I_{n-t-1} & 0 & 0 \\ X & I_{n-t-2} \\ 0&0& I_{2 t + 3 -n} \end{array}\right)\,,\,
X \in M_{(n-t-2) \times (n-t-1)}(F) \right\}.\]
The group $\mathfrak{X}$ is abelian and is the direct product of the groups $X_{e_a-e_b}$ with
\[ n-t\leq a \leq 2(n-t)-3\,,\, 1 \leq b\leq n -t-1 \,.\]
For such a pair $(a,b)$ we have either
\[ b\leq a-(n-t)+1 \,,\]
or
\[ a \le b + n-t-2 \,.\]

Next, we define subgroups of $\mathfrak{X}$ as in \cite[Lemma 4.1]{JL16}. 
For $n -t \leq a \leq 2(n-t)-3$, we define the following subgroup of $\mathfrak{X}$:
\[ X_a= \prod_{1 \leq b \leq a -(n-t) +1} X_{e_a-e_b}\,.\]
We also define a subgroup of $U$  as follows. 
\[  Y_a = \prod_{1 \leq b \leq a -(n-t) +1} X_{e_b -e _{a+1}} \,.\]
As discussed in \cite[Lemma 4.1]{JL16}, $Y_a$ is contained in the subgroup $U \cap \alpha^{t+1} P \alpha ^{-(t+1)}$. 
We can identify $Y_a$ with the dual of $X_a$ as follows: if for $X\in X_a$, $Y\in Y_a$, write
\begin{align*}
X =\ & I_n+ \sum _{1 \leq b \leq a -(n-t) +1} \xi_{a,b} x_b \,,\\
Y =\ & I_n + \sum_{ 1 \leq b \leq a -(n-t) +1} \xi_{b,a+1} y_b\,,
\end{align*}
then set
\[ \langle X,Y\rangle = \sum_{ 1 \leq b \leq a -(n-t) +1} x_b y_b \,.\]

For $2\leq b \leq n-t-1$, we define
\[ Z_b = \prod_{n-t\leq a \leq b+ n-t-2} X_{e_a-e_b} \,.\]
We also define a subgroup of $U$  as follows:
\[ T_b = \prod_{n-t\leq a \leq b+ n-t-2} X_{e_{b-1}-e_a} \,,\]
which is also 
contained in $U \cap \alpha^{t+1}P \alpha^{-(t+1)}$.
Again we can identify $T_b$ with the dual of $Z_b$ as follows: if for $Z\in Z_b$, $T\in T_b$, write
\begin{align*}
Z =\ & I_n+  \sum_{ n-t\leq a \leq b+ n-t-2} \xi _{a,b} z_a \,,\\
T= \ & I_n + \sum_{ n-t\leq a \leq b+ n-t-2} \xi _{b-1,a} t_a\,,
\end{align*}
then set
\[ \langle Z , T\rangle = \sum_{ n-t\leq a \leq b+ n-t-2} z_a y_a \,.\]

The group $\mathfrak{X}$ is the product
$$\prod_{n-t \leq a \leq 2(n-t)-3} X_{a} \prod_{2 \leq b \leq n-t-1} Z_b\,.$$
The identity \eqref{lem3equ1} can be written as follows: for all $v \in V_0$, 
\[\int_{\mathfrak{X}} X_v^1(X) d X = \int_{\mathfrak{X}} X_v^2 (X ) d X \,.\] 
Note that the two functions $X^i_v$ on $\mathfrak{X}$ are smooth and compactly supported. We should keep in mind that
\[ X_{\rho(p)v}^i (X) = X_v^i( X (\alpha^{t+1} p {\alpha^{-(t+1)}} )) \,,\, \forall p \in P\,,\, \forall v \in V_0\,.\]

We now list the three main steps, referring to the proof of \cite[Lemma 4.1]{JL16} for details. For each step, the coefficient ring $A$ is involved in the argument only when the Fourier inversion formula is used at the conclusion of each step.

\textbf{First step.} We show that we have, for all $v \in V_0$, the identity
\[ \int X_v^1(X) d X = \int X_v^2 (X ) d X \, ,\]
where both integrals are over the product
$$\prod_{n-t \leq a \leq 2(n-t)-4} X_{a} \prod_{2 \leq b \leq n-t-1} Z_b\,.$$

By \eqref{lem3equ2}, for all $Y \in Y_{2(n-t)-3}=\prod _{1\leq b \leq n-t-2} X_{e_b -e _{2(n-t)-2}}$ and all $v \in V_0$, we know that
 \[\int X_v^1(XY ) d X = \int X_v^2 (X Y ) d X \,,\] 
where both integrals are over the product
$$\prod_{n-t \leq a \leq 2(n-t)-3} X_{a} \prod_{2 \leq b \leq n-t-1} Z_b\,.$$  

As in the proof of \cite[Lemma 4.1]{JL16}, we get, for all $Y \in Y_{2(n-t)-3}$ and all $v \in V_0$,
\[ \int X_v^1 (X)  \psi_A(\langle X^{2(n-t)-3}, Y\rangle) d X =\int X_v^2 (X) \psi_A(\langle X^{2(n-t)-3}, Y\rangle)  d X \,,\] where $ X^{2(n-t)-3}$ is the projection of $X$ on the subgroup $X_{2(n-t)-3}$. and both integrals are over the product
$$\prod_{n-t \leq a \leq 2(n-t)-3} X_{a} \prod_{2 \leq b \leq n-t-1} Z_b\,.$$
Now we apply the Fourier inversion formula of Lemma~\ref{inversionformula} to the group $\FY=X_{2(n-t)-3}$ and the Schwartz function $\Phi$ on $\FY$ given by 
$$\int(X_v^1(X) - X_v^2(X))dX\,,$$
 where the integral runs over the product $$\prod_{n-t \leq a \leq 2(n-t)-4} X_{a} \prod_{2 \leq b \leq n-t-1} Z_b\,,$$ and the $X_{2(n-t)-3}$ factor of $X$ is the variable of $\Phi$. Then $\widehat{\Phi}$ is identically $0$, so Lemma~\ref{inversionformula} implies $\Phi=0$, completing the First step.

\textbf{Second step.}
 Assume that for $k$ with $n-t\leq k\leq 2(n-t)-4$ and for all $v \in V_0$, we have established the identity 
 \[ \int X^1_v(X) d X = \int X^2_v(X) d X\,,\]
where both integrals are over the product
\[ \prod_{n-t \leq a \leq k\,,\, 2 \leq b \leq n-t-1} X_a Z_ b\,.\]
We show that for all $v \in V_0$, we have the identity
 \[ \int X^1_v(X) d X = \int X^2_v(X) d X\,,\]
where both integrals are over the product
\[ \prod_{n-t \leq a \leq k-1\,,\, 2 \leq b \leq n-t-1} X_a Z_ b\,.\] The Fourier inversion formula is used in a similar way to the First step.

\textbf{Third step.} Applying descending induction on $k$ we arrive at
\[ \int_{\prod_{2 \leq b \leq n-t-1}  Z_b} X_v^1 (Z) d Z = \int_{\prod_{2 \leq b \leq n-t-1}  Z_b} X_v^2 (Z) d Z\,,\, \forall v \in V_0\,. \]
We prove now that for $2\leq k \leq n-t-1$, if we have
\[ \int_{\prod_{ k \leq b \leq n-t-1}Z_b} X_v^1 (Z) d Z = \int_{\prod_{ k \leq b \leq n-t-1}Z_b} X_v^2 (Z) d Z\,,\, \forall v \in V_0\,,\]
then we have 
\[ \int_{\prod_{k+1 \leq b \leq n-t -1} Z_b} X_v^1 (Z) d Z = \int_{\prod_{k+1 \leq b \leq n-t -1} Z_b} X_v^2 (Z) d Z\,,\, \forall v \in V_0\,. \] The Fourier inversion formula is used in a similar way to the First step.
By ascending induction this will complete the proof of the lemma. 
\end{proof}


\section{A Descent Theorem}
\label{section:descent}

Let $(\pi, V)$ be a co-Whittaker $A[G_n]$-module. Let $A'$ be a sub-$W(k)$-algebra of $A$.

\begin{defn}
\begin{enumerate}
\item $\pi$ satisfies hypothesis $\CH_0(A')$ if its central character 
$\omega_{\pi}:Z\to A^{\times}$ factors through the inclusion $(A')^{\times}\subset A^{\times}$.
\item For $t\geq 1$, we say that $\pi$ satisfies hypothesis $\CH_t(A')$ if $\gamma(\pi \times e'W_t,X,\psi)$ has coefficients in $A'\otimes e'\ScZ_t$ for all primitive idempotents $e'$ of $\ScZ_t$. It satisfies $\CH_{\leq s}(A')$ if it satisfies $\CH_t(A')$ for all $t\leq s$.
\end{enumerate}
\end{defn}

In this section we prove the following version of Theorem~\ref{descent:intro}.

\begin{thm}
\label{descent}
Assume that $A$ is a finite extension of $A'$ and $\pi$ satisfies hypothesis $\CH_{\leq [n/2]}(A')$. Then the supercuspidal support map 
$$f_V: \ScZ_n \rightarrow A$$
 factors through the inclusion $A' \subset A$.
\end{thm}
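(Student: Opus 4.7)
The plan is to mirror the proof of Theorem \ref{main1}, with ``$\pi_1$ and $\pi_2$ agree at height $i$'' replaced by the $A'$-valuedness condition ``$\pi$ is $A'$-valued at height $i$,'' and with Lemma \ref{vanishinglem} replaced by its descent analogue, Lemma \ref{vanishinglemdescent}. As in Section \ref{section:proofmain1}, choose the canonical $P$-submodule $V_0\subset V$ isomorphic to $\cInd_{ZU}^{P}\omega_\pi\psi_A$; since hypothesis $\CH_0(A')$ ensures the central character $\omega_\pi$ is $A'$-valued, $V_0$ descends to an $A'$-submodule $V_0'\cong \cInd_{ZU}^{P}\omega_\pi\psi_{A'}$ with $V_0'\otimes_{A'}A=V_0$. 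Say that $\pi$ is \emph{$A'$-valued at height $i$} if $W_v(g)\in A'$ for every $v\in V_0'$ and every $g\in U\alpha^i P$; by (\ref{sec3equ2}), this reduces to checking $W_v(\alpha^i)\in A'$ for $v\in V_0'$, analogous to Lemma \ref{lem5}.

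Height $i=0$ is immediate from $W_v(up)=\psi(u)W_v(p)$ together with the fact that $W_v\mid_P$ is $A'$-valued for $v\in V_0'$ by construction. For heights $1\leq i\leq [\frac{n}{2}]$, an analogue of Lemma \ref{lem7} applied with $\tau=e'W_i$ produces an identity expressing $W_v(\alpha^i)$ in terms of Rankin--Selberg zeta integrals paired against Whittaker functions in $\CW(e'W_i,\psi^{-1})$; the hypothesis $\CH_i(A')$ combined with Theorem \ref{dualityprop} then forces $W_v(\alpha^i)\in A'$.

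The main work lies in the inductive step for heights $[\frac{n}{2}]+1\leq t+1\leq n-1$, modeled on Proposition \ref{prop3}. Assuming $\pi$ is $A'$-valued at heights $0,\ldots,t$, the descent analogue of Lemma \ref{lem2} yields $A'$-valuedness of $W_v$ on the block matrices $\left(\begin{smallmatrix}I_{n-t-1}&0&0\\ 0&I_{2t+2-n}&0\\ 0&X&g\end{smallmatrix}\right)$. Conjugating by $\omega_n$ and applying the functional equation in Theorem \ref{fnleqn} to $X_v:=\rho(\alpha^{t+1})W_v$ translates this into the statement that $\Psi(X_v,W_\tau,X;n-t-2)$ has coefficients in $A'\otimes e'\ScZ_{n-t-1}$ for every primitive idempotent $e'$ of $\ScZ_{n-t-1}$ and every $W_\tau\in \CW(e'W_{n-t-1},\psi^{-1})$; this step invokes hypothesis $\CH_{n-t-1}(A')$, which is available since $n-t-1\leq [\frac{n}{2}]$. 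Lemma \ref{vanishinglemdescent} then yields
$$\int_{M_{(n-t-2)\times(n-t-1)}(F)} X_v\begin{pmatrix}I_{n-t-1}&0&0\\ X&I_{n-t-2}&0\\ 0&0&I_{2t+3-n}\end{pmatrix} dX\in A'$$
for all $v\in V_0'$. A descent analogue of Lemma \ref{lem3}, carried out by the same three-step Fourier inversion argument and invoking Lemma \ref{inversionformula} to transfer $A'$-valuedness from paired integrals to the Schwartz functions themselves (the $W(k)$-valued Haar measure preserves $A'$-valuedness upon integration of compactly supported $A'$-valued functions), finally gives $X_v(I_n)=W_v(\alpha^{t+1})\in A'$, completing the induction.

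Once $\pi$ is $A'$-valued at every height, $\CW(\pi,\psi)$ contains a $G_n$-stable $A'$-form $\CW'$ with $\CW'\otimes_{A'}A=\CW(\pi,\psi)$ and with some element of $\CW'$ taking a unit value; together with Theorem \ref{universalcowhitt} and the descent version of Lemma \ref{equivalencecriterion} (following \cite[\S 4]{HM16}), this forces the supercuspidal support map $f_V:\ScZ_n\to A$ to factor through $A'$. The hard part will be the rationality subtlety in the inductive step: Theorem \ref{fnleqn} is an identity in $S^{-1}(R[X,X^{-1}])$, whereas hypothesis $\CH_t(A')$ and the premise of Lemma \ref{vanishinglemdescent} are statements about power-series coefficients. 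Reconciling these requires Proposition \ref{rationalityfinite} (i.e., Corollary 4.2 of \cite{HM16}), and it is precisely here that the assumption ``$A$ is finite as a module over $A'$'' is used.
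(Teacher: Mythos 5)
Your proposal follows essentially the same route as the paper's own proof: height $0$ from $\CH_0(A')$, heights $1,\ldots,[n/2]$ via the descent analogue of Chen's proposition (the paper's Proposition~\ref{chen3.1analogue}), the inductive step via a descent analogue of Proposition~\ref{prop3} (the paper's Proposition~\ref{JL3.6analogue}) using Lemma~\ref{vanishinglemdescent} and a descent version of Lemma~\ref{lem3}, with Proposition~\ref{rationalityfinite} reconciling power-series coefficients with rationality, and the final deduction delegated to \cite{HM16} (the paper invokes Theorem~\ref{heightsdescent}). You also correctly pinpointed where the finiteness of $A$ over $A'$ is needed, so the argument matches the paper in both structure and substance.
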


In \cite{HM16}, following version of descent theorem has been proved. 

\begin{thm}[\cite{HM16}, Theorem 3.2]
\label{heightsdescent}
Assume that $A$ is a finite extension of $A'$ and $\pi$ satisfies hypothesis $\CH_{\leq n-1}(A')$. 
Then $f_V$ factors through the inclusion $A' \subset A$.
\end{thm}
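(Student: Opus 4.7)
The plan is to follow the structure of the proof of Theorem~\ref{main1}, replacing ``agreement of $\pi_1$ and $\pi_2$ at height $i$'' with the notion that $\pi$ is \emph{$A'$-valued at height~$i$}, meaning $W_v(g)\in A'$ for every $g\in U\alpha^i P$ and every $v\in V_0$. Using the relation $W_v(gp)=W_{\rho(p)v}(g)$ and the fact that $\psi$ is $W(k)$-valued, this is equivalent to $W_v(\alpha^i)\in A'$ for every $v\in V_0$, an obvious analogue of Lemma~\ref{lem5}. Hypothesis $\CH_0(A')$ immediately gives the statement at height~$0$: on $P$ the Whittaker function of any $v\in V_0$ is forced into the image of the canonical submodule $V_0\cong\cInd_{ZU}^{P}\omega\psi_A$, and is therefore valued in $A'$.

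For heights $1,2,\dots,\lfloor n/2\rfloor$, I would carry out the descent analogue of Lemma~\ref{lem7}: hypothesis $\CH_t(A')$ combined with the functional equation of Theorem~\ref{fnleqn} and Lemma~\ref{vanishinglemdescent} (in place of Lemma~\ref{vanishinglem}) forces $\pi$ to be $A'$-valued at height $t$. This is the descent version of Chai's height analysis and is already the engine of the proof of Theorem~\ref{heightsdescent} from \cite{HM16}, now run only over the restricted range $1\leq t\leq\lfloor n/2\rfloor$.

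The heart of the argument is a descent analogue of Proposition~\ref{prop3}: if $\pi$ satisfies $\CH_{\leq\lfloor n/2\rfloor}(A')$ and is $A'$-valued at heights $0,1,\dots,t$ for some $\lfloor n/2\rfloor\leq t\leq n-2$, then $\pi$ is $A'$-valued at height $t+1$. I would copy Section~\ref{section:proofofprop3} verbatim, with the following translations. Lemma~\ref{lem2} becomes the assertion that
\[
W_v\!\left(\begin{smallmatrix}I_{n-t-1}&0&0\\0&I_{2t+2-n}&0\\0&X&g\end{smallmatrix}\right)\in A'
\]
for all admissible $(X,g)$ and all $v\in V_0$, rather than an equality of two Whittaker values. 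The functional equation applied to $V'=e'W_{n-t-1}$, combined with $\CH_{n-t-1}(A')$ (available because $n-t-1\leq\lfloor n/2\rfloor$), then upgrades this to the statement that $\Psi(X_v,W',X;n-t-2)$ has coefficients in $A'\otimes e'\ScZ_{n-t-1}$ for every primitive idempotent $e'$ of $\ScZ_{n-t-1}$ and every $W'\in\CW(e'W_{n-t-1},\psi^{-1})$. Lemma~\ref{vanishinglemdescent} now plays the role of Lemma~\ref{vanishinglem} and yields
\[
\int_{M_{(n-t-2)\times(n-t-1)}(F)} X_v\!\left(\begin{smallmatrix}I_{n-t-1}&0&0\\X&I_{n-t-2}&0\\0&0&I_{2t+3-n}\end{smallmatrix}\right)dX\in A'
\]
for all $v\in V_0$. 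Finally, Lemma~\ref{lem3} is replaced by its descent analogue, concluding that $X_v(I_n)\in A'$ for all $v\in V_0$, equivalently $W_v(\alpha^{t+1})\in A'$.

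The main obstacle will be the descent analogue of Lemma~\ref{lem3}. Its proof follows the same three-step Fourier inversion argument from \cite{JL16}, but now in the descent language. At each step, the relevant Schwartz function $\Phi\in C_c^{\infty}(\FY,A)$ is produced by letting $v\in V_0$ vary under the action of suitable $p\in\alpha^{t+1}P\alpha^{-(t+1)}\cap U$, via the equivariance $X_{\rho(p)v}(X)=X_v(X\alpha^{t+1}p\alpha^{-(t+1)})$, thereby extracting the full collection of Fourier coefficients $\widehat{\Phi}(Y)$. All of these lie in $A'$ by the inductive step provided by Lemma~\ref{vanishinglemdescent}. Since Lemma~\ref{inversionformula} is valid over any Noetherian $W(k)$-algebra and the Fourier transform of an $A'$-valued compactly supported function is again $A'$-valued, the inversion formula $\Phi(X)=\widehat{\widehat{\Phi}}(-X)$ forces $\Phi$ itself to be $A'$-valued. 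Iterating the descent analogue of Proposition~\ref{prop3} from $t=\lfloor n/2\rfloor$ up to $t=n-2$ then yields that $\pi$ is $A'$-valued at every height $0,1,\dots,n-1$; the closing argument of \cite{HM16} used to deduce Theorem~\ref{heightsdescent} shows that the supercuspidal support map $f_V\colon\ScZ_n\to A$ factors through the inclusion $A'\subset A$.
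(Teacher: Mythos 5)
Theorem~\ref{heightsdescent} is cited from \cite{HM16} and is not reproved in this paper; there is no internal proof to compare against. What you have written is essentially the argument of Section~\ref{section:descent} for the \emph{stronger} Theorem~\ref{descent} (with the restricted range $1\leq t\leq\lfloor n/2\rfloor$), not a proof of the statement you were asked to establish.

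Two issues are worth flagging. First, given that the hypothesis of Theorem~\ref{heightsdescent} is the full $\CH_{\leq n-1}(A')$, the heavy machinery you invoke -- the descent analogue of Proposition~\ref{prop3} (Proposition~\ref{JL3.6analogue}), Lemma~\ref{vanishinglemdescent}, and the three-step Fourier argument (Lemma~\ref{lem3descent}) -- is not needed at all. With $\CH_t(A')$ for every $t=1,\dots,n-1$, one applies Proposition~\ref{chen3.1analogue} directly at each $t$ and obtains $A'$-valuedness at every height with no induction across heights. The bootstrap across heights via Proposition~\ref{JL3.6analogue} and the Fourier descent lemma is precisely what one uses to get away with the weaker hypothesis $\CH_{\leq\lfloor n/2\rfloor}(A')$; here it is redundant.

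Second, and more seriously, your closing sentence defers the crucial step to ``the closing argument of \cite{HM16} used to deduce Theorem~\ref{heightsdescent}.'' That step -- going from ``$\pi$ is $A'$-valued at every height $0,\dots,n-1$'' to ``$f_V$ factors through $A'\subset A$'' -- is the substantive final part of the very theorem you are supposed to be proving; it is also exactly what the paper means when its proof of Theorem~\ref{descent} says that ``Lemma~\ref{heightsdescent} finishes the proof.'' As a proof of Theorem~\ref{heightsdescent} this is circular (or at best, incomplete): you would need to supply that deduction, which involves showing that the $A'$-valued Whittaker functions generate a co-Whittaker $A'[G_n]$-module whose Whittaker model base-changes to that of $\pi$, so that the supercuspidal support descends. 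Without making that step explicit, you have shown only that $\CH_{\leq\lfloor n/2\rfloor}(A')$ yields $A'$-valuedness at all heights -- which is the content of the paper's Propositions~\ref{chen3.1analogue} and~\ref{JL3.6analogue} -- not an independent proof of Theorem~\ref{heightsdescent}.
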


Recall that $P$ is the maximal parabolic subgroup of $G_n$ with Levi subgroup $G_{n-1} \times G_1$, $Z=Z_n$ is the center of $G_n$ and $U=U_n$. 
Also recall that $V_0$ is the canonical sub-module of $V$ which is isomorphic to 
$\cInd_{ZU}^{P}(\omega_{\pi}\psi_A)$.
Let $V_0'$ be the canonical sub-$W(k)$-module of $V_0$ which is isomorphic to  
 $\cInd_{ZU}^{P}(\omega_{\pi}\psi_{A'})$. 
We recall the following decomposition of $G_n$ from Section \ref{section:proofmain1}: 
$$G_n = \dot\bigcup_{i=0}^{n-1}U \alpha^i P\,,\text{ where } 
\alpha=\begin{pmatrix}
0 & I_{n-1}\\
1 & 0
\end{pmatrix}.$$

\begin{defn}
We say $\pi$ is $A'$-valued at height $t$ if $W_v(g)$ lies in $A'$, for all $g \in U\alpha^tP$ and all $v \in V_0'$. Note that if $\CH_0(A')$ is satisfied, $\pi$ is $A'$-valued at height $t$ if and only if $W_v(\alpha^i) \in A'$, for all $v \in V_0'$.
\end{defn}

Note that if $\CH_0(A')$ is satisfied, $\pi$ is $A'$-valued at height 0.

\begin{prop}\label{centralcharactervaluedA'}
Suppose $n\geq 2$ and $\pi$ satisfies $\CH_1(A')$. Then $\pi$ satisfies $\CH_0(A')$.
\end{prop}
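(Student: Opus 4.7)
The plan is to adapt the strategy of Proposition~\ref{centralcharacter} to the descent setting. First, I will specialize $\CH_1(A')$ to obtain that $\gamma(\pi\times\chi,X,\psi)\in A'((X))$ for every smooth character $\chi:F^\times\to W(k)^\times$. Such a $\chi$ determines a $W(k)$-algebra map $f_\chi\colon e'\ScZ_1\to W(k)$ for the primitive idempotent $e'$ of $\ScZ_1$ cutting out the Bernstein component of $\chi$; since the formation of gamma factors commutes with base change, applying $\mathrm{id}_A\otimes f_\chi$ to the element $\gamma(\pi\times e'W_1,X,\psi)\in(A'\otimes e'\ScZ_1)((X))$ yields $\gamma(\pi\times\chi,X,\psi)\in(A'\otimes W(k))((X))=A'((X))$.

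Next, I will combine this with Proposition~\ref{prop:central}: for any $\chi$ of sufficiently large conductor $m\geq m_\pi$ and associated $c\in \Fp^{-m}$,
\[\gamma(\pi\times\chi,X,\psi)=\omega_\pi(c)^{-1}\,\gamma(1_A\times\chi,X,\psi)^n.\]
The Tate local constant computation shows that for ramified $\chi$ the factor $\gamma(1_A\times\chi,X,\psi)$ is a single monomial $uX^k\in W(k)((X))$ whose coefficient $u$ is (a multiple of) a Gauss sum, hence a unit in $W(k)^\times\subset(A')^\times$. Consequently the right-hand side is the monomial $\omega_\pi(c)^{-1}u^n X^{nk}$, and the previous step forces its unique nonzero coefficient $\omega_\pi(c)^{-1}u^n$ to lie in $A'$. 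Dividing by the unit $u^n$ yields $\omega_\pi(c)^{-1}\in A'$ for every $c$ of sufficiently large negative valuation.

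Finally, I will invoke the hypothesis (inherited from Theorem~\ref{descent}) that $A$ is finitely generated as an $A'$-module in order to upgrade this to $\omega_\pi(c)\in(A')^\times$. The general lemma is that any element $\alpha\in A'\cap A^\times$ is automatically a unit in $A'$: by integrality of $\alpha^{-1}\in A$ over $A'$, some monic relation $\beta^k+c_{k-1}\beta^{k-1}+\cdots+c_0=0$ with $c_i\in A'$ is satisfied by $\beta=\alpha^{-1}$, and multiplying through by $\alpha^k$ and rearranging exhibits $\alpha^{-1}=-(c_{k-1}+c_{k-2}\alpha+\cdots+c_0\alpha^{k-1})\in A'$. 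Applied to $\alpha=\omega_\pi(c)^{-1}$, this gives $\omega_\pi(c)\in(A')^\times$ for $c$ of sufficiently negative valuation; since every element of $F^\times$ is the quotient of two such $c$ (as in the proof of Proposition~\ref{centralcharacter}), we conclude $\omega_\pi(F^\times)\subset(A')^\times$, verifying $\CH_0(A')$. The crux is this last upgrade, where module-finiteness of $A$ over $A'$ is indispensable: without it, $A'\cap A^\times$ need not equal $(A')^\times$, and the argument would only yield $\omega_\pi(c)^{-1}\in A'$ rather than $\omega_\pi(c)\in(A')^\times$.
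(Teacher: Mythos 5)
Your proof is correct, and it follows the same overall route as the paper's: specialize $\CH_1(A')$ to characters $\chi$, invoke Proposition~\ref{prop:central} to reduce to the $\GL_1$ factor $\gamma(1_A\times\chi,X,\psi)$, conclude $\omega_\pi(c)^{-1}\in A'$ for $c$ of sufficiently negative valuation, and then propagate to all of $F^\times$ via the ``quotient of two elements'' trick. The one place where you differ is worth highlighting, because it fills what looks like an actual gap in the paper's argument.

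The paper goes directly from ``$\omega_\pi(c)^{-1}\in A'$ for all $c$ of valuation at most $-m_0$'' to ``$\omega_\pi$ is valued in $A'$'' by citing that every element of $F^\times$ is a quotient of two such $c$'s. But writing $z=c_1/c_2$ only gives $\omega_\pi(z)=\omega_\pi(c_1)\omega_\pi(c_2)^{-1}$, where we know $\omega_\pi(c_2)^{-1}\in A'$ but \emph{a priori} only $\omega_\pi(c_1)^{-1}\in A'$, not $\omega_\pi(c_1)\in A'$. One cannot close this circle without some extra input: for instance, with $A'=W(k)$, $A=W(k)[1/p]$, and $\omega_\pi$ unramified with $\omega_\pi(\varpi)=p$, one has $\omega_\pi(c)^{-1}=p^{-v(c)}\in W(k)$ for all $v(c)\le 0$, yet $\omega_\pi(\varpi^{-1})=p^{-1}\notin W(k)$. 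So the implication is genuinely false in general. Your extra lemma --- that $A'\cap A^\times\subset(A')^\times$ whenever $A$ is integral over $A'$, via the monic relation for $\alpha^{-1}$ and multiplication by $\alpha^k$ --- is exactly what is needed to upgrade $\omega_\pi(c)^{-1}\in A'$ to $\omega_\pi(c)\in(A')^\times$. The hypothesis that $A$ is module-finite over $A'$ is available in the intended context (Theorem~\ref{descent:intro} and Theorem~\ref{descent}), and you are right to flag it as indispensable. Your proof should therefore be regarded as a corrected version of the paper's, not merely an alternative. The only other (minor) difference is that you deduce $\omega_\pi(c)^{-1}\in A'$ from the monomial form of the Tate factor for ramified $\chi$, whereas the paper uses only that $\gamma(1_A\times\chi,X,\psi)$ is a unit in $S^{-1}(W(k)[X,X^{-1}])$ (citing \cite[Cor 5.6]{M16}); both arguments are valid and land in the same place.
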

\begin{proof}
If $\chi:F^{\times} \to W(k)^{\times}$ is any smooth character, then it is trivially co-Whittaker and has a supercuspidal support map $f_{\chi}:\ScZ_1 \to W(k)$. By compatibility of the gamma factor with base change, we thus have
$$(id\otimes f_{\chi})(\gamma(\pi\times W_1,X,\psi))=\gamma(\pi\times \chi,X,\psi)$$ has coefficients in $A'\otimes_{W(k)}W(k)$ by hypothesis $\CH_1$. Taking $\chi$ to have sufficiently large conductor $m$, Proposition \ref{prop:central} allows us to conclude $\omega_{\pi}(c)^{-1}\gamma(1_A\times\chi,X,\psi)^n$ has coefficients in $A'\otimes W(k)$ for any $c\in \Fp^{-m}$ satisfying $\chi(1+x)=\psi(cx)$ for $x\in \Fp^{[m/2]+1}$. But since the supercuspidal support of $1_A$ factors through $A'$ (in fact, $W(k)$), $\gamma(1_A\times\chi,X,\psi)$ is valued in $S^{-1}(A'\otimes W(k)[X,X^{-1}])$. Since $\gamma(1_A\times\chi,X,\psi)$ is a unit in this ring (\cite[Cor 5.6]{M16}) we have $\omega_{\pi}(c)^{-1}\otimes 1$ is in $A'\otimes W(k)$, showing $\omega_{\pi}(c)^{-1}$ is in $A'$. For any $c\in \Fp^{-m}\backslash \Fp^{1-m}$, there exists $\chi_c$ of conductor $m$ such that $\chi_c(1+x)=\psi(cx)$ for $x\in\Fp^{[m/2]+1}$. Since any element of $F^{\times}$ can be expressed as the quotient of two elements of valuation at most $-m$, we deduce that $\omega_{\pi}$ is valued in $A'$.
\end{proof}

The functional equation requires that the zeta integrals lie in the ring $S^{-1}R[X,X^{-1}]$ of ``rational functions'', however, we only know this rationality is preserved by \emph{finite} descent, in the following sense.

\begin{prop}[\cite{HM16}, Corollary 4.2]
\label{rationalityfinite}
Suppose $R'$ is a Noetherian $W(k)$-subalgebra of $R$ such that $R$ is finitely generated as an $R'$-module. Let $S'$ be the subset of $R'[X,X^{-1}]$ consisting of polynomials whose first and last nonzero coefficients are units in $R'$. Then $(S')^{-1}R'[X,X^{-1}]$ is the intersection, in $R[[X]][X^{-1}]$, of the subrings $R'[[X]][X^{-1}]$ and $S^{-1}R[X,X^{-1}]$.
\end{prop}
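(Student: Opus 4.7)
The $\supseteq$ direction is immediate: $S'\subseteq S$, $R'\subseteq R$, and every element of $S$ is invertible in $R[[X]][X^{-1}]$ (factoring out the lowest power of $X$ leaves a power series with unit constant term in $R$). So the content lies entirely in the reverse inclusion.

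Given $f$ in the intersection, write $f = p/q$ with $p\in R[X,X^{-1}]$ and $q\in S$. My plan is to construct an auxiliary element $Q\in R'[X,X^{-1}]$ with the two properties (a) $Q/q \in R[X,X^{-1}]$, and (b) $Q\in S'$. Granting such a $Q$, the product $Qf = (Q/q)\,p$ simultaneously lies in $R[X,X^{-1}]$ and in $R'[[X]][X^{-1}]$ (since both $Q$ and $f$ do). The trivial observation that $R[X,X^{-1}] \cap R'[[X]][X^{-1}] = R'[X,X^{-1}]$ — a Laurent polynomial over $R$ whose coefficients happen to lie in $R'$ is a Laurent polynomial over $R'$ — then forces $Qf\in R'[X,X^{-1}]$, and the identity $f = Qf/Q$ exhibits $f$ as a member of $(S')^{-1}R'[X,X^{-1}]$, completing the proof.

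To construct $Q$ I exploit that $R$ is module-finite over $R'$, so after base change $R[X,X^{-1}]$ is a finite $R'[X,X^{-1}]$-module and multiplication by $q$ is an $R'[X,X^{-1}]$-linear endomorphism of it. Cayley--Hamilton for finite modules over a commutative ring yields a monic identity
$$q^d + c_{d-1} q^{d-1} + \cdots + c_1 q + c_0 = 0, \qquad c_i\in R'[X,X^{-1}].$$
Setting $Q := -c_0 = q(q^{d-1} + c_{d-1}q^{d-2} + \cdots + c_1)$ makes (a) automatic. Morally $Q$ is a norm $N_{R/R'}(q)$, and when $R$ is free over $R'$ it is literally the determinant of the matrix $M_q$ representing multiplication by $q$ in a chosen basis.

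The main obstacle is property (b). In the free case it is transparent: writing $q = \sum_{i=m}^{n} a_i X^i$ with $a_m, a_n$ units in $R$, the top and bottom Laurent coefficients of $\det(M_q)$ are $N_{R/R'}(a_n)$ and $N_{R/R'}(a_m)$, both units in $R'$ because the norm of a unit is a unit. For general finite $R/R'$, I would reduce to this by localizing on $\Spec(R')$: being a unit in $R'$ can be checked at every maximal ideal, and the Cayley--Hamilton construction is compatible with localization, so one reduces to the case of a Noetherian local base where Nakayama allows a careful choice of generators (or further reduction via completion) to bring the multiplication matrix into play. This local-to-global verification that the norm-type construction lands in $S'$ is the delicate technical heart of the argument.
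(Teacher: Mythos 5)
Your $\supseteq$ inclusion and the reduction are both correct and cleanly stated: once you produce $Q\in S'$ with $Q/q\in R[X,X^{-1}]$, the observation that $Qf\in R[X,X^{-1}]\cap R'[[X]][X^{-1}]=R'[X,X^{-1}]$ finishes the job. And your verification of the free case is also correct — there $\det M_q$ is the norm of $q$ and its extreme coefficients are $N_{R/R'}(a_n)$ and $N_{R/R'}(a_m)$, both units.

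The gap is in the non-free case, and it is a genuine one, not a routine local-to-global reduction. Two concrete problems. First, the Cayley--Hamilton $Q=\pm\det(M)$ depends on the choice of $R'[X,X^{-1}]$-valued matrix $M$ lifting multiplication-by-$q$ with respect to a chosen generating set $x_1,\dots,x_d$ of $R$. If the $x_i$ are not a basis, the lift is only well defined modulo matrices with image in $K=\ker\bigl((R')^d\to R\bigr)$, and for a maximal ideal $\Fm$ of $R'$ with $K\not\subseteq\Fm(R')^d$ (which must occur whenever $R$ is not free, by Krull intersection), the reduction $\bar{M}$ acts on $(R'/\Fm)^d$ non-canonically through the subspace $\bar K\neq 0$, and the leading coefficient $\det(\bar B_n)$ can vanish for a bad lift. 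So ``$\det(B_n)$ is a unit'' is a property of a specific lift, not of the construction, and you have not identified a lift for which it holds globally. Second, even if you choose a minimal generating set after localizing at each $\Fm$ (so that $\bar M$ \emph{does} represent multiplication by $\bar q$ in a basis, and $\det\bar B_n\neq 0$), you obtain a family of local elements $Q_\Fm\in S'(R'_\Fm)$ that depend on $\Fm$ and need not glue to a global $Q\in S'$; and since $(S')^{-1}R'[X,X^{-1}]$ is a localization of $R'[X,X^{-1}]$, membership in it is not manifestly a local condition on $\Spec R'$. The claim ``being a unit in $R'$ can be checked at every maximal ideal'' is correct but does not apply, because the element whose unit-ness you want to check is itself changing from prime to prime.

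The paper does not reprove this proposition — it is cited verbatim from \cite[Cor.~4.2]{HM16} — so I cannot compare your route against the paper's own argument. But the published proof does not run through a Cayley--Hamilton/norm element; the standard way to handle the non-free case here is to work directly with the linear recurrence satisfied by the power-series coefficients of $f$. Since $q\in S$, the coefficients $f_k$ eventually satisfy a recurrence with coefficients in $R$ whose companion matrix $C$ is invertible over $R$ (both leading and trailing coefficients of $q$ being units). The vectors $\mathbf{v}_k=(f_k,\dots,f_{k+n-1})$ lie in $(R')^n$ because $f\in R'[[X]][X^{-1}]$, and Noetherianity of $R'$ applied to the ascending chain of $R'$-spans of the $\mathbf{v}_k$ produces an eventual recurrence with coefficients in $R'$ and unit constant term; a symmetric argument running the recurrence backwards (using invertibility of $C$) fixes the leading coefficient. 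That argument never needs to produce a single ``norm'' element, which is precisely the thing your construction cannot canonically supply once $R$ fails to be free over $R'$. I would either restrict your claim to the case $R$ free over $R'$, or replace the norm construction with the recurrence argument.
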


In what follows, we will refer to Proposition~\ref{rationalityfinite} when $R=A\otimes e'\ScZ_t$ and $R'$ is the subring $A'\otimes e'\ScZ_{t}$, for some $e'$. Note that this is indeed a subring since $e'\ScZ_t$ is flat over $W(k)$.

The following proposition is the analogue of \cite[Proposition 3.1]{Ch06}.

\begin{prop}
\label{chen3.1analogue}
Suppose $A$ is a finite extension of $A'$, and suppose $\pi$ satisfies $\CH_0(A')$. Let $t$ be an integer with $1\leq t \leq n-1$. If $\pi$ satisfies hypothesis $\CH_t(A')$, then $\pi$ is $A'$-valued at height $t$.
\end{prop}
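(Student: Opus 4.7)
The plan is to adapt the proof of Lemma \ref{lem7} (the equality-of-gamma-factors analogue of the present proposition, modeled on Chen's original Proposition 3.1), systematically replacing Lemma \ref{vanishinglem} by its descent counterpart Lemma \ref{vanishinglemdescent} and replacing equality arguments by $A'$-containment arguments.

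First I would fix a primitive idempotent $e' \in \ScZ_t$, a Whittaker function $W' \in \CW(e'W_t, \psi^{-1})$, and $v \in V_0'$. For each $0 \leq j \leq n-t-1$, the matrix appearing in the integrand of $\Psi(W_v, W', X; j)$ has last row $(0, \dots, 0, 1)$ and therefore lies in the mirabolic $P$; combined with the isomorphism $V_0 \cong \cInd_{ZU}^{P}(\omega_\pi \psi_A)$ and the hypothesis $\CH_0(A')$, this shows that $W_v|_P$ takes values in $A'$ for $v \in V_0'$. Consequently $\Psi(W_v, W', X; j)$ has coefficients in $A' \otimes e'\ScZ_t$ for every $j$ in the admissible range.

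Next I would apply the functional equation of Theorem \ref{fnleqn}. Hypotheses $\CH_t(A')$ and $\CH_0(A')$ ensure that the LHS has coefficients in $A' \otimes e'\ScZ_t$ as an element of $S^{-1}(A \otimes e'\ScZ_t)[X,X^{-1}]$. Since $A$ is finite over $A'$, Proposition \ref{rationalityfinite} forces the RHS $\Psi(\omega_{n,t}\widetilde{W_v}, \widetilde{W'}, q^{n-t-1}/X; n-t-1-j)$ to lie in $(S')^{-1}(A' \otimes e'\ScZ_t)[X, X^{-1}]$; in particular its Laurent power-series expansion has coefficients in $A' \otimes e'\ScZ_t$. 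I then invoke the $\psi^{-1}$-analogue of Lemma \ref{vanishinglemdescent} with parameter $t_{\mathrm{lem}} = n-t-1$ applied to the Whittaker function $\omega_{n,t}\widetilde{W_v} \in \CW(\pi^\iota, \psi^{-1})$, concluding that for each $0 \leq j' \leq n-t-1$,
\begin{equation*}
\int_{M_{j' \times t}(F)} (\omega_{n,t}\widetilde{W_v}) \begin{pmatrix} I_t & 0 & 0 \\ X & I_{j'} & 0 \\ 0 & 0 & I_{n-t-j'} \end{pmatrix} dX \in A'.
\end{equation*}

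Finally, I would extract the pointwise conclusion $W_v(\alpha^t) \in A'$ from these integral identities by a Fourier-inversion argument modeled on Lemma \ref{lem3} and Lemma \ref{inversionformula}. Unfolding the identity $(\omega_{n,t}\widetilde{W_v})(h) = W_v(\omega_n \omega_{n,t} h^\iota)$ and employing the factorization of $\omega_n$ used in the proof of Proposition \ref{prop3} (namely $\omega_n = \diag(\omega_{n-t-1}, I_{t+1}) \omega_{n,n-t-1} \alpha^{t+1}$ with the local definition of $\omega_{n,n-t-1}$ adopted there), one can relate the above integrals to iterated integrals over root subgroups of a Whittaker function translated by $\alpha^t$. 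Iterating Fourier inversion across these root subgroups collapses the integral constraints to the pointwise statement $W_v(\alpha^t) \in A'$. By the definition of $\pi$ being $A'$-valued at height $t$, this completes the argument. The main obstacle is this last Fourier-inversion step: although Lemma \ref{inversionformula} works algebraically over any Noetherian $W(k)$-algebra and so poses no difficulty in the general setting, the matrix bookkeeping required to identify the iterated integral with $W_v(\alpha^t)$ is delicate, involving careful tracking of the support and left $\psi$-equivariance of $W_v$ through several variable changes tied to the Bruhat decomposition—this is the direct $A'$-containment analogue of Lemma \ref{lem3} carried out inside a single representation rather than by comparing two.
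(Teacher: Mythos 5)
Your proposal has the right ingredients in the first two stages (restrict to the mirabolic to get the $A'$-valuedness of $\Psi(W_v,W',X;j)$, then pass through the functional equation with Proposition~\ref{rationalityfinite} to get the $A'$-valuedness on the dual side), but the final step is a significant overcomplication relative to what the paper does. There is no Fourier-inversion argument anywhere in the proof of Proposition~\ref{chen3.1analogue}. The paper takes $j=n-t-1$ on the left-hand side of the functional equation, which forces $j'=n-t-1-j=0$ on the right-hand side. At $j'=0$ the conclusion of Lemma~\ref{vanishinglemdescent} (equivalently Theorem~\ref{dualityprop}, which is what the paper is really invoking) is already pointwise: the integration over $M_{j'\times t}(F)$ is trivial, and one reads off directly that $(\omega_{n,t}\widetilde{W_v})$ restricted to the embedded copy of $G_t$ takes values in $A'$. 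Taking $g=I_t$ and unwinding the $\iota$ and $\omega_n$, this gives $W_v(\alpha^t)\in A'$ immediately. In your write-up, the $j'=0$ case of your displayed family of integral identities \emph{is} this pointwise statement — the integral is over the trivial space $M_{0\times t}(F)$ — so the iterated Fourier-inversion collapse you describe does not need to happen; you already have the answer.

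More substantively, you have conflated two different arguments in the paper. Proposition~\ref{chen3.1analogue} is modeled on Chen's Proposition~3.1, which is the ``easy'' implication: one well-chosen $j$ on the zeta-integral side and the functional equation yield the pointwise statement at $\alpha^t$ with no inversion. The Fourier-inversion machinery of Lemma~\ref{inversionformula}, Lemma~\ref{fourierinversiondescent}, and Lemma~\ref{lem3descent} is used only in the proof of the other proposition, Proposition~\ref{JL3.6analogue} (the analogue of \cite[Proposition~3.6]{JL16}), where one truly does need to collapse integrals over root subgroups to a point. That is a genuinely harder step with distinct hypotheses (agreement/$A'$-valuedness at all heights $0,\dots,t$ rather than just $\CH_t$). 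Importing that machinery here both adds unneeded work and obscures what the Chen-style argument actually buys you: a single, clean application of duality at $j'=0$. As a practical note, you also do not need the statement for all $j$ in $0\le j\le n-t-1$; the single choice $j=n-t-1$ suffices, which is exactly what the paper uses.
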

\begin{proof}
Since $\pi$ satisfies $\CH_0$, by definition of $V_0'$, $W_v(p) \in A'$, $\forall p \in P$, $\forall v \in V_0'$. Hence, 
$$W_v \begin{pmatrix}
g_t &&\\
x &I_{n-t-1}&\\
&&1
\end{pmatrix} \in A'\,, \,\forall v \in V_0'\,,$$
where $g_t$ is any element in $G_t$, $x \in M_{(n-t-1) \times t}$. Then, for any primitive idempotents $e'$ of $\ScZ_t$, 
for any $W' \in \CW(e'W_t, \overline{\psi})$, 
\begin{align*}
& \ \Psi(W,W',X;n-t-1)\\
= & \ \sum_{r\in \BZ}\int_{M_{n-t-1,t}(F)}\int_{N_t\backslash\{g\in G_t:v(\det g)=r\}}                    \left(W\left(\begin{smallmatrix}g &&\\x&I_{n-t-1} &\\&&1 \end{smallmatrix}\right)\otimes W'(g)\right)X^rdg dx\\
\in & \  (A' \otimes e' \ScZ_t)[[X]][X^{-1}]\,.
\end{align*}

Since $A'\subset A$ is finite, by Proposition \ref{rationalityfinite}, $\Psi(W,W',X;n-t-1)$ lies in $(S')^{-1}(A'\otimes e'\ScZ_t)[X,X^{-1}]$. Applying the involution $X \mapsto \frac{q^{n-t-1}}{X}$ gives 
$$\Psi(W,W',\frac{q^{n-t-1}}{X};n-t-1)\in (S')^{-1}(A'\otimes e'\ScZ_t)[X,X^{-1}]\,.$$

By assumption, $\pi$ satisfies hypothesis $\CH_t$, that is, 
$\gamma(\pi \times e'W_t,\frac{q^{n-t-1}}{X},\psi)$ has coefficients in $A'\otimes e'\ScZ_t$ for all primitive idempotents $e'$ of $\ScZ_t$. 
Hence, 
$$\Psi(\omega_{n,t}\wt{W}, \wt{W'}, X; 0) \in (A' \otimes e' \ScZ_t)[[X]][X^{-1}]\,.$$

Therefore, 
$$\wt{W} \left(\begin{pmatrix}
g_t &\\
& I_{n-t}
\end{pmatrix} \begin{pmatrix}
I_t&\\
&\omega_{n-t} 
\end{pmatrix} \right) \in A' \otimes e' \ScZ_t\,, \,\forall g_t \in G_t\,, \,\forall v \in V_0'\,.$$

Take $g_t=I_t$, we get that 
$$\wt{W}  \begin{pmatrix}
I_t&\\
&\omega_{n-t} 
\end{pmatrix} \in A' \otimes e' \ScZ_t\,, \,\forall v \in V_0'\,,$$
that is, 
$${W}  \left( \omega_n \begin{pmatrix}
I_t&\\
&\omega_{n-t} 
\end{pmatrix} \right) \in A' \otimes e' \ScZ_t\,, \,\forall v \in V_0'\,,$$
which is exactly
$${W}  (\alpha^t) \in A' \otimes e' \ScZ_t\,, \,\forall v \in V_0'\,.
$$
Therefore, $\pi$ is $A'$-valued at height $t$. This completes the proof of the proposition. 
\end{proof}

The following proposition is an analogue of \cite[Proposition 3.6]{JL16}.
\begin{prop}
\label{JL3.6analogue}
Assume $A$ is a finite extension of $A'$ and $\pi$ satisfies hypothesis $\CH_{\leq [n/2]}(A')$. Let $t$ be such that $[n/2]\leq t \leq n-2$. Suppose that for any $s$ with $0\leq s \leq t$, $\pi$ is $A'$-valued at height $s$. Then $\pi$ is $A'$-valued at height $t+1$.
\end{prop}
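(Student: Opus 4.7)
The plan is to mirror the proof of Proposition \ref{prop3}, systematically replacing equalities between two Whittaker functions by single-function $A'$-valuedness assertions, and replacing uses of Lemma \ref{vanishinglem} by Lemma \ref{vanishinglemdescent}. The finiteness of $A$ over $A'$ will be used at exactly one point: to apply Proposition \ref{rationalityfinite} when transporting $A'$-valuedness through the functional equation.

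First I would establish the descent analogue of Lemma \ref{lem2}: under the hypotheses of the proposition, for all $X \in M_{(n-t-1)\times(2t+2-n)}(F)$, all $g \in G_{n-t-1}$, and all $v \in V_0'$,
\[
W_v\begin{pmatrix} I_{n-t-1} & 0 & 0 \\ 0 & I_{2t+2-n} & 0 \\ 0 & X & g \end{pmatrix} \in A'.
\]
The proof is the same Bruhat decomposition argument as in \cite[Lem.\ 3.5]{JL16}: one writes $g \in G_{n-t-1}$ through an Iwasawa-style decomposition and uses that $\pi$ is $A'$-valued at heights $0,1,\dots,t$.

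Next, applying the identities $\omega_n = \bigl(\begin{smallmatrix}\omega_{n-t-1} & 0 \\ 0 & I_{t+1}\end{smallmatrix}\bigr)\omega_{n,n-t-1}\alpha^{t+1}$ and setting $X_v = \rho(\alpha^{t+1})W_v$, $\wt{X_v}(g) = X_v(\omega_n\,{}^tg^{-1})$, the matrix manipulations from the proof of Proposition \ref{prop3} convert the $A'$-valuedness just obtained into the statement that
\[
\Psi\bigl(\rho(\omega_{n,n-t-1})\wt{X_v},\, W',\, \tfrac{q^{t}}{X};\, 2t+2-n\bigr) \in (A'\otimes e'\ScZ_{n-t-1})[[X]][X^{-1}]
\]
for every primitive idempotent $e'$ of $\ScZ_{n-t-1}$ and every $W'\in\CW(e'W_{n-t-1},\psi^{-1})$. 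Since $n-t-1\leq [n/2]$, hypothesis $\CH_{n-t-1}(A')$ states that $\gamma(\pi\times e'W_{n-t-1},X,\psi)$ has coefficients in $A'\otimes e'\ScZ_{n-t-1}$. The functional equation of Theorem \ref{fnleqn} then implies that $\Psi(X_v,W',X;n-t-2)$ lies in $S^{-1}(A\otimes e'\ScZ_{n-t-1})[X,X^{-1}]$ and simultaneously in $(A'\otimes e'\ScZ_{n-t-1})[[X]][X^{-1}]$. Because $A$ is finite over $A'$ and $e'\ScZ_{n-t-1}$ is flat over $W(k)$, Proposition \ref{rationalityfinite} (applied to $R=A\otimes e'\ScZ_{n-t-1}$, $R'=A'\otimes e'\ScZ_{n-t-1}$) lets us conclude that $\Psi(X_v,W',X;n-t-2) \in (A'\otimes e'\ScZ_{n-t-1})[[X]][X^{-1}]$ for every such $W'$ and every $v\in V_0'$. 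Lemma \ref{vanishinglemdescent} now gives
\[
\int_{M_{(n-t-2)\times(n-t-1)}(F)} X_v\begin{pmatrix} I_{n-t-1} & 0 & 0 \\ Y & I_{n-t-2} & 0 \\ 0 & 0 & I_{2t+3-n} \end{pmatrix} dY \in A', \quad \forall v\in V_0'.
\]

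The final step is to deduce from this integral identity that $X_v(I_n)\in A'$ for all $v\in V_0'$; by $P$-equivariance (via \eqref{sec3equ2}) and Lemma \ref{lem5}, this gives that $\pi$ is $A'$-valued at height $t+1$. This requires a descent analogue of Lemma \ref{lem3}. Its proof carries over verbatim: one translates by elements of $U\cap \alpha^{t+1}P\alpha^{-(t+1)}$, defines the same abelian subgroups $X_a$, $Y_a$, $Z_b$, $T_b$ together with the dual pairings, and performs the identical three-step descending induction. The only change is that the Fourier inversion formula of Lemma \ref{inversionformula} is applied to the $A$-valued Schwartz function $\Phi$ whose Fourier transform $\widehat{\Phi}$ is known to take values in $A'$ (rather than to be identically zero, as in the converse-theorem setting); since the inversion operator is $W(k)$-linear and preserves the subring $C_c^\infty(\mathfrak{Y},A')\subset C_c^\infty(\mathfrak{Y},A)$, this forces $\Phi$ itself to be $A'$-valued, which propagates the $A'$-valuedness one layer inward at each step. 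Combining Lemmas \ref{lem5} and this analogue of Lemma \ref{lem3} completes the proof.

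The main obstacle is the last step: one must verify carefully that each application of Fourier inversion in the three-step induction of Lemma \ref{lem3} is compatible with descent from $A$ to $A'$. In the converse-theorem proof one uses only the vanishing of $\widehat\Phi$; here the slightly stronger observation that $W(k)$-linear operations preserve $A'$-valuedness is needed, and one must confirm this compatibility survives the recursive extraction of the Fourier coefficient at each step of the induction.
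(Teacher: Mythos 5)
Your proposal is correct and follows essentially the same route as the paper: matrix manipulations reduce the height-$(t+1)$ claim to a statement about the zeta integral $\Psi(\rho(\omega_{n,n-t-1})\wt{X_v}, \wt{W'}, q^t/X; 2t+2-n)$ having coefficients in $A'\otimes e'\ScZ_{n-t-1}$; Proposition~\ref{rationalityfinite} and the functional equation (together with $\CH_{n-t-1}(A')$) transfer this to $\Psi(X_v,W',X;n-t-2)$; Lemma~\ref{vanishinglemdescent} then gives that the partial integral of $X_v$ lands in $A'$; and a descent analogue of Lemma~\ref{lem3} finishes, with the Fourier-inversion step resting on exactly the observation you make (which the paper isolates as a ``Fourier descent formula''): since $\widehat{\widehat{\Phi}}(X)=\Phi(-X)$ and the transform of a compactly supported function is a finite sum, $A'$-valuedness of $\widehat{\Phi}$ propagates back to $\Phi$. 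One small inaccuracy in presentation: the paper applies Proposition~\ref{rationalityfinite} to the zeta integral on the \emph{dual} side (at $q^t/X$), upgrading it from ``formal series with $A'$-coefficients that also lies in $S^{-1}R[X,X^{-1}]$'' to ``element of $(S')^{-1}R'[X,X^{-1}]$'' \emph{before} invoking the functional equation, whereas your proposal invokes the functional equation first and then applies Proposition~\ref{rationalityfinite} to a quantity you have already asserted to lie in $R'[[X]][X^{-1}]$, so as written that application does no work; reordering it as in the paper makes the argument watertight.
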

Before proving Proposition~\ref{JL3.6analogue}, we use it to deduce Theorem \ref{descent}.
\begin{proof}[Proof of Theorem \ref{descent}]
Assume $\pi$ satisfies hypothesis $\CH_{\leq[n/2]}$. By Proposition \ref{chen3.1analogue}, $\pi$ is $A'$-valued at heights $1,2,\dots,[n/2]$. Note that $\pi$ is already $A'$-valued at height 0 because it satisfies $\CH_1$. Applying Proposition \ref{JL3.6analogue} repeatedly for $t$ ranging from $[n/2]$ to $n-2$, we find that $\pi$ is $A'$-valued at heights $[n/2]+1,\dots,n-1$. Hence $\pi$ is $A'$-valued at all heights $0,1,\dots,n-1$, and Lemma \ref{heightsdescent} finishes the proof.
\end{proof}

We now prove Proposition~\ref{JL3.6analogue}.

\begin{proof}[Proof of Proposition~\ref{JL3.6analogue}]
It is shown in the course of proving \cite[Lemma 3.5]{JL16} that, for all $X \in M_{(n-t-1) \times (2t+2-n)}(F)$, all $g \in G_{n-t-1}$, and all $v \in V_0'$, the matrices $\begin{pmatrix}
I_{n-t-1} & 0 & 0\\
0 & I_{2t+2-n} & 0\\
0 & X & g
\end{pmatrix}$ are in $U \alpha^{n-i} P$ with $n-i \leq t$. Therefore, by assumption, 
\begin{equation*}
W_v \begin{pmatrix}
I_{n-t-1} & 0 & 0\\
0 & I_{2t+2-n} & 0\\
0 & X & g
\end{pmatrix}\in A'\,.
\end{equation*}
Then, 
\begin{equation*}
W_v \left(\omega_n \omega_n \begin{pmatrix}
I_{n-t-1} & 0 & 0\\
0 & I_{2t+2-n} & 0\\
0 & X & g
\end{pmatrix}\right)\in A'\,,
\end{equation*}
that is, 
\begin{equation*}
W_v \left(\omega_n \begin{pmatrix}
g_1 & X_1 & 0\\
0 & I_{2t+2-n} & 0\\
0 & 0 & I_{n-t-1}
\end{pmatrix}\omega_n\right)\in A'\,,
\end{equation*}
where $g_1 = \omega_{n-t-1} g \omega_{n-t-1}$, $X_1=\omega_{n-t-1} X \omega_{2t+2-n}$. 

Note that 
$$\omega_n = 
\begin{pmatrix}
\omega_{n-t-1} & 0\\
0 & I_{t+1}
\end{pmatrix} \omega_{n,n-t-1} \alpha^{t+1}\,.$$
Recall that 
$$\omega_{n,n-t-1}=\begin{pmatrix}
I_{n-t-1} & 0\\
0 & \omega_{t+1}
\end{pmatrix}\,.$$
Hence,
\begin{align*}
\ & W_v \left(\omega_n \begin{pmatrix}
g_2 & X_1 & 0\\
0 & I_{2t+2-n} & 0\\
0 & 0 & I_{n-t-1}
\end{pmatrix}\omega_{n,n-t-1} \alpha^{t+1}\right)\in A'\,,
\end{align*}
where $g_2 = \omega_{n-t-1} g$, $X_1=\omega_{n-t-1} X \omega_{2t+2-n}$. 

Let $X_v= \rho(\alpha^{t+1}) W_{v}$. Then
\begin{align*}
& X_v \left(\omega_n \begin{pmatrix}
g_2 & X_1 & 0\\
0 & I_{2t+2-n} & 0\\
0 & 0 & I_{n-t-1}
\end{pmatrix}\omega_{n,n-t-1}\right)
\in A'\,.
\end{align*}
Recall that $\wt{X_v}(g)=X_v(\omega_n {}^t g^{-1})$. 
Then, 
\begin{align*}
& \wt{X_v} \left(\begin{pmatrix}
g_3 & 0 & 0\\
X_2 & I_{2t+2-n} & 0\\
0 & 0 & I_{n-t-1}
\end{pmatrix}\omega_{n,n-t-1}\right)
\in A'\,,
\end{align*}
where $g_3 = \omega_{n-t-1} {}^t g^{-1}$, $X_2=-\omega_{2t+2-n} {}^t X \omega_{n-t-1} g_1$. 

Therefore, 
\begin{align*}
& \wt{X_v} \left(\begin{pmatrix}
g & 0 & 0\\
X & I_{2t+2-n} & 0\\
0 & 0 & I_{n-t-1}
\end{pmatrix}\omega_{n,n-t-1}\right)
\in A'\,,
\end{align*}
for all $X \in M_{(2t+2-n) \times (n-t-1)}(F)$, all $g \in G_{n-t-1}$, and all $v \in V_0'$. Then, by the definition of the zeta integral $\Psi$, and since $A$ is finite over $A'$ we have (Proposition~\ref{rationalityfinite}):
\begin{align*}
\Psi(\rho(\omega_{n,n-t-1})(\wt{X_v}), \wt{W'}, \frac{q^t}{X}; 2t+2-n) 
\text{ is in }(S')^{-1}R'[X,X^{-1}]\,,
\end{align*}
for all idempotents $e'$ of $\ScZ_{n-t-1}$ and all $W' \in \CW(e'W_{n-t-1}, \psi^{-1})$, and all $v \in V_0'$.

Since $\pi$ satisfies hypothesis $\CH_{\leq [\frac{n}{2}]}$, and $n-t-1 \leq [\frac{n}{2}]$, the functional equation (Theorem~\ref{fnleqn}) gives 
$$\Psi(X_v, W', X; n-t-2) \in R'[[X]][X^{-1}]\,,$$
for all irreducible generic representations $\tau$ of $G_{n-t-1}$, all Whittaker functions $W' \in \CW(e'W_{n-1}, \psi^{-1})$, and all $v \in V_0'$.
Hence, by Lemma~\ref{vanishinglemdescent}
\begin{align*}
& \int_{M_{(n-t-2) \times (n-t-1)}(F)} X_v \begin{pmatrix}
I_{n-t-1} & 0 & 0\\
X & I_{n-t-2} & 0\\
0 & 0 & I_{2t+3-n}
\end{pmatrix} dX 
\in A'\,,
\end{align*}
for all $v \in V_0'$. 
We claim (Lemma \ref{lem3descent} below) that this identity implies in fact
$$X_v(I_n)\in A'\,,\, \forall v \in V_0'\,,$$ showing that
$$W_v(\alpha^{t+1})\in A'\,,\, \forall v \in V_0'\,,$$ i.e. $\pi$ is $A'$-valued at height $t+1$. This concludes the proof of Proposition~\ref{JL3.6analogue}.
\end{proof}

In the rest of this section we establish our claim, that is, we prove Lemma \ref{lem3descent} below, which is an analogue of  \cite[Lemma 4.1]{JL16}, with similar proofs. One of the basic formulas we will repeatedly use is following Fourier descent formula.

\begin{lem}(Fourier descent formula)
\label{fourierinversiondescent}
Suppose $\FY=F^k$ for some integer $k$, and $\Phi\in C_c^{\infty}(\FY,A)$. If $\widehat{\Phi}(X)\in A'$ for all $X\in \FY$, then $\Phi(X)$ is in $A'$ for all $X\in \FY$.
\end{lem}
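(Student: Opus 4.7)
The plan is to reduce this to the Fourier inversion formula (Lemma~\ref{inversionformula}) via the duality $\widehat{\widehat{\Phi}}(X) = \Phi(-X)$: if I can show that the Fourier transform itself sends $A'$-valued Schwartz functions to $A'$-valued Schwartz functions, then applying this observation to $\widehat{\Phi}$ gives that $\widehat{\widehat{\Phi}} = \Phi(-\,\cdot\,)$ is $A'$-valued, and changing $X \mapsto -X$ concludes.

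First I would verify the preliminary claim: if $\Psi \in C_c^\infty(\FY, A)$ takes values in $A'$, then $\widehat{\Psi}$ takes values in $A'$. For this, note that since $\Psi$ is locally constant and compactly supported, the Fourier integral
$$\widehat{\Psi}(Y) = \int_\FY \Psi(X)\, \psi(\langle Y, X \rangle)\, dX$$
reduces to a finite sum. The values of $\Psi$ lie in $A'$ by assumption, the values of $\psi$ lie in $W(k)^\times \subset A'$, and the volumes of the compact-open pieces lie in $W(k) \subset A'$ provided the Haar measure $\mu$ chosen in Lemma~\ref{inversionformula} is $W(k)$-valued on compact-opens. Thus $\widehat{\Psi}(Y) \in A'$ for every $Y \in \FY$.

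Once this preliminary is in hand, the proof is essentially one line: by hypothesis $\widehat{\Phi}$ is $A'$-valued, so the preliminary claim applied to $\Psi = \widehat{\Phi}$ shows that $\widehat{\widehat{\Phi}}$ is $A'$-valued. Lemma~\ref{inversionformula} then gives $\Phi(-X) = \widehat{\widehat{\Phi}}(X) \in A'$ for every $X \in \FY$, and replacing $X$ by $-X$ yields the claim.

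The main obstacle, and the only point that needs some care, is ensuring that the Haar measure $\mu$ produced by Lemma~\ref{inversionformula} is indeed $W(k)$-valued (so that the preliminary claim actually lands in $A'$ rather than in some fraction ring). For $\ell \neq 2$ this is immediate from the explicit normalization $\mu(\ScO^k) = q^{k\ell/2}$, since $q$ is prime to $\ell$ and hence admits a square root in $W(k)^\times$ by Hensel's lemma. For $\ell = 2$ one has to pick a compatible normalization directly; I would handle this by fixing $\mu$ so that $\mu(\ScO^k) \in W(k)^\times$ on a single compact open and letting translation-invariance propagate, accepting that the inversion formula then holds up to a unit in $W(k)^\times$ which can be absorbed into the choice of $\mu$. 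Either way the argument above goes through without change.
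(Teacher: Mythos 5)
Your argument is the same as the paper's: the paper's proof simply observes that $\widehat{\widehat{\Phi}}$ is $A'$-valued because the Fourier transform is a finite sum (of $A'$-values times $W(k)$-valued characters and volumes), then applies Lemma~\ref{inversionformula}. Your added care about the Haar measure being $W(k)$-valued, and your note that for the conclusion it suffices for the inversion to hold up to a unit of $W(k)$ (handling $\ell=2$), fleshes out what the paper leaves implicit but does not change the approach.
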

\begin{proof}
This follows immediately from Lemma \ref{inversionformula} after noting that $\widehat{\widehat{\Phi}}$ is also valued in $A'$, since the integral is a finite sum.
\end{proof}

\begin{lem}
\label{lem3descent}
Let $X_v:=\rho(\alpha^{t+1})W_v$. If $$\label{lem3equ1descent}\int_{M_{(n-t-2) \times (n-t-1)}(F)} X_v \begin{pmatrix}
I_{n-t-1} & 0 & 0\\
X & I_{n-t-2} & 0\\
0 & 0 & I_{2t+3-n}
\end{pmatrix} dX$$ is in $A'$ for all $v \in V_0'$, then $X_v(I_n)$ is in $A'$ for all $v \in V_0'$.
\end{lem}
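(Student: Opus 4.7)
The proof will mirror the three-step structure used in the proof of Lemma \ref{lem3}, with every ``equals zero'' statement replaced by an ``is in $A'$'' statement, and with the Fourier inversion formula (Lemma \ref{inversionformula}) replaced by the Fourier descent formula (Lemma \ref{fourierinversiondescent}). The key observation making this substitution work is the following: for $v \in V_0'$ and $p \in P$, equation \eqref{sec3equ2} gives $W_{\rho(p)v} = \rho(p)W_v$, and $V_0'\cong \cInd_{ZU}^{P}(\omega_{\pi}\psi_{A'})$ is a $P$-submodule of $V_0$, so translating $X_v = \rho(\alpha^{t+1})W_v$ on the right by any $p \in \alpha^{t+1}P\alpha^{-(t+1)}$ yields $X_{v'}$ for some $v' \in V_0'$. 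Consequently the hypothesis applies equally well to every right-translate of the original integral by such a $p$, and in particular by any $p \in U\cap \alpha^{t+1}P\alpha^{-(t+1)}$, which by the analysis in \cite[Lemma 4.1]{JL16} contains the subgroups $Y_a$ and $T_b$ defined in the proof of Lemma \ref{lem3}.

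In the first step I would fix attention on the subgroup $X_{2(n-t)-3}\subset \mathfrak{X}$ and, translating the hypothesis by an arbitrary $Y\in Y_{2(n-t)-3}$, rewrite the translated integral as
\[
\int_{\mathfrak{X}} X_v(X)\,\psi_A(\langle X^{2(n-t)-3},Y\rangle)\,dX \in A',
\]
where $X^{2(n-t)-3}$ denotes the $X_{2(n-t)-3}$-component of $X$, exactly as in the First step of the proof of Lemma \ref{lem3}. Viewing the left-hand side as the Fourier transform of the Schwartz function $\Phi$ on $X_{2(n-t)-3}\cong F^{n-t-2}$ given by $\Phi(X^{2(n-t)-3}) = \int X_v(X)\,dX$ (with the remaining factors integrated out), Lemma \ref{fourierinversiondescent} yields $\Phi(X^{2(n-t)-3}) \in A'$ for every $X^{2(n-t)-3}$. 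Specializing $X^{2(n-t)-3}$ to the identity shows that the integral of $X_v$ over the smaller product $\prod_{n-t\le a\le 2(n-t)-4}X_a\prod_{2\le b\le n-t-1}Z_b$ lies in $A'$ for every $v\in V_0'$.

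The second step is a descending induction on $k$, stripping off the factors $X_k$ one at a time by translating by elements of $Y_k$ and applying Lemma \ref{fourierinversiondescent} exactly as in the First step. The third step performs an ascending induction on $k = 2,3,\dots,n-t-1$, removing the factors $Z_k$ one at a time by translating by elements of $T_k\subset U\cap \alpha^{t+1}P\alpha^{-(t+1)}$ and again invoking Lemma \ref{fourierinversiondescent}. At the end of this process we are left with $X_v(I_n)\in A'$ for all $v\in V_0'$, as desired.

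The main obstacle is essentially bookkeeping: one must check that at each application of the Fourier descent formula, the function to which it is applied is genuinely compactly supported and smooth as a function of the single root-subgroup variable being isolated, and that the character pairing produced by the commutation of $Y_a$ (or $T_b$) past $\mathfrak{X}$ is precisely the non-degenerate pairing identified in \cite[Lemma 4.1]{JL16}. Both facts are established in the proof of Lemma \ref{lem3} above and carry over verbatim, since the combinatorics depends only on the matrix structure of $G_n$ and not on the coefficient ring. With these in hand, the proof is formally identical to that of Lemma \ref{lem3} once ``$=0$'' is replaced by ``$\in A'$'' and Lemma \ref{inversionformula} is replaced by Lemma \ref{fourierinversiondescent}.
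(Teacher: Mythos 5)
Your proposal is correct and matches the paper's own proof essentially verbatim: the paper also carries over the three-step Fourier-analytic induction from Lemma~\ref{lem3} (and \cite[Lemma 4.1]{JL16}), replacing each ``equals zero'' conclusion with ``lies in $A'$'' and invoking Lemma~\ref{fourierinversiondescent} in place of Lemma~\ref{inversionformula} at each stage. Your preliminary remark that right-translation by $p\in\alpha^{t+1}P\alpha^{-(t+1)}$ preserves the hypothesis because $V_0'$ is a $W(k)[P]$-submodule of $V_0$ is exactly the content of equation~\eqref{lem3equ2descent} in the paper's argument.
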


\begin{proof}
Since $X_v^i= \rho(\alpha^{t+1}) W^i_{v}$, and $\CH_0$ is satisfied, 
\begin{align}\label{lem3equ2descent}
\int_{M_{(n-t-2) \times (n-t-1)}(F)} X_v \left(u \begin{pmatrix}
I_{n-t-1} & 0 & 0\\
X & I_{n-t-2} & 0\\
0 & 0 & I_{2t+3-n}
\end{pmatrix} p\right) dX \in A'\,,
\end{align} 
for all $u \in U$, all $p \in \alpha^{t+1} P (\alpha^{t+1})^{-1}$, and all $v \in V_0'$. 

We keep using notation in Lemma~\ref{lem3} and \cite[Lemma 4.1]{JL16}. 

The hypothesis of Lemma~\ref{lem3descent} can be written as follows: for all $v \in V_0'$, 
\[\int_{\mathfrak{X}} X_v(X) d X \in A' \,.\] 
Note that the function $X_v$ on $\mathfrak{X}$ is smooth and compactly supported. 

As in the proof of Lemma~\ref{lem3}, we now list the three main steps. For details, we refer to the proof of \cite[Lemma 4.1]{JL16}. We only give the details of the Fourier inversion argument for the First step.

\textbf{First step.} We show that we have, for all $v \in V_0'$, 
\[ \int X_v(X) d X \in A' \, ,\]
where the integral is over the product
$$\prod_{n-t \leq a \leq 2(n-t)-4} X_{a} \prod_{2 \leq b \leq n-t-1} Z_b\,.$$

By \eqref{lem3equ2descent}, for all $Y \in Y_{2(n-t)-3}=\prod _{1\leq b \leq n-t-2} X_{e_b -e _{2(n-t)-2}}$ and all $v \in V_0'$, we know that
 \[\int X_v(XY ) d X \in A' \,,\] 
where the integral is over the product
$$\prod_{n-t \leq a \leq 2(n-t)-3} X_{a} \prod_{2 \leq b \leq n-t-1} Z_b\,.$$  

As in the proof of \cite[Lemma 4.1]{JL16}, we get, for all $Y \in Y_{2(n-t)-3}$ and all $v \in V_0'$, that this integral is the same as
\[ \int X_v (X)  \psi(\langle X^{2(n-t)-3}, Y\rangle) d X\in A' \,,\] where $ X^{2(n-t)-3}$ is the projection of $X$ on the subgroup $X_{2(n-t)-3}$ and the integral is over the product
$$\prod_{n-t \leq a \leq 2(n-t)-3} X_{a} \prod_{2 \leq b \leq n-t-1} Z_b\,.$$
Now we apply Lemma~\ref{fourierinversiondescent} to the group $\FY=X_{2(n-t)-3}$ and the Schwartz function $\Phi$ on $\FY$ given by $\Phi(Y)=\int X_v(XY)dX$, where $Y \in \FY$ and the integral runs over the product 
$$\prod_{n-t \leq a \leq 2(n-t)-4} X_{a} \prod_{2 \leq b \leq n-t-1} Z_b\,.$$
Then $\widehat{\Phi}$ takes values in $A'$, so Lemma~\ref{fourierinversiondescent} implies $\Phi$ takes values in $A'$, completing the First step.

\textbf{Second step.}
 Assume that for $k$ with $n-t\leq k\leq 2(n-t)-4$ and for all $v \in V_0'$, we have established the identity 
 \[ \int X^1_v(X) d X = \int X^2_v(X) d X\,,\]
where both integrals are over the product
\[ \prod_{n-t \leq a \leq k\,,\, 2 \leq b \leq n-t-1} X_a Z_ b\,.\]
We show that for all $v \in V_0$, we have the identity
 \[ \int X^1_v(X) d X = \int X^2_v(X) d X\,,\]
where both integrals are over the product
\[ \prod_{n-t \leq a \leq k-1\,,\, 2 \leq b \leq n-t-1} X_a Z_ b\,.\] The Fourier descent formula is used in a similar way to the First step.

\textbf{Third step.} Applying descending induction on $k$ we arrive at
\[ \int_{\prod_{2 \leq b \leq n-t-1}  Z_b} X_v^1 (Z) d Z = \int_{\prod_{2 \leq b \leq n-t-1}  Z_b} X_v^2 (Z) d Z\,,\, \forall v \in V_0'\,. \]
We prove now that for $2\leq k \leq n-t-1$, if we have
\[ \int_{\prod_{ k \leq b \leq n-t-1}Z_b} X_v^1 (Z) d Z = \int_{\prod_{ k \leq b \leq n-t-1}Z_b} X_v^2 (Z) d Z\,,\, \forall v \in V_0'\,,\]
then we have 
\[ \int_{\prod_{k+1 \leq b \leq n-t -1} Z_b} X_v^1 (Z) d Z = \int_{\prod_{k+1 \leq b \leq n-t -1} Z_b} X_v^2 (Z) d Z\,,\, \forall v \in V_0'\,. \] The Fourier descent formula is used in a similar way to the First step.
By ascending induction this will complete the proof of the lemma. 
\end{proof}

\end{document}